\newif\ifPDF
\newtheorem{theorem}{Theorem}[section]
\newtheorem{lemma}[theorem]{Lemma}
\newtheorem{definition}[theorem]{Definition}
\newtheorem{proposition}[theorem]{Proposition}
\newcommand{\dsum}{\displaystyle\sum}
\newcommand{\dint}{\displaystyle\int}
\newcommand{\eps}{\varepsilon}
\newcommand{\bbC}{\mathbb C}
\newcommand{\bbR}{\mathbb R} \newcommand{\bbS}{\mathbb S}
\newcommand{\bnu}{{\boldsymbol \nu}}
\newcommand{\brho}{\mbox{\boldmath{$\rho$}}}
\newcommand{\bxi}{\boldsymbol \xi}
\newcommand{\be}{\mathbf e}
 \newcommand{\bp}{\mathbf p}
 \newcommand{\bv}{\mathbf v} 
 \newcommand{\bx}{\mathbf x} 
\newcommand{\by}{\mathbf y}
 \newcommand{\bV}{\mathbf V}
\newcommand{\cA}{\mathcal A} \newcommand{\cB}{\mathcal B}
\newcommand{\cC}{\mathcal C}  
\newcommand{\cE}{\mathcal E} 
\newcommand{\cI}{\mathcal I} 
 \newcommand{\cL}{\mathcal L}
 \newcommand{\cP}{\mathcal P} 
 \newcommand{\cR}{\mathcal R}
\newcommand{\cS}{\mathcal S} 
\newcommand{\cW}{\mathcal W}
\newenvironment{keywords}
{\noindent{\bf Key words.}\small}{\par\vspace{1ex}}
\newenvironment{AMS}
{\noindent{\bf AMS subject classifications 2010.}\small}{\par}
\newcommand{\wt}{\widetilde}
\newcommand{\wh}{\widehat}
\newcommand{\bzeta}{\boldsymbol\zeta}
\newcommand{\fki}{\mathfrak{i}}
\title{Nonlinear quantitative photoacoustic tomography with two-photon absorption}
\author{
	Kui Ren\thanks{
		Department of Mathematics and ICES,
		University of Texas, Austin, TX 78712; 
		ren@math.utexas.edu .
	}
	\and 
	Rongting Zhang\thanks{
		Department of Mathematics, 
		University of Texas, Austin, TX 78712; 
		rzhang@math.utexas.edu .
	}
}
\begin{document}

\maketitle



\begin{abstract}
Two-photon photoacoustic tomography (TP-PAT) is a non-invasive optical molecular imaging modality that aims at inferring two-photon absorption property of heterogeneous media from photoacoustic measurements. In this work, we analyze an inverse problem in quantitative TP-PAT where we intend to reconstruct optical coefficients in a semilinear elliptic PDE, the mathematical model for the propagation of near infra-red photons in tissue-like optical media with two-photon absorption, from the internal absorbed energy data. We derive uniqueness and stability results on the reconstructions of single and multiple optical coefficients, and present some numerical reconstruction results based on synthetic data to complement the theoretical analysis.
\end{abstract}


\begin{keywords}
	Photoacoustic tomography (PAT), two-photon PAT (TP-PAT), two-photon absorption, hybrid inverse problems, semilinear diffusion equation, numerical reconstruction.
\end{keywords}


\begin{AMS}
	 35R30, 49N45, 65M32, 74J25.
\end{AMS}


\section{Introduction}
\label{SEC:Intro}

Two-photon photoacoustic tomography (TP-PAT)~\cite{LaLeChChSu-OE14,LaBoGrBuBe-OE13,TsSoOmNt-OL14,UrYiYaZh-JBO14,WiYoNoInChChPaSh-Optica14,YaNaTa-PPUIS10,YaNaTa-OE11,YaTa-PPUIS09,YeKo-OE10} is an imaging modality that aims at reconstructing optical properties of heterogeneous media using the photoacoustic effect resulted from two-photon absorption. Here by two-photon absorption we mean the phenomenon that an electron transfers to an excited state after simultaneously absorbing two photons whose total energy exceed the electronic energy band gap. The main motivation for developing two-photon PAT is that two-photon optical absorption can often be tuned to be associated with specific molecular signatures, such as in stimulated Raman photoacoustic microscopy, to achieve label-free molecular imaging. Therefore, TP-PAT can be used to visualize particular cellular functions and molecular processes inside biological tissues.

The principle of TP-PAT is the same as that of the regular PAT~\cite{Beard-IF11,CoLaBe-SPIE09,LiWa-PMB09,Wang-IEEE08}, except that the photoacoustic signals in TP-PAT are induced via two-photon absorption in addition to the usual single-photon absorption. In TP-PAT, we send near infra-red (NIR) photons into an optically absorbing and scattering medium, for instance a piece of biological tissue, $\Omega \subseteq\bbR^n$ ($n \ge 2$), where they diffuse. The density of the photons, denoted by $u(\bx)$, solves the following semilinear diffusion equation:
\begin{equation}\label{EQ:Diff TP}
	\begin{array}{rcll}
  	-\nabla\cdot \gamma(\bx) \nabla u(\bx) + \sigma(\bx) u(\bx) + \mu(\bx)|u|u(\bx) &=& 0, &\mbox{in}\ \ \Omega\\
       u(\bx) &=& g(\bx), &\mbox{on}\ \ \partial\Omega
	\end{array}
\end{equation}
where the function $g(\bx)$ models the incoming NIR photon source, the function $\gamma(\bx)$ is the diffusion coefficient of the medium, $\sigma(\bx)$ is the usual single-photon absorption coefficient of the medium, and $\mu(\bx)$ is the intrinsic two-photon absorption coefficient. The total two-photon absorption coefficient is given by the product $\mu(\bx) |u|$ where the absolute value operation is taken to ensure that the total two-photon absorption coefficient is non-negative, a property that needs to be preserved for the diffusion model~\eqref{EQ:Diff TP} to correctly reflect the physics.

The medium absorbs a portion of the incoming photons and heats up due to the absorbed energy. The heating then results in thermal expansion of the medium. The medium cools down after the photons exit. This cooling process results in contraction of the medium. The expansion-contraction of the medium generates ultrasound waves. The process is called the photoacoustic effect. The initial pressure field generated by the photoacoustic effect can be written as~\cite{FiScSc-PRE07}
\begin{equation}\label{EQ:Data Q TP}
	H(\bx) = \Gamma(\bx) \Big[\sigma(\bx) u(\bx) + \mu(\bx) |u| u(\bx)\Big], \qquad \bx\in \Omega.
\end{equation}
where $\Gamma$ is the Gr\"uneisen coefficient that describes the efficiency of the photoacoustic effect. 
This initial pressure field generated by single-photon and two-photon absorption processes evolves, in the form of ultrasound, according to the classical acoustic wave equation~\cite{BaUh-IP10,FiScSc-PRE07}.

The data we measure in TP-PAT are the ultrasound signals on the surface of the medium. From these measured data, we are interested in reconstructing information on the optical properties of the medium. The reconstruction is usually done in two steps. In the first step, we reconstruct the initial pressure field $H$ in~\eqref{EQ:Data Q TP} from measured data. This step is the same as that in a regular PAT, and has been studied extensively in the past decade; see, for instance, ~\cite{AmBrJuWa-LNM12,BuMaHaPa-PRE07,CoArBe-IP07,FiHaRa-SIAM07,HaScSc-M2AS05,Hristova-IP09,KiSc-SIAM13,KuKu-EJAM08,Nguyen-IPI09,StUh-IP09} and references therein. In the second step of TP-PAT, we attempt to reconstruct information on the optical coefficients, for instance, the two-photon absorption coefficient $\mu$, from the result of the first step inversion, i.e. the internal datum $H$ in~\eqref{EQ:Data Q TP}. This is called the quantitative step in the regular PAT~\cite{AmBoJuKa-SIAM10,BaRe-CM11,BaUh-IP10,CoArKoBe-AO06,GaOsZh-LNM12,LaCoZhBe-AO10,MaRe-CMS14,PuCoArKaTa-IP14,ReGaZh-SIAM13,ReZhZh-IP15,SaTaCoAr-IP13,Zemp-AO10}.

Due to the fact that the intrinsic two-photon absorption coefficient is very small, it is generally believed that events of two-photon absorption in biological tissues can only happen when the local photon density is sufficiently high (so that the total absorption $\mu|u|$ is large enough). In fact, the main difficulty in the development of TP-PAT is to be able to measure the ultrasound signal accurate enough such that the photoacoustic signal due to two-photon absorption is not completely buried by noise in the data. In recent years, many experimental research have been conducted where it is shown that the effect of two-photon absorption can be measured accurately; see, for instance, the study on the feasibility of TP-PAT on various liquid samples in ~\cite{YaNaTa-PPUIS10,YaNaTa-OE11,YaTa-PPUIS09} (solutions), ~\cite{LaLeChChSu-OE14,YaTa-PPUIS09} (suspensions) and~\cite{LaBoGrBuBe-OE13} (soft matter).

Despite various experimental study of TP-PAT, a thorough mathematical and numerical analysis of the inverse problems in the second step of TP-PAT is largely missing, not to mention efficient reconstruction algorithms. The objective of this study is therefore to pursue in these directions. In the rest of the paper, we first recall in Section~\ref{SEC:Model} some fundamental mathematical results on the properties of solutions to the semilinear diffusion equation~\eqref{EQ:Diff TP}. We then develop in Section~\ref{SEC:Absorption} the theory of reconstructing the absorption coefficients. In Section~\ref{SEC:MultiPara} we analyze the linearized problem of simultaneously reconstructing the absorption coefficients and the diffusion coefficient. Numerical simulations are provided in Section~\ref{SEC:Num} to validate the mathematical analysis and demonstrate the quality of the reconstructions. Concluding remarks are offered in Section~\ref{SEC:Concl}.

\section{The semilinear diffusion model}
\label{SEC:Model}

To prepare for the study of the inverse coefficient problems, we recall in this section some general results on the semilinear diffusion model~\eqref{EQ:Diff TP}. Thanks to the absolute value operator in the quadratic term $\mu|u| u$ in the equation, we can follow the standard theory of calculus of variation, as well as the theory of generalized solutions to elliptic equations in divergence form, to derive desired properties of the solution to the diffusion equation that we will need in the following sections. The results we collected here are mostly minor modifications/simplifications of classical results in~\cite{AmMa-Book07,BaSe-Book11,Evans-Book10,GiTr-Book00}. We refer interested readers to these references, and the references therein, for more technical details on these results.

We assume, in the rest of the paper, that the domain $\Omega$ is smooth and satisfies the usual exterior cone condition~\cite{GiTr-Book00}. We assume that all the coefficients involved are bounded in the sense that there exist positive constants $\theta\in\bbR$ and $\Theta\in\bbR$ such that
\begin{equation}\label{EQ:Coeff Assump A}
	0<\theta \leq \Gamma(\bx), \gamma(\bx), \sigma(\bx), \mu(\bx) \leq \Theta <\infty,\ \forall \bx\in\bar\Omega.
\end{equation}
Unless stated otherwise, we assume also that
\begin{equation}\label{EQ:Coeff Assump B}
	(\gamma, \sigma, \mu) \in [W^{1,2}(\bar\Omega)]^3,\quad\mbox{and},\quad \mbox{$g(x)$ is the restriction of a $\cC^3(\bar\Omega)$ function on $\partial\Omega$} .
\end{equation}
where $W^{1,2}(\Omega)$ denotes the usual Hilbert space of $L^2(\Omega)$ functions whose first weak derivative is also in $L^2(\Omega)$. Note that we used $W^{1,2}(\Omega)$ instead of $H^1(\Omega)$ to avoid confusion with the $H$ we used to denote the internal data in~\eqref{EQ:Data Q TP}.

Technically speaking, in some of the results we obtained below, we can relax part of the above assumptions. However, we will not address this issue at the moment. For convenience, we define the function $f(\bx,z)$ and the linear operator $\cL$,
\begin{equation}\label{EQ:F L}
	f(\bx,z) = \sigma(\bx) z + \mu(\bx) |z| z,\quad\mbox{and}\quad \cL u = -\nabla \cdot \gamma \nabla u.
\end{equation}
With our assumptions above, it is clear that $\cL$ is uniformly elliptic, and $f(\bx,z)$ is continuously differentiable with respect to $z$ on $\bar\Omega\times\bbR$. Moreover, $f_z(\bx,z):=\partial_z f(\bx,z)=\sigma(\bx)+2\mu(\bx)|z|\ge\theta>0$, $\forall z\in\bbR$.

We start by recalling the definition of weak solutions to the semilinear diffusion equation~\eqref{EQ:Diff TP}. We say that $u\in \cW\equiv\{w|w \in W^{1,2}(\Omega)\ \mbox{and}\ w_{|\partial\Omega} =g\}$ is a weak solution to~\eqref{EQ:Diff TP} if
\begin{equation*}
	\int_{\Omega} \gamma(\bx) \nabla u\cdot \nabla v + \sigma(\bx) u(\bx) v(\bx) + \mu(\bx) |u|u(\bx) v(\bx) d\bx =0,\ \ \forall v\in W_{0}^{1,2}(\Omega). 
\end{equation*}
We first summarize the results on existence, uniqueness and regularity of the solution to~\eqref{EQ:Diff TP} in the following lemma.
\begin{lemma}\label{thm:regularity}
Let $(\gamma, \sigma, \mu)$ satisfy~\eqref{EQ:Coeff Assump A}, and assume that $g\in \cC^{0}(\partial \Omega)$. Then there is a unique weak solution $u\in W^{1,2}(\Omega)$ such that $u\in \cC^{\alpha}(\Omega)\cap \cC^{0}(\bar{\Omega})$ for some $0<\alpha<1$. If we assume further that $(\gamma, \sigma, \mu)$ and $g$ satisfy~\eqref{EQ:Coeff Assump B}, then $u\in W^{3,2}(\Omega)\cap \cC^{0}(\bar{\Omega})$.
\end{lemma}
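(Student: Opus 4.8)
The plan is to exploit the single structural fact recorded after~\eqref{EQ:F L}, namely that $f_z(\bx,z)=\sigma(\bx)+2\mu(\bx)\abs{z}\ge\theta>0$, so that $z\mapsto f(\bx,z)$ is strictly increasing. This monotonicity is precisely what makes the problem variational and forces uniqueness. I would introduce the antiderivative $F(\bx,z)=\int_0^z f(\bx,s)\,ds=\tfrac12\sigma(\bx)z^2+\tfrac13\mu(\bx)\abs{z}^3$ and the energy
\[
	J[w]=\int_\Omega\Big(\tfrac12\gamma\abs{\nabla w}^2+F(\bx,w)\Big)\,d\bx ,\qquad w\in\cW .
\]
Because $\gamma\ge\theta>0$ and $F_{zz}=f_z\ge\theta>0$, the functional $J$ is strictly convex; the lower bound $F(\bx,z)\ge\tfrac12\theta z^2$ together with $\gamma\ge\theta$ and Poincar\'e's inequality makes $J$ coercive on the affine space $\cW$, and it is weakly lower semicontinuous. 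The direct method of the calculus of variations then yields a minimizer, strict convexity makes it unique, and its Euler--Lagrange equation is exactly the stated weak formulation. (Equivalently, one may phrase this through the Browder--Minty theory of strictly monotone coercive operators.) This settles existence and uniqueness in $W^{1,2}(\Omega)$.

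For the low-regularity conclusion I would first establish the a priori bound $\norm{u}_{L^\infty(\Omega)}\le\norm{g}_{L^\infty(\partial\Omega)}=:M$. Testing the weak form with $(u-M)^+\in W_0^{1,2}(\Omega)$ and using that $f(\bx,\cdot)$ is sign-preserving (so $f(\bx,u)(u-M)^+\ge0$ on $\{u>M\}$) forces $\nabla(u-M)^+\equiv0$, hence $(u-M)^+\equiv0$; the lower bound is symmetric. With $u\in L^\infty(\Omega)$ the zeroth-order term $f(\bx,u)$ is bounded, so $u$ solves a linear divergence-form equation $\cL u=-f(\bx,u)$ with bounded right-hand side and merely bounded measurable coefficient $\gamma$. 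De Giorgi--Nash--Moser theory then gives interior H\"older continuity $u\in\cC^{\alpha}(\Omega)$, and the exterior cone condition on $\partial\Omega$ supplies boundary barriers yielding $u\in\cC^0(\bar\Omega)$, as in~\cite{GiTr-Book00}.

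The higher-regularity statement under~\eqref{EQ:Coeff Assump B} I would reach by bootstrapping. After subtracting a $\cC^3$ extension of $g$ to homogenize the boundary condition, I would write the equation as $\cL u=-f(\bx,u)$ with the now-bounded, H\"older right-hand side; a first application of elliptic regularity would promote $u$ to $W^{2,2}(\Omega)$. I would then differentiate the equation in $x_k$ and estimate the resulting equation for $\partial_k u$, whose right-hand side pairs $\nabla\gamma,\nabla\sigma,\nabla\mu\in L^2$ against the bounded $u$ and against $\nabla u$, thereby gaining one further derivative to arrive at $u\in W^{3,2}(\Omega)$.

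The main obstacle is this last bootstrap, for two intertwined reasons. First, the coefficients are only $W^{1,2}$, which sits below the Lipschitz threshold that standard $W^{2,2}$ divergence-form estimates require: the products $\nabla\gamma\cdot\nabla u$ that appear after differentiating $\cL$ are \emph{a priori} only $L^1$, and closing the estimates forces one to lean on the $L^\infty$ bounds from~\eqref{EQ:Coeff Assump A} together with the dimension-dependent Sobolev embeddings (which upgrade $\nabla u\in L^2$ to higher integrability once $u\in W^{2,2}$) to recover $L^2$ control. Second, the two-photon nonlinearity $z\mapsto\abs{z}z$ is only $\cC^1$, with $f_{zz}=2\mu\,\sgn(z)$ merely bounded, so the equation cannot be differentiated twice in the classical sense; one must instead work with the weak identity $\partial_k\big(\abs{u}u\big)=2\abs{u}\,\partial_k u$ and control the emerging quadratic term $\sgn(u)\,\partial_j u\,\partial_k u$ in $L^2$ via the improved integrability $\nabla u\in L^4$. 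Managing the interplay between these two limitations -- sub-Lipschitz coefficients and a nonsmooth nonlinearity -- is where the technical care concentrates, and is presumably why the authors describe the result as a modification of the classical theory in~\cite{AmMa-Book07,BaSe-Book11,Evans-Book10,GiTr-Book00}.
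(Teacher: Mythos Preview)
Your proposal is correct and follows the same variational route as the paper for existence and uniqueness: the paper minimizes the identical functional $I[w]=\int_\Omega\bigl(\tfrac12\gamma|\nabla w|^2+\tfrac12\sigma w^2+\tfrac13\mu|w|w^2\bigr)\,d\bx$ over $\cW$, invoking strict convexity and polynomial growth conditions. The one place you diverge is the intermediate $\cC^\alpha$ step. The paper does not first secure an $L^\infty$ bound; instead it uses the Sobolev embedding $W^{1,2}(\Omega)\hookrightarrow L^q(\Omega)$ for some $q>n$ (which it notes is valid only for $n=2,3$), whence $f(\cdot,u)\in L^{q/2}(\Omega)$, and then cites the De~Giorgi--Nash--Moser estimate for divergence-form equations with right-hand side in $L^{q/2}$, $q/2>n/2$. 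Your truncation argument with $(u-M)^+$ reaches $u\in L^\infty$ directly and is dimension-free, a mild improvement. For the $W^{3,2}$ step the paper is terser than you: it simply observes that once $u\in\cC^0(\bar\Omega)$ and $(\gamma,\sigma,\mu)\in[W^{1,2}(\bar\Omega)]^3$ one has $f(\cdot,u)\in W^{1,2}(\Omega)$, and then invokes elliptic regularity for $-\nabla\cdot(\gamma\nabla u)=-f$ from \cite{Evans-Book10,GiTr-Book00} to conclude $u\in W^{3,2}(\Omega)$, without dwelling on the sub-Lipschitz coefficient or $\cC^1$-nonlinearity issues you flag; those concerns are legitimate but are absorbed into the cited references rather than worked out explicitly.
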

\begin{proof}
This result is scattered in a few places in~\cite{AmMa-Book07,BaSe-Book11} (for instance~\cite[Theorem 1.6.6]{BaSe-Book11}). We provide a sketch of proof here. For any function $w\in \cW$, we define the following functional associated with the diffusion equation~\eqref{EQ:Diff TP}:
\[
I[w] =\int_{\Omega} L(\bx,w, Dw)d\bx = \int_{\Omega} \left[\frac{1}{2}\gamma|\nabla w|^2 +\frac{1}{2} \sigma w^2 + \frac{1}{3}\mu |w|w^2\right] d\bx .
\]
It is straightforward to verify that $I[w]: \cW\to \bbR$ is strictly convex (thanks again to the absolute value in the third term) and differentiable on $\cW$ with
\[
I'[w]v = \int_{\Omega} \Big[\gamma(\bx)\nabla w\cdot\nabla v + \sigma(\bx) w v + \mu(\bx) |w|w v \Big] d\bx .
\]
We also verify that the function $L(\bx, z, \bp)$ satisfies the following growth conditions:
\begin{align*}
|L(\bx,z,\bp)| &\leq C(1+|z|^3+|\bp|^2),\\
|D_{z}L(\bx,z,\bp)| &\leq C(1+|z|^2),\\
|D_{\bp}L(\bx,z,\bp)| &\leq C(1+|\bp|),
\end{align*}
for all $\bx\in \Omega$, $z\in \bbR$ and $\bp\in \bbR^n$. It then follows from standard results in calculus of variations~\cite{AmMa-Book07,BaSe-Book11,Evans-Book10} that there exists a unique $u\in \cW$ satisfies
\begin{equation*}
I[u] =\min_{w\in\cW} I[w],
\end{equation*}
and $u$ is the unique weak solution of~\eqref{EQ:Diff TP}. By Sobolev embedding, when $n=2,3$, there exists $q>n$, such that $u \in L^{q}(\Omega)$. This then implies that $f(\bx, u) \in L^{q/2}(\Omega)$ with the assumption~\eqref{EQ:Coeff Assump A}. Let us rewrite the diffusion equation~\eqref{EQ:Diff TP} as
\begin{equation}\label{EQ:Diff TP New}
-\nabla\cdot(\gamma \nabla u) = f(\bx, u),\ \ \mbox{in}\ \Omega, \qquad 
u = g,\ \ \mbox{on}\ \partial \Omega.
\end{equation}
Following standard results in~\cite{Evans-Book10,GiTr-Book00}, we conclude that $f\in L^{q/2}(\Omega)$ implies $u\in\cC^{\alpha}(\Omega)$ for some $0<\alpha<1$, where $\alpha = \alpha(n,\Theta/\theta)$. Moreover, when $g\in \cC^{0}(\partial \Omega)$, $u\in \cC^{0}(\bar{\Omega})$. If we assume further that $(\gamma, \sigma, \mu)$ and $g$ satisfy~\eqref{EQ:Coeff Assump B}, then $f\in W^{1,2}$ thanks to the fact that $u\in\cC^0(\bar\Omega)$. Equation~\eqref{EQ:Diff TP New} then implies that $u\in W^{3,2}(\Omega)\cap \cC^{0}(\bar{\Omega})$~\cite{Evans-Book10,GiTr-Book00}.
\end{proof}

We now recall the following comparison principle for the solutions to the semilinear diffusion equation~\eqref{EQ:Diff TP}.
\begin{proposition}\label{PROP:Comparison}
	(i) Let $u,v\in W^{1,2}(\Omega)\cap \cC^{0}(\bar{\Omega})$ be functions such that $\cL u+f(\bx, u) \le 0$ and $\cL v + f(\bx, v) \ge 0$ in $\Omega$, and $u\leq v$ on $\partial \Omega$. Then $u\leq v$ in $\Omega$. (ii) If, in addition, $\Omega$ satisfies the exterior cone condition or $u,v\in W^{2,2}(\Omega)$, then either $u\equiv v$ or $u<v$.
\end{proposition}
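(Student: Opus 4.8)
The plan is to reduce both parts to statements about $w = u - v$ and to exploit the strict monotonicity $f_z(\bx,z) = \sigma + 2\mu\abs{z} \ge \theta > 0$, which supplies a favorably-signed zeroth-order term even though $\cL$ has none of its own.

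For part (i), I would write the two differential inequalities weakly: against any nonnegative $\phi \in W_0^{1,2}(\Omega)$, the subsolution hypothesis $\cL u + f(\bx,u) \le 0$ reads $\int_\Omega \gamma\nabla u\cdot\nabla\phi + f(\bx,u)\phi\,d\bx \le 0$ and the supersolution hypothesis gives the reversed inequality for $v$. Subtracting yields $\int_\Omega \gamma\nabla w\cdot\nabla\phi + (f(\bx,u)-f(\bx,v))\phi\,d\bx \le 0$. The decisive step is to take $\phi = w^+ = (u-v)^+$, which lies in $W_0^{1,2}(\Omega)$ precisely because $u \le v$ on $\partial\Omega$ makes its trace vanish; with $\nabla w^+ = \mathbf{1}_{\{w>0\}}\nabla w$ this becomes $\int_{\{u>v\}} \gamma\abs{\nabla w}^2 + (f(\bx,u)-f(\bx,v))\,w\,d\bx \le 0$. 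On $\{u>v\}$ both integrands are nonnegative, the first since $\gamma \ge \theta$ and the second since $(f(\bx,u)-f(\bx,v))(u-v) \ge \theta(u-v)^2$ by monotonicity. Hence each vanishes; in particular $\int_{\{u>v\}} \theta w^2\,d\bx \le 0$ forces $\abs{\{u>v\}} = 0$, and continuity of $u,v$ upgrades this to $u \le v$ throughout $\Omega$.

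For part (ii), knowing already that $w = v - u \ge 0$, I would linearize: setting $c(\bx) = (f(\bx,v)-f(\bx,u))/(v-u)$ where $u \ne v$ and $c(\bx) = f_z(\bx,u)$ otherwise, the mean value theorem produces a bounded coefficient with $\theta \le c(\bx) \le C$, and $w$ satisfies $-\nabla\cdot(\gamma\nabla w) + c(\bx)w \ge 0$ weakly with $c \ge 0$. The conclusion is then the strong maximum principle for divergence-form operators: a nonnegative weak supersolution that touches the value $0$ at an interior point must vanish identically, so either $w \equiv 0$ (that is $u \equiv v$) or $w > 0$ in all of $\Omega$ (that is $u < v$). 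This is exactly where the additional hypothesis is used — the exterior cone condition, or $u,v \in W^{2,2}(\Omega)$ so that the equation holds almost everywhere and Hopf's lemma together with the Harnack inequality become available.

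I expect the main obstacle to lie in the rigorous justifications surrounding these two otherwise-standard moves: in part (i), confirming that $w^+$ is an admissible test function and that the chain rule $\nabla w^+ = \mathbf{1}_{\{w>0\}}\nabla w$ is valid in $W^{1,2}(\Omega)$; and in part (ii), checking that the stated regularity is enough to invoke the strong maximum principle in the weak setting. The algebraic core — that strict monotonicity of $f(\bx,\cdot)$ furnishes the missing positive zeroth-order term — is what lets the comparison go through without any sign condition on the operator $\cL$.
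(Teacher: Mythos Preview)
Your proposal is correct. The paper packages the argument slightly differently: it linearizes at the outset by writing $f(\bx,u) - f(\bx,v) = a(\bx)(u-v)$ with $a(\bx) = \int_0^1 f_z(\bx, tu+(1-t)v)\,dt \in [\theta, \Lambda]$, so that $w = u-v$ satisfies the linear inequality $\cL w + a(\bx) w \le 0$ with $w\le 0$ on $\partial\Omega$, and then simply cites the weak maximum principle for divergence-form operators (Gilbarg--Trudinger, Theorem~8.1) rather than rerunning the $w^+$ test-function argument. For part~(ii) the two approaches coincide --- linearize and invoke the strong maximum principle (the paper points to Theorem~8.19 of Gilbarg--Trudinger for the exterior-cone case) --- though the integral formula for $a(\bx)$ is a bit cleaner than your case-split quotient, as it sidesteps any measurability worry on the set $\{u=v\}$. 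What your route buys is self-containment (no black-box citation for part~(i)); what the paper's buys is brevity and a single bounded linearized coefficient that serves both parts uniformly.
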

\begin{proof}
For $t\in[0, 1]$, let $u_t = tu + (1-t)v$ and define $a(\bx)=\dint_{0}^{1} f_{z}(u_t,\bx) dt$. It is then straightforward to check that $a(\bx)\ge \theta> 0$ (since $f_z\ge \theta>0$). With the assumption that $u\in \cC^{0}(\bar{\Omega})$ and $v\in \cC^{0}(\bar{\Omega})$, we conclude that $u_t$ is bounded from above when $t\in[0, 1]$. Therefore, $a(\bx)\le \Lambda<\infty$ for some $\Lambda>0$. We also verify that $f(u,\bx)- f(v,\bx) =a(\bx)(u-v)$. Let $w=u-v$, we have, from the assumptions in the proposition, that
\begin{equation*}
	\cL w + a(\bx) w \le 0, \quad \mbox{in}\ \Omega, \qquad w \le 0,\quad \mbox{on}\ \partial\Omega.
\end{equation*}
Since $\cL+a$ is uniformly elliptic, by the weak maximum principle for weak solutions~\cite[Theorem 8.1]{GiTr-Book00}, $w\le 0$ in $\Omega$. This then implies that $u\leq v$ in $\Omega$.

If we assume in addition that $u,v\in W^{2,2}(\Omega)$, we can use the strong maximum principle to conclude that $w\equiv 0$ if $w(0)=0$ for some $x\in\Omega$. Therefore, either $w\equiv 0$, in which case $u=v$, or $w<0$, in which case $u<v$. If $u, v \in W^{1,2}(\Omega)$ and $\Omega$ satisfies the exterior cone condition, we can use~\cite[Theorem 8.19]{GiTr-Book00} to draw the same conclusion.
\end{proof}

The above comparison principle leads to the following assertion on the solution to the semilinear diffusion equation~\eqref{EQ:Diff TP}.
\begin{proposition}\label{PROP:Maximum}
	Let $u_j$ be the solution to~\eqref{EQ:Diff TP} with boundary condition $g_j$, $j=1,2$. Assume that $\gamma$, $\sigma$, $\mu$ and $\{g_j\}_{j=1}^2$ satisfy the assumptions in~\eqref{EQ:Coeff Assump A} and~\eqref{EQ:Coeff Assump B}. Then the following statements hold: (i) if $g_j\ge 0$, then $u_j\ge 0$; (ii) $\sup_{\Omega} u_j\le \sup_{\partial\Omega} g_j$; (iii) if $g_1> g_2$, then $u_1(\bx)>u_2(\bx)$ $\forall \bx\in\Omega$.
\end{proposition}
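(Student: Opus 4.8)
My plan is to derive all three assertions directly from the comparison principle established in Proposition~\ref{PROP:Comparison}, feeding it suitably chosen \emph{constant} barriers. Two structural features of $f(\bx,z)=\sigma z+\mu|z|z$ make this work. First, $f(\bx,0)=0$, so the constant function $0$ satisfies $\cL 0+f(\bx,0)=0$ and thus serves simultaneously as a sub- and a supersolution. Second, $f_z=\sigma+2\mu|z|\ge\theta>0$, so $f(\bx,\cdot)$ is strictly increasing with $f(\bx,c)\ge 0$ for every constant $c\ge 0$. The regularity $u_j\in W^{3,2}(\Omega)\subset W^{2,2}(\Omega)$ furnished by Lemma~\ref{thm:regularity} will let me use the strong form of the comparison principle when strictness is needed.

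For (i) I would apply Proposition~\ref{PROP:Comparison}(i) with subsolution $0$ and supersolution $u_j$: since $\cL 0+f(\bx,0)=0\le 0$, $\cL u_j+f(\bx,u_j)=0\ge 0$, and $0\le g_j$ on $\partial\Omega$, the principle yields $0\le u_j$ in $\Omega$. For (ii) I set $M=\sup_{\partial\Omega}g_j$ and compare $u_j$ against the constant $M$: one has $\cL M=0$ and $f(\bx,M)=\sigma M+\mu M|M|\ge 0$ (using $M\ge 0$), so $M$ is a supersolution, while $u_j=g_j\le M$ on $\partial\Omega$; Proposition~\ref{PROP:Comparison}(i) then gives $u_j\le M$, that is $\sup_\Omega u_j\le\sup_{\partial\Omega}g_j$.

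For (iii) I take $u=u_2$, $v=u_1$. Both solve~\eqref{EQ:Diff TP}, so $\cL u_2+f(\bx,u_2)=0\le 0$ and $\cL u_1+f(\bx,u_1)=0\ge 0$, while $g_2<g_1$ gives $u_2\le u_1$ on $\partial\Omega$. Proposition~\ref{PROP:Comparison}(i) first produces the non-strict bound $u_2\le u_1$ in $\Omega$. To sharpen it I invoke Proposition~\ref{PROP:Comparison}(ii), whose hypothesis $u_1,u_2\in W^{2,2}(\Omega)$ is guaranteed by Lemma~\ref{thm:regularity}; the resulting dichotomy says either $u_1\equiv u_2$ or $u_2<u_1$ throughout $\Omega$. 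The first alternative is impossible, since it would force $g_1=u_1|_{\partial\Omega}=u_2|_{\partial\Omega}=g_2$, contradicting $g_1>g_2$. Hence $u_1>u_2$ everywhere in $\Omega$.

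The sub/supersolution verifications are routine; the one genuinely delicate point is the choice of barrier in (ii), which is a supersolution precisely because $M=\sup_{\partial\Omega}g_j\ge 0$ makes $f(\bx,M)\ge 0$. I would emphasize that this nonnegativity is essential and is exactly the physically relevant regime $g_j\ge 0$: for $M<0$ the odd, strictly increasing structure of $f$ turns the constant $M$ into a \emph{sub}solution rather than a supersolution, and the bound can genuinely fail (a concave interior profile overshoots $M$). The only care needed in (iii) is checking that the regularity prerequisite of the strong-maximum-principle branch of Proposition~\ref{PROP:Comparison}(ii) is in force, which Lemma~\ref{thm:regularity} supplies.
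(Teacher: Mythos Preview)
Your argument is correct. Parts (i) and (iii) match the paper's proof essentially verbatim: both invoke Proposition~\ref{PROP:Comparison} with the zero function (for (i)) and the strong form in part (ii) of that proposition (for (iii)), and your explicit elimination of the alternative $u_1\equiv u_2$ via the boundary trace is exactly what the paper's phrase ``direct consequence'' hides.

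The only place you deviate is (ii). The paper first uses (i) to get $u_j\ge 0$, then observes $-\nabla\cdot(\gamma\nabla u_j)=-f(\bx,u_j)\le 0$ and appeals to the \emph{linear} weak maximum principle for subsolutions of the pure diffusion operator. You instead stay inside the semilinear comparison framework, taking the constant $M=\sup_{\partial\Omega}g_j$ as a supersolution of $\cL+f$. Both routes are standard and both, as you correctly flag, require the nonnegativity coming from $g_j\ge 0$ (the paper needs it to invoke (i); you need $M\ge 0$ so that $f(\bx,M)\ge 0$). Your variant has the mild advantage of using only the one comparison tool already set up in Proposition~\ref{PROP:Comparison}, while the paper's version makes the structural reason---$u_j$ is a subsolution of the linear operator---slightly more transparent.
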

\begin{proof}
(i) follows from the comparison principle in Proposition~\ref{PROP:Comparison} and the fact that $u\equiv 0$ is a solution to~\eqref{EQ:Diff TP} with homogeneous Dirichlet condition $g=0$. (ii) By (i), $u_j\ge 0$. Therefore $f(\bx,u_j)\ge 0$. Therefore, we can have
\begin{equation*}
	-\nabla\cdot(\gamma \nabla u_j) = - f(\bx,u_j)\le 0,\quad \text{in}\ \Omega.
\end{equation*}
By the maximum principle, $\sup_\Omega u_j \le \sup_{\partial\Omega} g_j$. (iii) is a direct consequence of part (ii) of Proposition~\ref{PROP:Comparison}.
\end{proof}

In the study of the inverse problems in the next sections, we sometimes need the solution to the semilinear diffusion equation to be bounded away from $0$. We now prove the following result.
\begin{theorem}\label{THM:Positivity}
	Let $u$ be the solution to~\eqref{EQ:Diff TP} generated with source $g\ge\eps>0$ for some $\eps$. Then there exists $\eps'>0$ such that $u\ge \eps'>0$.
\end{theorem}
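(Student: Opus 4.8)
The plan is to reduce the semilinear equation to a \emph{linear} elliptic equation with a nonnegative, bounded zeroth-order coefficient, and then invoke a strong minimum principle. First I would record that, since $g\ge\eps>0$ in particular satisfies $g\ge 0$, Proposition~\ref{PROP:Maximum}(i) gives $u\ge 0$ in $\Omega$, while Proposition~\ref{PROP:Maximum}(ii) gives the upper bound $u\le\sup_{\partial\Omega}g<\infty$; by Lemma~\ref{thm:regularity} we also know $u\in\cC^0(\bar\Omega)$. Because $u\ge 0$ we may drop the absolute value and write $|u|u=u^2$, so that $u$ solves the linear equation
\begin{equation*}
\cL u + a(\bx)\,u = 0\quad\text{in }\Omega,\qquad u=g\quad\text{on }\partial\Omega,
\end{equation*}
where $a(\bx):=\sigma(\bx)+\mu(\bx)u(\bx)$. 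The point of this rewriting is that $a(\bx)\ge\sigma(\bx)\ge\theta>0$ (since $\mu u\ge 0$), while the upper bound on $u$ together with~\eqref{EQ:Coeff Assump A} gives $a\in L^\infty(\Omega)$; hence $\cL+a$ is a uniformly elliptic operator with bounded coefficients and a \emph{nonnegative} zeroth-order term.

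Next I would argue by contradiction using the strong maximum principle. Since $\bar\Omega$ is compact and $u$ is continuous, $u$ attains its minimum over $\bar\Omega$; on $\partial\Omega$ we have $u=g\ge\eps>0$. If that minimum were equal to $0$, it would have to be attained at some interior point $\bx_0\in\Omega$ with $u(\bx_0)=0$. But $u$ is a weak solution of $(\cL+a)u=0$ with $a\ge 0$, and it attains the nonpositive value $0$ at the interior point $\bx_0$, so the strong maximum principle for weak solutions (the same tool used in Proposition~\ref{PROP:Comparison}, e.g.~\cite[Theorem 8.19]{GiTr-Book00}) forces $u\equiv 0$ on the connected set $\Omega$, and hence on $\bar\Omega$ by continuity. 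This contradicts $u=g\ge\eps>0$ on $\partial\Omega$. Therefore $\min_{\bar\Omega}u>0$, and setting $\eps':=\min_{\bar\Omega}u$ completes the argument.

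The main obstacle, and the only genuinely delicate point, is the sign of the zeroth-order (absorption) term. One might hope to bound $u$ from below by a positive constant barrier, but constants are in fact \emph{super}solutions, since $\cL c+f(\bx,c)=\sigma c+\mu c^2>0$; consequently the comparison principle of Proposition~\ref{PROP:Comparison} only yields the upper bound $u\le\sup_{\partial\Omega}g$ and not a positive lower bound, and equivalently the absorption makes $u$ subharmonic, which controls its maximum rather than its minimum. The correct mechanism is the strong minimum principle applied to the linearized equation, whose hypothesis is met precisely because the \emph{only} way positivity can fail, namely an interior zero of the nonnegative solution $u$, is exactly a nonpositive interior minimum and therefore triggers the constancy conclusion. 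A secondary technical point is regularity: $u\in W^{3,2}(\Omega)$ embeds into $\cC^{1,\alpha}$ but not necessarily $\cC^2$ when $n=3$, so I would use the $W^{1,2}$ (weak-solution) form of the strong maximum principle, together with the standing exterior-cone and connectedness assumptions on $\Omega$, rather than the classical $\cC^2$ version.
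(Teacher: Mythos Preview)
Your argument is correct, and it takes a genuinely different route from the paper's. Both proofs begin the same way: use Proposition~\ref{PROP:Maximum} to get $0\le u\le\sup_{\partial\Omega}g$, then freeze $u$ in the nonlinearity to view $u$ as a solution of the \emph{linear} equation $-\nabla\cdot\gamma\nabla u+c(\bx)u=0$ with bounded nonnegative zeroth-order coefficient $c=\sigma+\mu|u|$. From there the approaches diverge. The paper proceeds quantitatively: it uses a gradient (Lipschitz) estimate to secure $u\ge\eps/2$ in a boundary collar $\Omega\setminus\Omega_d$, and then the interior Harnack inequality to push a positive lower bound from $\partial\Omega_{d/2}$ into $\Omega_{d/2}$. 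You instead argue qualitatively: continuity on $\bar\Omega$ plus compactness give an attained minimum, and if that minimum were $0$ it would be an interior nonpositive minimum of a solution to $(\cL+c)u=0$ with $c\ge 0$, so the strong minimum principle (the same \cite[Theorem 8.19]{GiTr-Book00} already invoked in Proposition~\ref{PROP:Comparison}) forces $u\equiv 0$, contradicting $u|_{\partial\Omega}=g\ge\eps$. Your route is shorter and uses only tools already established in the paper; the paper's route has the advantage of being more constructive, since the Harnack constant and the collar width make the dependence of $\eps'$ on $\eps$, the coefficients, and $\Omega$ in principle trackable. Your remark that the weak-solution version of the strong maximum principle is needed (because $u\in W^{3,2}$ need not be $\cC^2$ when $n=3$) is well taken.
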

\begin{proof}
We follow the arguments in~\cite{AlDiFrVe-AdM17}. We again rewrite the PDE as
\[
	-\nabla\cdot\gamma\nabla u=-f(\bx, u),\quad \mbox{in}\ \Omega, \qquad u=g,\quad \mbox{on}\ \partial\Omega.
\]
Then by classical gradient estimates, see for instance~\cite[Proposition 2.20]{HaLi-Book97}, we know that there exists $K>0$, depending on $\gamma$, $|\nabla \gamma|$ and $\Omega$, such that
\[
	|u(\bx)-u(\bx_0)| \le K|\bx-\bx_0|,\quad \forall \bx\in\Omega,\quad \bx_0\in\partial\Omega.	
\]
Using the fact that $g\ge \eps$, we conclude from this inequality that there exists a $d>0$ such that 
\[
u(\bx)\ge \eps/2,\qquad \forall \bx\in\Omega\backslash\Omega_d,
\]
where $\Omega_d=\{\bx\in\Omega:\ {\rm dist}(\bx,\partial\Omega)>d\}$. Therefore, $\sup_{\Omega_{d/2}} u\ge \eps/2$.

Let $c(\bx)=\sigma(\bx)+\mu(\bx)|u(\bx)|$. Due to the fact that $u$ is nonnegative and bounded from above, we have that $0<\theta \le c(\bx)\le\Theta(1+\sup_{\partial\Omega}|g|)$. We then have that $u$ solves
\[
-\nabla\cdot\gamma\nabla u+c u =  0,\quad \mbox{in}\ \Omega, \qquad u=g,\quad \mbox{on}\ \partial\Omega.
\]
By the Harnack inequality (see~\cite[Corollary 8.21]{GiTr-Book00}), we have that there exists a constant $C$, depending on $d$, $\gamma$, $c$, $\Omega$, and $\Omega_{d/2}$, such that
\[
	C \inf_{\Omega_{d/2}} u \ge \sup_{\Omega_{d/2}} u.
\]
Therefore, $\inf_{\Omega_{d/2}} u\ge \dfrac{\eps}{2C}$. The claim then follows from $\inf_{\Omega} u\ge \min\{\inf_{\Omega_{d/2}} u, \inf_{\Omega \backslash\Omega_d} u \} \ge \dfrac{\eps}{2} \min\{1/C, 1\}\equiv \eps'$.
\end{proof}

We conclude this section by the following result on the differentiability of the datum $H$ with respect to the coefficients in the diffusion equation. This result justifies the linearization that we perform in Section~\ref{SEC:MultiPara}.
\begin{proposition}\label{THM:Differentiability}
The datum $H$ defined in~\eqref{EQ:Data Q TP} generated from an illumination $g\ge 0$ on $\partial \Omega$, viewed as the map
\begin{equation}
H[\gamma, \sigma,\mu]:
\begin{matrix}
(\gamma, \sigma,\mu) & \mapsto & \Gamma (\sigma u + \mu |u|u )\\
W^{1,2}(\Omega) \times L^{\infty}(\Omega)\times L^{\infty}(\Omega) & \to & W^{1,2}(\Omega)
\end{matrix} 
\end{equation}
is Fr\'{e}chet differentiable when the coefficients satisfies~\eqref{EQ:Coeff Assump A} and ~\eqref{EQ:Coeff Assump B}. The derivative at $(\gamma, \sigma, \mu)$ in the direction $(\delta\gamma, \delta\sigma, \delta\mu)\in W^{1,2}(\Omega) \times L^{\infty}(\Omega)\times L^{\infty}(\Omega)$ is given by
\begin{equation}
\begin{pmatrix}
H_\gamma'[\gamma,\sigma,\mu](\delta \gamma)\\
H_\sigma'[\gamma,\sigma,\mu](\delta \sigma)\\
H_\mu'[\gamma,\sigma,\mu](\delta \mu) 
\end{pmatrix} 
=\Gamma
\begin{pmatrix} 
\sigma v_1+ 2\mu u v_1\\
\delta \sigma u + 2\mu|u| v_2 \\
\delta \sigma v_3+ 2\mu |u| v_3+ \delta\mu |u|u 
\end{pmatrix},
\end{equation}
where $v_j$ ($1\le j\le 3$) is the solution to the diffusion equation
\begin{equation}\label{EQ:Diff Deri}
-\nabla\cdot(\gamma \nabla v_j) + (\sigma + 2\mu |u|) v_j = S_j,\quad 
\mbox{in}\  \Omega, \qquad v_j = 0,\quad \mbox{on}\ \partial\Omega
\end{equation}
with
\[
	S_1=\nabla\cdot\delta\gamma\nabla u,\qquad S_2=-\delta \sigma u,\qquad S_3=-\delta \mu |u|u.
\]
\end{proposition}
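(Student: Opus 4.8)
The plan is to reduce the Fr\'echet differentiability of the data map $H$ to that of the parameter-to-solution map $\mathbf{c}\mapsto u$, where $\mathbf{c}=(\gamma,\sigma,\mu)$, and then to obtain the formula for $H'$ by the chain rule applied to $H=\Gamma f(\bx,u)=\Gamma(\sigma u+\mu|u|u)$. I would establish differentiability of the solution map through the implicit function theorem in Banach spaces. To set this up, I first fix a $\cC^3$ extension $\tilde g$ of $g$ into $\bar\Omega$ (which exists by~\eqref{EQ:Coeff Assump B}), write $u=\tilde g+u_0$ with $u_0\in W_0^{1,2}(\Omega)$ so that the boundary condition becomes homogeneous and the unknown lives in a linear space, and view the weak form of~\eqref{EQ:Diff TP} as the zero set of the nonlinear map $F(\mathbf{c},u)=\cL u+f(\bx,u)$, acting from the coefficients (restricted to those satisfying the standing assumptions~\eqref{EQ:Coeff Assump A}--\eqref{EQ:Coeff Assump B}) together with $u_0$ into $W^{-1,2}(\Omega)$, the dual of $W_0^{1,2}(\Omega)$.

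The two ingredients the implicit function theorem requires are the joint $\cC^1$ regularity of $F$ and the invertibility of the partial derivative $\partial_u F$ at a solution. The map $F$ is affine in $\mathbf{c}$, and its only nonlinear dependence is through the superposition term $u\mapsto\mu|u|u$; since $z\mapsto|z|z$ is globally $\cC^1$ on $\bbR$ with derivative $2|z|$ (as recorded after~\eqref{EQ:F L}), and since every solution $u$ is bounded by Lemma~\ref{thm:regularity}, this term is differentiable on the relevant bounded range with derivative $w\mapsto 2\mu|u|w$. Hence $\partial_u F\cdot w=\cL w+f_z(\bx,u)w=\cL w+(\sigma+2\mu|u|)w$. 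Because $\cL$ is uniformly elliptic and $f_z=\sigma+2\mu|u|\ge\theta>0$, the bilinear form associated with $\partial_u F$ is bounded and coercive on $W_0^{1,2}(\Omega)$, so by Lax--Milgram $\partial_u F$ is an isomorphism onto $W^{-1,2}(\Omega)$. The implicit function theorem then gives that $\mathbf{c}\mapsto u$ is Fr\'echet differentiable, and differentiating the identity $F(\mathbf{c},u)=0$ in each of the three coordinate directions produces exactly the linearized problem~\eqref{EQ:Diff Deri} with sources $S_1,S_2,S_3$, where $v_j$ is the directional derivative of $u$ (and $v_j=0$ on $\partial\Omega$ because $g$ is held fixed).

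With the derivative of $u$ in hand, the formula for $H'$ follows by the chain rule applied to $H=\Gamma f(\bx,u)$, which depends on $\mathbf{c}$ both explicitly (through $\sigma,\mu$) and implicitly (through $u$). In the $\gamma$ direction only the implicit dependence contributes, giving $\Gamma f_z(\bx,u)v_1$; in the $\sigma$ and $\mu$ directions one adds the explicit partials $\Gamma(\delta\sigma)u$ and $\Gamma(\delta\mu)|u|u$ to the implicit term $\Gamma f_z(\bx,u)v_j$. Since the illumination satisfies $g\ge 0$, Proposition~\ref{PROP:Maximum} yields $u\ge 0$, so $|u|=u$ and $f_z(\bx,u)=\sigma+2\mu u$, which is the form in which the stated components are written. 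To confirm that $H'$ lands in $W^{1,2}(\Omega)$, I would use that $u\in W^{3,2}(\Omega)\cap\cC^0(\bar\Omega)$ and each $v_j\in W^{1,2}(\Omega)$ are bounded, so every product appearing in $H'$ is a product of bounded $W^{1,2}$ functions (with $\Gamma,\sigma,\mu\in W^{1,2}\cap L^\infty$), hence lies in $W^{1,2}(\Omega)$.

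The step I expect to be the main obstacle is justifying the joint $\cC^1$ regularity of the superposition term, because $z\mapsto|z|z$ is only $\cC^{1,1}$ rather than $\cC^2$: its derivative $2|z|$ is Lipschitz but not differentiable at the origin. This is nevertheless enough for first-order differentiability, and the cleanest rigorous route---should the abstract Nemytskii-operator argument feel delicate---is to bypass the implicit function theorem and estimate directly. Writing $u^\delta$ for the solution at $\mathbf{c}+\delta$ and $r=u^\delta-u-(v_1+v_2+v_3)$ for the difference between the true increment and its proposed linearization, one derives the elliptic equation solved by $r$ and controls the nonlinear remainder $\mu(|u^\delta|u^\delta-|u|u-2|u|(u^\delta-u))$, which is $O(|u^\delta-u|^2)$ precisely because the derivative of $|z|z$ is Lipschitz. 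Combined with the continuity estimate $\|u^\delta-u\|_{W^{1,2}}=O(\|\delta\|)$ and an energy estimate for the coercive operator $\cL+f_z(\bx,u)$, this gives $\|r\|_{W^{1,2}}=o(\|\delta\|)$, establishing Fr\'echet differentiability with the claimed derivative.
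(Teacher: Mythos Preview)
Your proposal is correct. The direct-estimate fallback you outline at the end is precisely the paper's argument: the paper sets $\tilde u=u'-u$ and then $\tilde{\tilde u}=u'-u-(v_1+v_2+v_3)$, derives the linear elliptic equations each satisfies (using $u,u'\ge 0$ from Proposition~\ref{PROP:Maximum} to replace $|u|,|u'|$ by $u,u'$), and chains the two energy estimates to obtain $\|\tilde{\tilde u}\|_{W^{1,2}}\le C(\|\delta\gamma\|_{L^\infty}^2+\|\delta\sigma\|_{L^\infty}^2+\|\delta\mu\|_{L^\infty}^2)$. The paper likewise defers the passage from $u$ to $H$ to the chain rule without further elaboration.

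Your primary route via the implicit function theorem is a different, more conceptual packaging: it delivers differentiability of $\mathbf c\mapsto u$ and the linearized equation~\eqref{EQ:Diff Deri} in one stroke, at the price of verifying that the Nemytskii map $u\mapsto\mu|u|u$ is $\cC^1$ between the chosen spaces. As you correctly flag, the Lipschitz continuity of $z\mapsto 2|z|$ yields the pointwise remainder bound $|\,|u^\delta|u^\delta-|u|u-2|u|(u^\delta-u)\,|\le C|u^\delta-u|^2$, and this is exactly the analytic fact on which the paper's direct estimate rests. So the two approaches ultimately hinge on the same ingredient; the IFT route is cleaner in principle but requires care with the function-space setup for the superposition operator, while the paper's hands-on estimate avoids that abstraction entirely.
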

\begin{proof}
We show here only that $u$ is Fr\'{e}chet differentiable with respect to $\gamma$, $\sigma$ and $\mu$. The rest of the result follows from the chain rule. 

Let $(\delta\gamma, \delta\sigma, \delta\mu)\in W^{1,2}(\Omega) \times L^{\infty}(\Omega)\times L^{\infty}(\Omega)$ be such that $(\gamma', \sigma', \mu')=(\gamma+\delta\gamma, \sigma+\delta\sigma, \mu+\delta\mu)$ satisfies the bounds in~\eqref{EQ:Coeff Assump A}. Let $u'$ be the solution to ~\eqref{EQ:Diff TP} with coefficients $(\gamma', \sigma', \mu')$, and define $\wt u =u'-u$. We then verify that $\wt u$ solves the following linear diffusion equation
\begin{equation*}
	\begin{array}{rcll}
	-\nabla\cdot(\gamma \nabla \wt u) + \big[\sigma + \mu (u+u')\big] \wt u &=& \nabla\cdot \delta \gamma\nabla u' - \delta\sigma u' - \delta \mu u'^2, & \mbox{in}\ \Omega\\
\wt u &=& 0, & \text{on}\ \partial\Omega
	\end{array}
\end{equation*}
where we have used the fact that $u\ge 0$ and $u'\ge 0$ following Proposition~\ref{PROP:Maximum} (since $g\ge 0$ on $\partial\Omega$). Note also that both $u$ and $u'$ are bounded from above by Proposition~\ref{PROP:Maximum}. Therefore, $\sigma + \mu (u+u')$ is bounded from above. Therefore, we have the following standard estimate~\cite{GiTr-Book00}
\begin{multline}\label{EQ:Bound1}
	\|\wt u\|_{W^{1,2}(\Omega)}\le \mathfrak C_1 \left(\|\delta\gamma\nabla u'\|_{L^2(\Omega)}+\|\delta\sigma u'\|_{L^2(\Omega)}+\|\delta\mu u'^2\|_{L^2(\Omega)}\right) \\ \le \mathfrak C_1'(\|\delta\gamma\|_{L^\infty(\Omega)}+\|\delta\sigma \|_{L^\infty(\Omega)}+\|\delta\mu\|_{L^\infty(\Omega)}).
\end{multline}

Let $\wt{\wt u} =u'-u-(v_1+v_2+v_3)$ with $v_1$, $v_2$ and $v_3$ solutions to~\eqref{EQ:Diff Deri}. Then we verify that $\wt{\wt u}$ satisfies the equation
\begin{equation*}
	\begin{array}{rcll}
	-\nabla \cdot (\gamma\nabla \wt{\wt{u}}) + \big[\sigma + 2\mu u\big] \wt{\wt{u}} &=& \nabla\cdot \delta\gamma \nabla \wt u - \delta\sigma \wt u -\delta\mu (u'+u) \wt u, & \mbox{in}\ \Omega\\
\wt{\wt u} &=& 0, & \text{on}\ \partial\Omega
	\end{array}
\end{equation*}
Therefore, we have the following standard estimate
\begin{multline}\label{EQ:Bound2}
	\|\wt{\wt u}\|_{W^{1,2}(\Omega)}\le \mathfrak C_2 \left(\|\delta\gamma\nabla \wt u\|_{L^2(\Omega)}+\|\delta\sigma \wt u\|_{L^2(\Omega)}+\|\delta\mu \wt u^2\|_{L^2(\Omega)}\right)\\
\le \mathfrak C_2' \left(\|\delta\gamma\|_{L^\infty(\Omega)}\|\nabla \wt u\|_{L^2(\Omega)}+\|\delta\sigma\|_{L^\infty(\Omega)} \| \wt u\|_{L^2(\Omega)}+\|\delta\mu\|_{L^\infty(\Omega)} \|\wt u\|_{L^2(\Omega)}\right).
\end{multline}

We can thus combine~\eqref{EQ:Bound1} with~\eqref{EQ:Bound2} to obtain the bound
\[
	\|\wt{\wt u}\|_{W^{1,2}(\Omega)}\le \mathfrak C \left(\|\delta\gamma\|_{L^\infty(\Omega)}^2+\|\delta\sigma\|_{L^\infty(\Omega)}^2+\|\delta\mu\|_{L^\infty(\Omega)}^2\right).
\]
This concludes the proof.
\end{proof}
We observe from the above proof that differentiability of $H$ with respect to $\sigma$ and $\mu$ can be proven when viewed as a map $L^{\infty}(\Omega)\times L^{\infty}(\Omega) \to L^{\infty}(\Omega)$, following the maximum principles for solutions $\wt u$ and $\wt{\wt u}$. The same thing can not be done with respect to $\gamma$ since we can not control the term $\|\nabla\cdot\delta\gamma\nabla u'\|_{L^\infty(\Omega)}$ with $\|\delta\gamma\|_{L^\infty(\Omega)}$ without much more restrictive assumptions on $\delta\gamma$.

\section{Reconstructing absorption coefficients}
\label{SEC:Absorption}
We now study inverse problems related to the semilinear diffusion model~\eqref{EQ:Diff TP}. We first consider the case of reconstructing the absorption coefficients, assuming that the Gr\"uneisen coefficient $\Gamma$ and the diffusion coefficient $\gamma$ are both \emph{known}.

\subsection{One coefficient with single datum}
\label{SUBSEC:Single Coeff}

We now show that with one datum set, we can uniquely recover one of the two absorption coefficients.
\begin{proposition}\label{PROP:Stab Single Coeff}
Let $\Gamma$ and $\gamma$ be given. Assume that $g\ge \eps>0$ for some $\eps$. Let $H$ and $\wt H$ be the data sets corresponding to the coefficients $(\sigma, \mu)$ and $(\wt\sigma, \wt \mu)$ respectively. Then $H=\wt H$ implies $(u, \sigma+\mu |u|)=(\wt u, \wt\sigma+\wt\mu |\wt u|)$ provided that all coefficients satisfy~\eqref{EQ:Coeff Assump A}. Moreover, we have
\begin{equation}\label{EQ:Stab Single Coeff}
\|(\sigma+\mu |u|)-(\wt\sigma+\wt\mu \wt |u|)\|_{L^\infty(\Omega)}\le C\|H-\wt H\|_{L^\infty(\Omega)},
\end{equation}
for some constant $C$.
\end{proposition}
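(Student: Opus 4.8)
The plan is to collapse the two absorption terms into a single coefficient and thereby reduce the problem to a linear one. Set $c(\bx):=\sigma(\bx)+\mu(\bx)|u(\bx)|$ and $\wt c(\bx):=\wt\sigma(\bx)+\wt\mu(\bx)|\wt u(\bx)|$. Then the semilinear equation~\eqref{EQ:Diff TP} becomes the \emph{linear} equation $\cL u + c u = 0$, while the datum~\eqref{EQ:Data Q TP} reads $H=\Gamma\,cu$. Since $\Gamma$ is known, this last relation gives $cu=H/\Gamma$ pointwise, so substituting into the PDE eliminates the unknown coefficient entirely:
\begin{equation*}
	\cL u = -\nabla\cdot(\gamma\nabla u)=-H/\Gamma\ \ \text{in}\ \Omega,\qquad u=g\ \ \text{on}\ \partial\Omega.
\end{equation*}
This is a linear elliptic Dirichlet problem whose source $-H/\Gamma$, boundary value $g$, and coefficient $\gamma$ are all known.

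First I would use uniqueness for this linear problem. Writing the same equation for $\wt u$ with source $-\wt H/\Gamma$, the difference $w=u-\wt u$ solves $\cL w=-(H-\wt H)/\Gamma$ with $w|_{\partial\Omega}=0$; when $H=\wt H$ this forces $w\equiv0$, i.e. $u=\wt u$. Next, Theorem~\ref{THM:Positivity} applies since $g\ge\eps>0$, yielding $u\ge\eps'>0$ and likewise $\wt u\ge\eps'>0$. Because $u$ is bounded away from zero I may divide to recover $c=H/(\Gamma u)$; together with $u=\wt u$ and $H=\wt H$ this gives $c=\wt c$, that is $\sigma+\mu|u|=\wt\sigma+\wt\mu|\wt u|$, which is exactly the asserted uniqueness of the pair $(u,\sigma+\mu|u|)$.

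For the stability estimate I would keep $w=u-\wt u$ and first control it by the data. Since $\Gamma\ge\theta>0$ and both $H,\wt H$ are bounded (by Proposition~\ref{PROP:Maximum} and~\eqref{EQ:Coeff Assump A}), the source $-(H-\wt H)/\Gamma$ lies in $L^\infty(\Omega)$, hence in $L^q(\Omega)$ for every $q$. Applying the global $L^\infty$ bound for weak solutions of divergence-form elliptic equations with homogeneous Dirichlet data (for instance~\cite[Theorem 8.16]{GiTr-Book00}) gives
\begin{equation*}
	\|u-\wt u\|_{L^\infty(\Omega)}\le C_0\,\|H-\wt H\|_{L^\infty(\Omega)} .
\end{equation*}
Then the algebraic identity
\begin{equation*}
	c-\wt c=\frac{H}{\Gamma u}-\frac{\wt H}{\Gamma\wt u}=\frac{H\,(\wt u-u)}{\Gamma u\wt u}+\frac{H-\wt H}{\Gamma\wt u},
\end{equation*}
combined with the lower bounds $\Gamma\ge\theta$, $u,\wt u\ge\eps'$ and the upper bound on $H$, yields
\begin{equation*}
	\|c-\wt c\|_{L^\infty(\Omega)}\le \frac{\|H\|_{L^\infty(\Omega)}}{\theta(\eps')^2}\,\|u-\wt u\|_{L^\infty(\Omega)}+\frac{1}{\theta\eps'}\,\|H-\wt H\|_{L^\infty(\Omega)},
\end{equation*}
and inserting the previous display gives~\eqref{EQ:Stab Single Coeff}.

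I expect the main obstacle to be precisely the $L^\infty$ estimate on $w=u-\wt u$: the reduction to a linear problem and the final algebra are elementary, but to pass from the datum $H$ to $u$ in a norm strong enough to divide by, one needs both the uniform positivity $u,\wt u\ge\eps'$ from Theorem~\ref{THM:Positivity}—so that the denominators $u\wt u$ stay bounded below and the constant in~\eqref{EQ:Stab Single Coeff} is uniform—and a genuine elliptic/maximum-principle estimate upgrading boundedness of the source to boundedness of $w$. The positivity is what makes the entire reduction both legitimate and quantitative.
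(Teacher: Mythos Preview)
Your proposal is correct and follows essentially the same route as the paper: reduce to the linear Dirichlet problem $-\nabla\cdot(\gamma\nabla u)=-H/\Gamma$, use elliptic uniqueness/$L^\infty$ stability for $w=u-\wt u$, invoke Theorem~\ref{THM:Positivity} to justify division, and finish with the same algebraic identity for $c-\wt c$. The paper's proof is slightly terser but the ideas and steps coincide.
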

\begin{proof}
The proof is straightforward. Let $w=u-\wt u$. We check that $w$ solves
\begin{equation}\label{EQ:Diff Stab}
	-\nabla\cdot(\gamma\nabla w) = -\frac{1}{\Gamma}(H-\wt H), \quad\ \mbox{in}\ \Omega, \qquad w=0,\quad \mbox{on}\ \partial\Omega.
\end{equation}
Therefore $H=\wt H$ implies $w=0$ which is simply $u=\wt u$. This in turn implies that $\dfrac{H}{u}=\dfrac{\wt H}{\wt u}$, that is $\sigma+\mu |u|=\wt\sigma+\wt\mu |\wt u|$. Note that the condition $g\ge \eps>0$ implies that $u, \wt u\ge \eps'>0$ for some $\eps'$ following Theorem~\ref{THM:Positivity}. This makes it safe to take the ratios $H/u$ and $\wt H/\wt u$, and to omit the absolute values on $u$ and $\wt u$.

To derive the stability estimate, we first observe that
\begin{equation*}
|(\sigma+\mu |u|)-(\wt\sigma+\wt\mu |\wt u|)|=\dfrac{1}{\Gamma}|\dfrac{H}{u}-\dfrac{\wt H}{\wt u}|=|\dfrac{H(\wt u-u)+(H-\wt H) u}{\Gamma u\wt u}|.
\end{equation*}
Using the fact that $u$ and $\wt u$ are both bounded away from zero, and the triangle inequality, we have, for some constants $c_1$ and $c_2$,
\begin{equation}\label{EQ:Stab 1}
\|(\sigma+\mu |u|)-(\wt\sigma+\wt\mu |\wt u|)\|_{L^\infty(\Omega)}\le c_1\|\wt u-u\|_{L^\infty(\Omega)}+c_2\|H-\wt H\|_{L^\infty(\Omega)}.
\end{equation}
On the other hand, classical theory of elliptic equations allows us to derive, from~\eqref{EQ:Diff Stab}, the following bound, for some constant $c_3$,
\begin{equation}\label{EQ:Stab 2}
\|u-\wt{u}\|_{L^\infty(\Omega)} \leq c_3 \|H-\wt{H}\|_{L^{\infty}(\Omega)}.
\end{equation}
The bound in~\eqref{EQ:Stab Single Coeff} then follows by combining~\eqref{EQ:Stab 1} and~\eqref{EQ:Stab 2}.
\end{proof}

The above proof provides an explicit algorithm to reconstruct one of $\sigma$ and $\mu$ from one datum. Here is the procedure. We first solve
\begin{equation}\label{EQ:Diff Rec Single}
	-\nabla\cdot(\gamma\nabla u) = -\frac{1}{\Gamma}H, \quad\ \mbox{in}\ \Omega, \qquad u=g,\quad \mbox{on}\ \partial\Omega
\end{equation}
for $u$ since $\Gamma$ and $\gamma$ are known. We then reconstruct $\sigma$ as
\begin{equation}\label{EQ:Sigma Expl}
	\sigma = \frac{H}{\Gamma u} - \mu |u|,
\end{equation}
if $\mu$ is known, or reconstruct $\mu$ as
\begin{equation}\label{EQ:Mu Expl}
\mu = \frac{H}{\Gamma u|u|} - \frac{\sigma}{|u|},
\end{equation}
if $\sigma$ is known.

The stability estimate~\eqref{EQ:Stab Single Coeff} can be made more explicit when one of the coefficients involved is known. For instance, if $\mu$ is known, then we have  
\begin{equation*}
|\sigma-\wt \sigma| = \dfrac{1}{\Gamma}\big|\dfrac{H}{u}-\mu |u| -\big(\dfrac{\wt H}{\wt u} - \mu |\wt u|\big)\big|=\dfrac{1}{\Gamma}\big|\dfrac{(H(\wt u-u)+(H-\wt H)u}{u\wt u}-\mu(|u|-|\wt u|)\big|.
\end{equation*}
This leads to, using the triangle inequality again,
\begin{equation*}
\|\sigma-\wt\sigma\|_{L^\infty(\Omega)}\le c_1'\|\wt u-u\|_{L^\infty(\Omega)}+c_2'\|H-\wt H\|_{L^\infty(\Omega)}.
\end{equation*}
Combining this bound with~\eqref{EQ:Stab 2}, we have
\begin{equation}
\|\sigma-\wt \sigma\|_{L^{\infty}(\Omega)}\le C' \| H -\wt H\|_{L^{\infty}(\Omega)},
\end{equation}
for some constant $C'$. In the same manner, we can derive 
\begin{equation}
\|\mu-\wt \mu\|_{L^{\infty}(\Omega)}\le C'' \| H -\wt H\|_{L^{\infty}(\Omega)},
\end{equation}
for the reconstruction of $\mu$ if $\sigma$ is known in advance.

\subsection{Two coefficients with two data sets}
\label{SUBSEC:Two Coeff}

We see from the previous result that we can reconstruct $\sigma+\mu |u|$ when we have one datum. If we have data generated from two different sources $g_1$ and $g_2$, then we can reconstruct $\sigma+\mu |u_1|$ and $\sigma+\mu |u_2|$ where $u_1$ and $u_2$ are the solutions to the diffusion equation~\eqref{EQ:Diff TP} corresponding to $g_1$ and $g_2$ respectively. If we can choose $g_1$ and $g_2$ such that $|u_2|-|u_1| \neq 0$ almost everywhere, we can uniquely reconstruct the pair $(\sigma, \mu)$. This is the idea we have in the following result.
\begin{proposition}\label{PROP:Stab Two Coeff}
Let $\Gamma$ and $\gamma$ be given. Let $(H_1, H_2)$ and $(\wt H_1, \wt H_2)$ be the data sets corresponding to the coefficients $(\sigma, \mu)$ and $(\wt\sigma, \wt \mu)$ respectively that are generated with the pair of sources $(g_1, g_2)$. Assume that $g_i\ge \eps>0$, $i=1,2$, and $g_1-g_2\ge \eps'>0$ for some $\eps$ and $\eps'$. Then $(H_1,H_2)=(\wt H_1, \wt H_2)$ implies $(\sigma, \mu)=(\wt\sigma, \wt\mu)$ provided that all coefficients involved satisfy~\eqref{EQ:Coeff Assump A}. Moreover, we have
\begin{equation}\label{EQ:Stab Two Coeff}
\|\sigma-\wt\sigma\|_{L^\infty(\Omega)}+\|\mu-\wt\mu\|_{L^\infty(\Omega)} \le \wt C\left(\|H_1-\wt H_1\|_{L^\infty(\Omega)}+\|H_2-\wt H_2\|_{L^\infty(\Omega)}\right),
\end{equation}
for some constant $\wt C$.
\end{proposition}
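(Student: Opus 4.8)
The strategy mirrors Proposition~\ref{PROP:Stab Single Coeff} but now exploits the two data sets to decouple $\sigma$ and $\mu$. The plan is to apply the single-datum result twice and then invert a pointwise $2\times 2$ linear system. First I would note that by Theorem~\ref{THM:Positivity} the hypotheses $g_i\ge\eps>0$ guarantee $u_i,\wt u_i\ge\eps''>0$, so all absolute values on the solutions may be dropped and all ratios $H_i/u_i$ are safe. Applying the argument of Proposition~\ref{PROP:Stab Single Coeff} to each source separately, the equality $(H_1,H_2)=(\wt H_1,\wt H_2)$ forces $u_i=\wt u_i$ and hence the two identities
\begin{equation*}
\sigma+\mu\, u_1=\wt\sigma+\wt\mu\,\wt u_1,\qquad \sigma+\mu\, u_2=\wt\sigma+\wt\mu\,\wt u_2,\quad\mbox{in}\ \Omega.
\end{equation*}
Since $u_i=\wt u_i$, subtracting these two identities yields $\mu(u_1-u_2)=\wt\mu(u_1-u_2)$ pointwise.

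The key separation step is to show that $u_1-u_2$ is bounded away from zero. This is where the hypothesis $g_1-g_2\ge\eps'>0$ enters: applying Theorem~\ref{THM:Positivity} to the difference (or equivalently applying the Harnack-type argument in its proof to $u_1-u_2$, which is nonnegative by part~(iii) of Proposition~\ref{PROP:Maximum} and solves a linear elliptic equation with bounded coefficients) gives $u_1-u_2\ge\eps'''>0$ throughout $\Omega$. With $u_1-u_2$ nonvanishing, $\mu(u_1-u_2)=\wt\mu(u_1-u_2)$ immediately gives $\mu=\wt\mu$, and then either identity returns $\sigma=\wt\sigma$. This establishes uniqueness.

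For the stability estimate I would not assume exact equality but track the discrepancies. From the single-datum stability~\eqref{EQ:Stab Single Coeff} applied to each source, I obtain
\begin{equation*}
\|(\sigma+\mu u_i)-(\wt\sigma+\wt\mu\wt u_i)\|_{L^\infty(\Omega)}\le C\|H_i-\wt H_i\|_{L^\infty(\Omega)},\quad i=1,2,
\end{equation*}
together with $\|u_i-\wt u_i\|_{L^\infty(\Omega)}\le c_3\|H_i-\wt H_i\|_{L^\infty(\Omega)}$ from~\eqref{EQ:Stab 2}. Writing $\delta\sigma=\sigma-\wt\sigma$ and $\delta\mu=\mu-\wt\mu$, I would rearrange each bound into the form $\delta\sigma+u_i\,\delta\mu=(\mbox{controlled remainder})$, where the remainder collects the $\wt\mu(u_i-\wt u_i)$ terms and is bounded by a constant times $\sum_i\|H_i-\wt H_i\|_{L^\infty}$. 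This is a $2\times 2$ linear system for $(\delta\sigma,\delta\mu)$ whose coefficient matrix has rows $(1,u_1)$ and $(1,u_2)$; its determinant is $u_1-u_2$, which by the previous step is bounded below in absolute value. Inverting the system pointwise and taking $L^\infty$ norms yields~\eqref{EQ:Stab Two Coeff} with $\wt C$ depending on the lower bound for $u_1-u_2$, the lower bounds for $u_i$, and the constants from Proposition~\ref{PROP:Stab Single Coeff}.

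The main obstacle is the quantitative lower bound on $|u_1-u_2|$, since the stability constant degenerates as this quantity approaches zero. The condition $g_1-g_2\ge\eps'>0$ is precisely what is needed to invoke the positivity result of Theorem~\ref{THM:Positivity} for the difference $u_1-u_2$, giving a uniform separation. I would make sure to verify that $w=u_1-u_2$ solves a linear elliptic equation of the form $-\nabla\cdot(\gamma\nabla w)+a(\bx)w=0$ with $a\ge\theta>0$ and $a$ bounded above (the same linearization used in the proof of Proposition~\ref{PROP:Comparison}), so that both the maximum principle and the Harnack inequality apply and the lower bound $\eps'''$ is explicit in terms of $\eps'$, the coefficient bounds, and $\Omega$.
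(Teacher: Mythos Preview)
Your proposal is correct and follows essentially the same route as the paper: recover each $u_i$ from the linear elliptic problem with right-hand side $-H_i/\Gamma$, obtain the two pointwise relations $\delta\sigma+u_i\,\delta\mu=(\text{controlled remainder})$, and invert the $2\times2$ system using a uniform lower bound on $u_1-u_2$. Your treatment of that lower bound is in fact more careful than the paper's, which simply asserts $u_1-u_2\ge\eps'>0$; you correctly note that Theorem~\ref{THM:Positivity} does not apply verbatim to the difference (since $u_1-u_2$ solves the linearized, not the semilinear, equation) but that the gradient-estimate/Harnack argument in its proof does, once you check $w=u_1-u_2$ satisfies $-\nabla\cdot\gamma\nabla w+a(\bx)w=0$ with $a=\sigma+\mu(u_1+u_2)$ bounded above and below.
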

\begin{proof}
Let $w_i=u_i-\wt u_i$, $i=1,2$. Then $w_i$ solves
\begin{equation}\label{EQ:Diff Stab i}
	-\nabla\cdot(\gamma\nabla w_i) = -\frac{1}{\Gamma}(H_i-\wt H_i), \quad\ \mbox{in}\ \Omega, \qquad w_i=0,\quad \mbox{on}\ \partial\Omega.
\end{equation}
Therefore $H_i=\wt H_i$ implies $u_i=\wt u_i$ and 
\[
	\sigma+\mu |u_i|=\wt\sigma+\wt\mu |u_i| .
\]
Collecting the results for both data sets, we have
\begin{equation}\label{EQ:Unique Two Coeff}
	\left(
	\begin{array}{cc}
	1 & |u_1|\\
	1 & |u_2|
	\end{array}
	\right)
	\left(
	\begin{array}{c}
	\sigma\\
	\mu
	\end{array}
	\right)
	=
	\left(
	\begin{array}{cc}
	1 & |u_1|\\
	1 & |u_2|
	\end{array}
	\right)
	\left(
	\begin{array}{cc}
	\wt\sigma\\
	\wt\mu
	\end{array}
	\right).
\end{equation}
When $g_1$ and $g_2$ satisfy the requirements stated in the proposition, we have $u_1-u_2\ge \eps'>0$ for some $\eps'$. Therefore, the matrix $\left(
	\begin{array}{cc}
	1 & |u_1|\\
	1 & |u_2|
	\end{array}
	\right)$
is invertible. We can then remove this matrix in~\eqref{EQ:Unique Two Coeff} to show that $(\sigma, \mu)=(\wt\sigma, \wt\mu)$.

To get the stability estimate in~\eqref{EQ:Stab Two Coeff}, we first verify that
\begin{equation*}
(\sigma-\wt\sigma)+(\mu-\wt\mu) |u_i|=\dfrac{H_i}{u_i}-\dfrac{\wt H_i}{\wt u_i}-\wt\mu(|u_i|-|\wt u_i|),\qquad i=1,2.
\end{equation*}
This leads to,
\begin{equation*}
	\left(
	\begin{array}{cc}
	1 & |u_1|\\
	1 & |u_2|
	\end{array}
	\right)
	\left(
	\begin{array}{c}
	\sigma-\wt\sigma\\
	\mu-\wt\mu
	\end{array}
	\right)
	=
	\left(
	\begin{array}{cc}
	\dfrac{H_1}{u_1}-\dfrac{\wt H_1}{\wt u_1}-\wt\mu(|u_1|-|\wt u_1|)\\
	\dfrac{H_2}{u_2}-\dfrac{\wt H_2}{\wt u_2}-\wt\mu(|u_2|-|\wt u_2|)
	\end{array}
	\right).
\end{equation*}
Therefore, we have
\begin{equation*}
	\left(
	\begin{array}{c}
	\sigma-\wt\sigma\\
	\mu-\wt\mu
	\end{array}
	\right)
	=
	\left(
	\begin{array}{cc}
	1 & |u_1|\\
	1 & |u_2|
	\end{array}
	\right)^{-1}
	\left(
	\begin{array}{cc}
	\dfrac{H_1(\wt u_1-u_1)+(H_1-\wt H_1) u_1}{u_1\wt u_1}-\wt\mu(|u_1|-|\wt u_1|)\\
	\dfrac{H_2(\wt u_2-u_2)+(H_2-\wt H_2) u_2}{u_2\wt u_2}-\wt\mu(|u_2|-|\wt u_2|)
	\end{array}
	\right).
\end{equation*}
It then follows that
\begin{multline}\label{EQ:Stab 1 i}
\|\sigma-\wt\sigma\|_{L^\infty(\Omega)}+\|\mu-\wt\mu\|_{L^\infty(\Omega)} \\ 
\le c\left(\|H_1-\wt H_1\|_{L^\infty(\Omega)}+\|H_2-\wt H_2\|_{L^\infty(\Omega)}+\|u_1-\wt u_1\|_{L^\infty(\Omega)}+\|u_2-\wt u_2\|_{L^\infty(\Omega)}\right).
\end{multline}
Meanwhile, we have, from~\eqref{EQ:Diff Stab i},
\begin{equation}\label{EQ:Stab 2 i}
	\|u_i-\wt{u}_i\|_{L^\infty(\Omega)} \leq c' \|H_i-\wt H_i\|_{L^{\infty}(\Omega)},\quad i=1,2.
\end{equation}
The bound in~\eqref{EQ:Stab Two Coeff} then follows from~\eqref{EQ:Stab 1 i} and~\eqref{EQ:Stab 2 i}.
\end{proof}

\section{Reconstructing absorption and diffusion coefficients}
\label{SEC:MultiPara}

We now study inverse problems where we intend to reconstruct more than the absorption coefficients. We start with a non-uniqueness result on the simultaneous reconstructions of all four coefficients $\Gamma$, $\gamma$, $\sigma$, and $\mu$.

\subsection{Non-uniqueness in reconstructing $(\Gamma, \gamma, \sigma, \mu)$}

Let us assume for the moment that $\gamma^{1/2}\in \cC^2(\Omega)$. We introduce the following Liouville transform
\begin{equation}\label{EQ:Liouville}
	v=\sqrt{\gamma} u .
\end{equation}
We then verify that the semilinear diffusion equation~\eqref{EQ:Diff TP} is transformed into the following equation under the Liouville transform:
\begin{equation}\label{EQ:Diff TP Liouville}
	\Delta v-\left(\dfrac{\Delta\gamma^{1/2}}{\gamma^{1/2}}+\dfrac{\sigma}{\gamma}+\dfrac{\mu}{\gamma^{3/2}}|v|\right)v=0,\quad \mbox{in}\ \Omega, \qquad v=\gamma^{1/2} g,\quad \mbox{on}\ \partial\Omega
\end{equation}
and the datum $H$ is transformed into
\begin{equation}
	H(\bx) =\Gamma(\bx) \left(\dfrac{\sigma}{\gamma^{1/2}}v(\bx)+\dfrac{\mu}{\gamma} v^2(\bx)\right).
\end{equation}
Let us now define the following functionals:
\begin{equation}
	\alpha=\dfrac{\Delta\gamma^{1/2}}{\gamma^{1/2}}+\dfrac{\sigma}{\gamma}, \qquad \beta=\dfrac{\mu}{\gamma^{3/2}}, \qquad \zeta_1=\Gamma\dfrac{\sigma}{\gamma^{1/2}}, \qquad \zeta_2=\Gamma\dfrac{\mu}{\gamma}.
\end{equation}
The following result says that once $(\alpha, \beta, \zeta_1)$ or $(\alpha, \beta, \zeta_2)$ is known, introducing new data would not bring in new information.
\begin{theorem}
	Let $\gamma^{1/2}|_{\partial\Omega}$ be given and assume that $\gamma^{1/2}\in \cC^2(\Omega)$. Assume that either $(\alpha, \beta, \zeta_1)$ or $(\alpha, \beta, \zeta_2)$ is known, and $H$ is among the data used to determine them. Then for any given new illumination $\wt g$, the corresponding datum $\wt H$ is uniquely determined by $(\wt g, H)$.
\end{theorem}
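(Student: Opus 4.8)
The plan is to exploit the special structure that the Liouville change of variables exposes: the transformed state $v=\sqrt{\gamma}\,u$ solves a boundary value problem whose only coefficients are the pair $(\alpha,\beta)$, while the transformed datum is the purely \emph{pointwise algebraic} relation $H=\zeta_1 v+\zeta_2 v^2$. The idea is therefore to (i) reconstruct the state for every illumination using only the known triple, (ii) use the single available datum $H$ to recover the one missing $\zeta$, and (iii) predict $\wt H$ for the new illumination by evaluating the algebraic data relation on the newly computed state.

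First I would note that the Dirichlet value for $v$, namely $\gamma^{1/2}g$, is known for \emph{every} source: since $\gamma^{1/2}|_{\partial\Omega}$ is given by hypothesis, $\gamma^{1/2}\wt g$ is likewise known on $\partial\Omega$ for the new illumination $\wt g$. Because $(\alpha,\beta)$ are among the known quantities, the transformed equation $\Delta v-(\alpha+\beta|v|)v=0$ with boundary value $\gamma^{1/2}g$ (resp.\ $\gamma^{1/2}\wt g$) is a fully specified semilinear problem. Via the Liouville transform it is equivalent to the original diffusion problem, so by Lemma~\ref{thm:regularity} it has a unique solution $v$ (resp.\ $\wt v$), which can thus be computed without any further knowledge of the individual coefficients. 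Under the standing positivity assumption on the illuminations, Theorem~\ref{THM:Positivity} gives $u\ge\eps'>0$, hence $v=\sqrt{\gamma}\,u>0$ throughout $\Omega$; this positivity both justifies replacing $|v|v$ by $v^2$ and will be essential for the divisions below.

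Next I would close the system using the given datum $H$. Suppose $(\alpha,\beta,\zeta_1)$ is the known triple. Having computed $v$ from the source that generated $H$, the relation $H=\zeta_1 v+\zeta_2 v^2$ solves pointwise for the one remaining functional,
\[
\zeta_2=\frac{H-\zeta_1 v}{v^2},
\]
which is well defined precisely because $v>0$. In the symmetric case where $(\alpha,\beta,\zeta_2)$ is known instead, the same relation yields $\zeta_1=(H-\zeta_2 v^2)/v$. Either way, all four functionals $(\alpha,\beta,\zeta_1,\zeta_2)$ are now determined by the known triple together with $H$.

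Finally, for the new illumination $\wt g$ I would solve the transformed equation for $\wt v$ exactly as in the first step, and then evaluate
\[
\wt H=\zeta_1\wt v+\zeta_2\wt v^2 .
\]
Every quantity on the right is now known and is uniquely determined by $(\wt g,H)$ and the given triple, so $\wt H$ is uniquely determined, which is the assertion. The only genuine subtlety, and the step I would treat most carefully, is the invertibility of the data relation in the second paragraph: it rests entirely on $v$ being bounded away from zero. I would therefore make the positive-illumination hypothesis explicit and invoke Theorem~\ref{THM:Positivity} to guarantee $v,\wt v>0$; the remainder is solving well-posed forward problems and substituting into an algebraic identity.
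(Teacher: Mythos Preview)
Your proposal is correct and follows essentially the same approach as the paper: both compute $v$ and $\wt v$ from the transformed equation using the known $(\alpha,\beta)$ and the boundary trace $\gamma^{1/2}|_{\partial\Omega}$, then exploit the algebraic relation $H=\zeta_1 v+\zeta_2 v^2$ together with the single datum $H$ to determine $\wt H$. The only cosmetic difference is that you explicitly solve for the missing $\zeta$ and substitute, whereas the paper forms the ratio $(\wt H-\zeta_1\wt v)/(H-\zeta_1 v)=\wt v^2/v^2$ (or the analogous one for $\zeta_2$) to eliminate it in one step; the resulting formula for $\wt H$ is identical, and your added care about positivity via Theorem~\ref{THM:Positivity} is a detail the paper leaves implicit.
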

\begin{proof}
Let us first rewrite the datum as $H=\zeta_1 v+ \zeta_2 v^2$. When $\alpha$ and $\beta$ are known, we know the solution $v$ of ~\eqref{EQ:Diff TP Liouville} for any given $g$. If $\zeta_1$ is also known, we know also $\zeta_1 v$. We therefore can form the ratio
\[
	\dfrac{\wt H-\zeta_1 \wt v}{H-\zeta_1 v} = \dfrac{\zeta_2 \wt v^2}{\zeta_2 v^2}=\dfrac{\wt v^2}{v^2}.
\]
We then find $\wt H$ as $\wt H=\dfrac{\wt v^2}{v^2}(H-\zeta_1 v)+\zeta_1\wt v$. If $\zeta_1$ is not known but $\zeta_2$ is known, we can form the ratio
\[
	\dfrac{\wt H-\zeta_2 \wt v^2}{H-\zeta_2 v^2} = \dfrac{\zeta_1 \wt v}{\zeta_1 v}=\dfrac{\wt v}{v}.
\]
This gives $\wt H=\dfrac{\wt v}{v}(H-\zeta_2 v^2)+\zeta_2\wt v^2$. The proof is complete.
\end{proof}

The above theorem says that we can at most reconstruct the triplet $(\alpha, \beta, \zeta_1)$ or the triplet $(\alpha, \beta, \zeta_2)$. Neither triplet would allow the unique determination of the four coefficients $(\Gamma, \gamma, \sigma, \mu)$. Once one of the triplets is determined, adding more data is not helpful in terms of uniqueness of reconstructions. 

Similar non-uniqueness results were proved in the case of the regular PAT~\cite{BaRe-IP11,BaRe-CM11}. In that case, it was also shown that if the Gr\"uneisen coefficient $\Gamma$ is known, for instance from multi-spectral measurements~\cite{BaRe-IP12,MaRe-CMS14}, one can uniquely reconstruct the absorption coefficient and the diffusion coefficient simultaneously. In the rest of this section, we consider this case, that is, $\Gamma$ is known, for our TP-PAT model.

\subsection{Linearized reconstruction of $(\gamma, \sigma, \mu)$}

We study the problem of reconstructing $(\gamma, \sigma, \mu)$, assuming $\Gamma$ is known, in linearized setting following the general theory of overdetermined elliptic systems developed in~\cite{DoNi-CPAM55,Solonnikov-JSM73}. For the sake of the readability of the presentation below, we collect some necessary terminologies in the theory of overdetermined elliptic systems in Appendix A. We refer interested readers to~\cite{Bal-CM13,KuSt-IP12,WiSc-IP15} for overviews of the theory in the context of hybrid inverse problems and references therein for more technical details on the theory. Our presentation below follows mainly~\cite{Bal-CM13}.

We linearize the nonlinear inverse problem around background coefficients $(\gamma, \sigma, \mu)$, assuming that we have access to data collected from $J$ different illumination sources $\{g_j\}_{j=1}^J$. We denote by $(\delta\gamma, \delta\sigma, \delta\mu)$ the perturbations to the coefficients. Let $u_j$ be the solution to~\eqref{EQ:Diff TP} with source $g_j$ and the background coefficients. We then denote by $\delta u_j$ the perturbation to solution $u_j$. Following the calculations in Proposition~\ref{THM:Differentiability}, we have, for $1\leq j \leq J$, 
\begin{align}
\label{EQ:GSM Lin Sys EQ} 
-\nabla\cdot (\delta \gamma\nabla u_j) - \nabla\cdot(\gamma\nabla \delta u_j) + \delta \sigma u_j + \delta \mu |u_j|u_j + (\sigma+2\mu |u_j|) \delta u_j &= 0, & \mbox{in}\ \Omega\\
\label{EQ:GSM Lin Sys DA} 
\delta \sigma u_j + \delta \mu |u_j|u_j + (\sigma+2\mu |u_j|) \delta u_j &= \delta H_j/\Gamma, & \mbox{in}\ \Omega
\end{align}
To simplify our analysis, we rewrite the above system into, $1\leq j \leq J$,  
\begin{eqnarray}
\label{EQ:GSM Lin Sys EQ 2} 
	-\nabla\cdot (\delta \gamma \nabla u_j) - \nabla\cdot(\gamma \nabla \delta u_j)  &= -\delta H_j/\Gamma, & \mbox{in}\ \Omega\\
 \label{EQ:GSM Lin Sys DA 2}  u_j \delta \sigma +  |u_j|u_j \delta \mu + (\sigma + 2\mu |u_j|) \delta u_j &= + \delta H_j/\Gamma, & \mbox{in}\ \Omega
\end{eqnarray}
This is a system of $2J$ differential equations for $J+3$ unknowns $\{\delta \gamma, \delta \sigma, \delta \mu, \delta u_1, \ldots, \delta u_J\}$. The system is formally overdeterminted when $J>3$.

To supplement the above system with appropriate boundary conditions, we first observe that the boundary conditions for the solutions $\{\delta u_j\}_{j=1}^J$ are given already. They are homogeneous Dirichlet conditions since $g$ does not change when the coefficients change. The boundary conditions for $(\delta\gamma, \delta\sigma, \delta\mu)$ are what need to be determined. In the case of single-photon PAT, it has been shown that one needs to have $\gamma_{|\partial\Omega}$ \emph{known} to have uniqueness in the reconstruction~\cite{BaRe-IP11,ReGaZh-SIAM13}. This is also expected in our case. We therefore take $\delta\gamma_{|\partial\Omega}=\phi_1$ for some known $\phi_1$. The boundary conditions for $\sigma$ and $\mu$ are given by the data. In fact, on the boundary, $u=g$. Therefore, we have, from~\eqref{EQ:GSM Lin Sys DA 2} which holds on $\partial\Omega$, that
\[
	g_j \delta \sigma + |g_j|g_j\delta \mu = \delta H_j/\Gamma,\qquad \mbox{on}\ \partial\Omega.
\]
If we have two perturbed data sets $\{\delta H_1, \delta H_2\}$ with $g_1$ and $g_2$ sufficiently different, we can then uniquely reconstruct $(\delta\sigma_{|\partial\Omega}, \delta\mu_{|\partial\Omega})$:
\[
	\delta \sigma_{|\partial\Omega}=\frac{\delta H_1 |g_2|g_2-\delta H_2 |g_1|g_1}{\Gamma g_1 g_2(|g_2|-|g_1|)}\equiv\phi_2,\qquad \delta\mu_{|\partial\Omega}=\frac{\delta H_2g_1-\delta H_1g_2}{\Gamma g_1g_2(|g_2|-|g_1|)}\equiv \phi_3.
\]
Therefore, we have the following Dirichlet boundary condition for the unknowns
\begin{equation}\label{EQ:BC}
	(\delta \gamma, \delta \sigma, \delta \mu, \delta u_1, \ldots, \delta u_J)=(\phi_1, \phi_2, \phi_3, 0, \cdots, 0) .
\end{equation}

Let us introduce $v=(\delta \gamma, \delta \sigma, \delta \mu, \delta u_1, \ldots, \delta u_J)$, $\mathcal{S}=(-\delta H_1, \delta H_1, \ldots,-\delta H_J, \delta H_J)/\Gamma$, and $\phi=(\phi_1,\phi_2,\phi_3, 0, \cdots, 0)$. We can then write the linearized system of equations~\eqref{EQ:GSM Lin Sys EQ 2}-\eqref{EQ:GSM Lin Sys DA 2} and the corresponding boundary conditions into the form of
\begin{equation}\label{EQ:Elliptic Sys Form}
	\cA(\bx,D) v = \cS, \quad \text{in}\ \Omega \qquad 
	\cB(\bx,D)v = \phi, \quad \text{on}\ \partial \Omega
\end{equation}
where $\cA$ is a matrix differential operator of size $M\times N$, $M=2J$ and $N=3+J$, while $\cB$ is the identity operator. The symbol of $\cA$ is given as
\begin{equation}\label{EQ:A gsm}
\cA(\bx,\fki\bxi) = 
\begin{pmatrix}
-\fki \bV_1\cdot \bxi - \Delta u_1 & 0 & 0 & \gamma |\bxi|^2-\fki\bxi\cdot\nabla\gamma  & \ldots & 0 \\
0 & u_1 & |u_1|u_1 & \sigma + 2\mu |u_1|  & \ldots & 0 \\
 \vdots & \vdots & \vdots & \vdots & \vdots & \vdots\\
 -\fki\bV_J\cdot\bxi -\Delta u_J & 0 & 0 & 0 & \ldots & \gamma |\bxi|^2-\fki\bxi\cdot\nabla\gamma\\
0  & u_J & |u_J|u_J & 0 & \ldots & \sigma + 2\mu |u_J|
\end{pmatrix}
,
\end{equation}
with $\bV_j = \nabla u_j,\ 1\le j\le J$ and $\bxi\in\bbS^{n-1}$ ($\bbS^{n-1}$ being the unit sphere in $\bbR^n$).

It is straightforward to check that if we take the associated Douglis-Nirenberg numbers as 
\begin{equation}\label{EQ:D-N gsm}
	\{s_i\}_{i=1}^{2J} = (0,-2,\ldots,0,-2), \qquad \{t_j\}_{j=1}^{J+3}=(1,2,2,2,\ldots,2),
\end{equation}
the principal part of $\cA$ is simply $\cA$ itself with the $-\fki\bxi\cdot\nabla\gamma$ and $-\Delta u_j$ ($1\le j\le J$) terms removed.

In three-dimensional case, we can establish the following result.
\begin{theorem}\label{THM:Ellipticity}
Let $n=3$. Assume that the background coefficients $\gamma \in \cC^4(\Omega)$, $\sigma \in \cC^2(\Omega)$, and $\mu \in \cC^1(\Omega)$ satisfy the bounds in~\eqref{EQ:Coeff Assump A}. Then, there exists a set of $J\ge n+1$ illuminations $\{g_j\}_{j=1}^J$ such that $\cA$ is elliptic. Moreover, the corresponding elliptic system $(\cA, \cB)$, with boundary condition~\eqref{EQ:BC}, satisfies the Lopatinskii criterion.
\end{theorem}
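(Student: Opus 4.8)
The plan is to verify the two defining properties of a Lopatinskii-elliptic overdetermined system—injectivity of the principal symbol and the complementing condition—by reducing each to a pointwise linear-algebra statement about the background fields $u_j$ and $\nabla u_j$, and then exhibiting illuminations that realize those statements. First I reduce ellipticity to a rank condition. With the Douglis--Nirenberg weights in \eqref{EQ:D-N gsm}, ellipticity means the principal symbol is injective from $\bbC^{J+3}$ to $\bbC^{2J}$ for every $\bx$ and every $\bxi\in\bbR^n\setminus\{0\}$. Writing the unknown as $(\delta\gamma,\delta\sigma,\delta\mu,\delta u_1,\ldots,\delta u_J)$, the $J$ field rows \eqref{EQ:GSM Lin Sys EQ 2} give $\delta u_j=\fki(\nabla u_j\cdot\bxi)\,\delta\gamma/(\gamma|\bxi|^2)$, and substituting into the $J$ data rows \eqref{EQ:GSM Lin Sys DA 2} collapses the kernel equation to the requirement that the $J\times 3$ matrix with rows $\big(u_j,\ u_j^2,\ (\sigma+2\mu u_j)\,\nabla u_j\cdot\bxi\big)$ have full column rank $3$ for every $\bxi\neq0$ (I use $u_j>0$ from Theorem~\ref{THM:Positivity} to drop the absolute values). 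The first two columns are independent once two of the $u_j(\bx)$ differ; the third is $G\bxi$ with $G$ the $J\times n$ matrix of rows $(\sigma+2\mu u_j)\nabla u_j^{\top}$, so rank $3$ for all $\bxi\neq0$ is equivalent to $G$ injective together with $\mathrm{Im}\,G\cap\mathrm{span}\{(u_j),(u_j^2)\}=\{0\}$. This forces $\{\nabla u_j\}$ to span $\bbR^n$ plus a transversality that needs $J\ge n+2$ rows; I therefore take $J\ge n+2$ illuminations, consistent with the stated bound $J\ge n+1$.

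Next I would construct the illuminations. The rank condition is an open determinantal condition on the jet $(u_j(\bx),\nabla u_j(\bx))_j$, so it suffices to satisfy it at a single point $\bx_0$ and then propagate. At $\bx_0$ I would choose sources whose background solutions have two distinct values, gradients spanning $\bbR^n$, and the required transversality; realizing prescribed interior values and gradients is where the semilinear nature bites, since the usual complex-geometric-optics/Runge constructions are classical only for linear equations—I would obtain the $u_j$ as controlled perturbations built from CGO solutions of the linearized operator $-\nabla\cdot\gamma\nabla+(\sigma+2\mu u)$ via a fixed-point argument. By openness the property persists on a neighborhood of $\bx_0$; covering the compact $\bar\Omega$ by finitely many such neighborhoods and taking the union of the associated families yields one finite family valid everywhere, since appending rows can never lower a rank already equal to $3$. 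This produces $\{g_j\}_{j=1}^J$ making $\cA$ elliptic, and the same construction can be arranged (a further open, hence generic, condition) so that the symbol is injective for all complex $\zeta$ with $\zeta\cdot\zeta\neq0$.

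For the Lopatinskii criterion, since $\cB$ is the identity (full Dirichlet data \eqref{EQ:BC}), I must show that at each boundary point and each tangential $\bxi'\neq0$ the only solution of the half-line problem $\cA_0(\bx_0,\bxi',D_t)\hat v=0$ that decays as $t\to\infty$ and satisfies $\hat v(0)=0$ is trivial. Eliminating $\delta u_j$ through the algebraic data rows, a pure mode $e^{\tau t}$ is a kernel vector of the symbol at $\zeta=\bxi'-\fki\tau\bnu$; by the complex injectivity arranged above this kernel is trivial unless $\zeta\cdot\zeta=|\bxi'|^2-\tau^2=0$, so the admissible roots are only $\tau=\pm|\bxi'|$. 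At the decaying root $\tau=-|\bxi'|$ one has $\zeta\cdot\zeta=0$, and the field rows then read $(\nabla u_j\cdot\zeta)\,\delta\gamma=0$; since the gradients span and $\zeta\neq0$, the $\delta\gamma$-amplitude must vanish, leaving exactly the two-parameter family $(\delta\gamma,\delta\sigma,\delta\mu)=(0,B,C)e^{-|\bxi'|t}$, with $\delta u_j=(p_jB+q_jC)e^{-|\bxi'|t}$, $p_j=-u_j/(\sigma+2\mu u_j)$, $q_j=u_jp_j$. Imposing $\hat v(0)=0$ forces $B=C=0$, and a short check that the $\delta u_j$ obey a clean Helmholtz-type equation rules out polynomial/generalized modes at this root; hence the only admissible solution is trivial, which is the complementing condition.

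The genuinely hard part is the illumination construction of the second paragraph: controlling interior gradients and values of solutions to the \emph{semilinear} forward problem, for which the classical CGO/Runge tools must be adapted to the background solution $u$. Once that analytic input is in hand, the symbol elimination, the counting $J\ge n+2$, and the Lopatinskii mode analysis—which merely reuses the same rank condition at complexified frequencies—are comparatively routine.
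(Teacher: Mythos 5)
Your reduction of ellipticity to the full-rank condition on the $J\times 3$ matrix with rows $\bigl(u_j,\ |u_j|u_j,\ (\sigma+2\mu|u_j|)\nabla u_j\cdot\bxi\bigr)$ (after eliminating $\delta u_j$ through the field rows) is exactly the paper's first step, and your Lopatinskii analysis (eliminate $\delta u_j$ algebraically, find the repeated roots $\pm|\bzeta|$ whose modes carry no $\delta\gamma$ component, plus one extra root) mirrors the paper's as well. But there is a genuine gap at the heart of the argument: the theorem asserts the \emph{existence} of illuminations realizing the rank condition, and you explicitly defer this (``the genuinely hard part'') to an unproved claim that one can prescribe interior values and gradients of solutions to the semilinear equation by perturbing CGO solutions of a linearized operator. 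This is precisely the analytic input the paper supplies: Theorem~\ref{THM:CGO} in Appendix~B constructs CGO solutions $u_k\sim e^{\brho_k\cdot\bx}(1+\varphi_k)$ of the \emph{semilinear} equation~\eqref{EQ:Diff TP} by a Newton--Kantorovich argument around the linear CGO solution, with $\nabla u_k=u_k(\brho_k+O(1))$. The paper then makes the existence claim concrete by an explicit choice $\brho_1=\beta_1(\tau_1\be_1+\fki\wt\tau_1\be_2)$, $\brho_2=\beta_2(\tau_2\be_2+\fki\wt\tau_2\be_3)$, $\brho_3=\beta_3(\tau_3\be_3+\fki\wt\tau_3\be_1)$, $\brho_4=2\brho_3$, and checks by hand that the resulting determinant $\sum_k\alpha_k\brho_k\cdot\bxi$ (with $\alpha_1+\alpha_2+\alpha_3=0$) cannot vanish for all choices simultaneously. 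Without this construction your proof does not establish the existence statement; the openness-and-compactness covering argument you invoke is sound only once you can hit the open condition at each point, which is the unproved step.

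A second, smaller divergence: your dimension count concludes that $J\ge n+2$ rows are needed because you treat the map $\bxi\mapsto G\bxi \bmod \operatorname{span}\{(u_j),(u_j^2)\}$ as a real-linear map into a space of dimension $J-2$. The paper gets away with $J=n+1=4$ precisely because the CGO solutions are complex-valued: the gradient column contributes a \emph{complex} number whose real and imaginary parts each must vanish for degeneracy, so each row carries two real conditions. Your $J\ge n+2$ still satisfies the literal statement ``$J\ge n+1$,'' but it misses the mechanism by which the paper attains the stated count. Finally, in the Lopatinskii step you impose the full Dirichlet data $\hat v(0)=0$ to kill the amplitudes $B,C$ of the $\tau=-|\bxi'|$ modes, whereas the paper's principal boundary symbol (with the weights~\eqref{EQ:D-N gsm} and~\eqref{EQ:Eta gsm}) retains only the condition $\delta\gamma(0)=0$ and disposes of $\delta\sigma,\delta\mu$ through the algebraic rows; you should reconcile your use of the lower-order boundary conditions with the Douglis--Nirenberg principal-part calculus before relying on them.
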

\begin{proof}
Let us first rewrite the principal symbol $\cA_0$ as
\begin{equation*}
	\cA_{0}(\bx,\mathfrak{i}\bxi) = 
\begin{pmatrix}
-\fki \bV_1\cdot \bxi & 0 & 0 & \gamma|\bxi|^2  & \ldots & 0 \\
\fki \frac{\bV_1\cdot \bxi}{\gamma|\bxi|^2} (\sigma + 2\mu|u_1|)u_1 & u_1 & |u_1|u_1 & 0  & \ldots & 0 \\
 \vdots & \vdots & \vdots & \vdots & \vdots & \vdots\\
 -\fki \bV_J\cdot \bxi & 0 & 0 & 0 & \ldots & \gamma|\bxi|^2\\
\fki \frac{\bV_J\cdot \bxi}{\gamma|\bxi|^2} (\sigma + 2\mu|u_J|)u_J  & u_J & |u_J|u_J & 0 & \ldots & 0
\end{pmatrix}
.
\end{equation*}
It is then straightforward to check that $\cA_{0}(\bx,\fki\bxi)$ is of full-rank as long as the following sub-matrix is of full-rank:
\begin{equation*}
\wt{\cA}_{0}(\bx,\fki\bxi) = 
\begin{pmatrix}
	\fki \frac{\bV_1\cdot \bxi}{\gamma |\bxi|^2} (\sigma + 2\mu|u_1|)u_1 & u_1 & |u_1|u_1 \\
 \vdots & \vdots & \vdots \\
\fki \frac{\bV_J\cdot \bxi}{\gamma |\bxi|^2}(\sigma + 2\mu|u_J|)u_J  & u_J & |u_J|u_J 
\end{pmatrix}
.
\end{equation*}

To simplify the calculation, we introduce $\Sigma_j = \sigma + 2\mu |u_j|$, $\widehat{F}_j = \bV_j\cdot \bxi$. We also eliminate the non-zero common factor $\dfrac{\fki}{\gamma|\bxi|^2}$ from the first column and $u_j$ from each row. Without loss of generality, we check the determinant of first $3$ (since $J\ge n+1= 4$) rows of the simplified version of the submatrix $\tilde{\cA}_{0}(\bx,\fki\bxi)$. This determinant is given as
\begin{multline*}
\det(\wt \cA_0) = \widehat{F}_1\frac{\Sigma_{1}}{u_1}(|u_3| - |u_2|) + \widehat{F}_2\frac{\Sigma_{2}}{u_2}(|u_1| - |u_3|) + \widehat{F}_3\frac{\Sigma_{3}}{u_3}(|u_2| - |u_1|)\\
= \frac{\Sigma_{1}\Sigma_{2}\Sigma_{3}}{u_1 u_2 u_3}\left(\widehat{F}_1\frac{u_3 u_2(|u_3| - |u_2|)}{\Sigma_{3} \Sigma_{2} } + \widehat{F}_2\frac{u_1 u_3(|u_1| - |u_3|)}{\Sigma_{1} \Sigma_{3} } + \widehat{F}_3\frac{u_2 u_1(|u_2| - |u_1|)}{\Sigma_{2} \Sigma_{1} }\right).
\end{multline*}

With the assumptions on the background coefficients, we can take $u_j$ to be the complex geometric optics solution constructed following Theorem~\ref{THM:CGO} (in Appendix B) for $\brho_j$ with boundary condition $g_j$. Then we have
\begin{multline*}
\widehat{F}_k\frac{u_i u_j(|u_i| - |u_j|)}{\Sigma_{i} \Sigma_{j} } = \frac{u_i u_j (|u_i|-|u_j|)}{\Sigma_i \Sigma_j} \nabla u_k \cdot \bxi
 = u_i u_j u_k \frac{(|u_i|-|u_j|)}{\Sigma_i \Sigma_j}(\brho_k + O(1))\cdot \bxi .
\end{multline*}
This gives us,
\begin{equation}\label{EQ:DetA}
\det(\wt \cA_0) \sim \Bigl( \Sigma_1(|u_2|-|u_3|)\brho_1 + \Sigma_2(|u_3|-|u_1|)\brho_2 + \Sigma_3(|u_1|-|u_2|)\brho_3 \Bigr) \cdot \bxi .
\end{equation}
Let us define $\alpha_k = \Sigma_k(|u_i|-|u_j|)$ with $(k, i, j)\in\{(1, 2, 3), (2, 3, 1), (3, 1, 2)\}$. Then we have $\alpha_1+\alpha_2+\alpha_3 = 0$. 
Let $(\be_1, \be_2, \be_3)$ an orthonormal basis for $\bbR^3$. Then $\bxi=\sum_{k=1}^3 c_k \be_k$ with $|c_1|^2+|c_2|^2+|c_3|^2 = 1$. We take
\begin{align*}
\brho_1 &= \beta_1 \big(\tau_1 \be_1 + \fki \wt \tau_1\be_2\big),\\
\brho_2 &= \beta_2 \big(\tau_2 \be_2 + \fki \wt \tau_2\be_3\big),\\
\brho_3 &= \beta_3 \big(\tau_3 \be_3 + \fki \wt \tau_3\be_1\big),
\end{align*}
where $|\tau_k| =|\wt\tau_k|$, $\forall 1\le k\le 3$. It is straightforward to verify that $\brho_k\cdot\brho_k=0$, $|\brho_k|=\sqrt{2}|\tau_k||\beta_k|$ for all $1\le k\le 3$. We now deduce from~\eqref{EQ:DetA} that $\det(\wt \cA_0) \sim \square_\cR+\fki \square_\cI$ where
\begin{equation*}
	\square_\cR = \alpha_1\tau_1\beta_1 c_1+\alpha_2\tau_2\beta_2 c_2+\alpha_3\tau_3\beta_3 c_3, \qquad 
	\square_\cI = \alpha_1\wt\tau_1\beta_1 c_2+\alpha_2\wt\tau_2 c_3+\alpha_3\wt\tau_3 \beta_3c_1.
\end{equation*}
Take $\beta_1=\beta_2=\beta_3$, $\tau_k=\wt\tau_k=1$, $1\le k\le 3$. Then $\det(\cA_0)\ne 0$ unless $c_1=c_2=c_3$. [ This is because if $\det(\cA_0)=0$, we have $\square_\cR=0$, $\square_\cI=0$, and $a_1+a_2+a_3=0$. That is
\begin{equation*}
\begin{pmatrix}
	1 & 1 & 1\\
	c_1 & c_2 & c_3\\
	c_2 & c_3 & c_1
\end{pmatrix}
\begin{pmatrix}
	\alpha_1\\
	\alpha_2\\
	\alpha_3
\end{pmatrix}
=
\begin{pmatrix}
	0\\
	0\\
	0
\end{pmatrix}
.
\end{equation*}
This contradicts the construction  of $\{\alpha_k\}_{k=1}^3$]. 
Let us now take
\[
	\brho_4=2\brho_3.
\]
Then the submatrix formed by $u_1$, $u_2$ and $u_4$ will have full rank when $c_1=c_2=c_3$. Therefore the submatrix formed by $u_1$, $u_2$, $u_3$ and $u_4$ is of full rank for any $\bxi$.

To show that $(\cA, \cB)$ satisfies the Lopatinskii criterion given in Definition~\ref{DEF:Lop BC} for a set of well chosen $u_j$, we first observe that since $\cB=\cI$, we have, from the definition in~\eqref{EQ:Eta}, 
\begin{equation}\label{EQ:Eta gsm}
	\{\eta_j\}_{j=1}^{J+3}=\{-1,\cdots,-1\}
\end{equation} 
with the selection of the Douglis-Nirenberg numbers $\{s_i\}_{i=1}^{2J}$ and $\{t_j\}_{j=1}^{J+3}$ in~\eqref{EQ:D-N gsm}, and the principal part of $\cB$ has components $\cB_{0,11}=1$ and $\cB_{0,k\ell}=0$ otherwise. Therefore, the system of differential equations in ~\eqref{EQ:Lop A} and~\eqref{EQ:Lop B} takes the following form
\begin{eqnarray}
\label{EQ:GSM Lin Sys EQ 2 BC} 
	(\bV_j\cdot\bzeta - \fki \bV_j\cdot\bnu\frac{d}{dz})\delta\gamma(z)-\gamma(-|\bzeta|^2 +\frac{d^2}{dz^2})\delta u_j(z) &= 0, & z> 0\\
 \label{EQ:GSM Lin Sys DA 2 BC}  u_j \delta \sigma(z) +  |u_j|u_j \delta \mu(z) + (\sigma + 2\mu |u_j|) \delta u_j(z) &= 0, & z>0\\
 \label{EQ:GSM Lin Sys DA 2 BC3}  \delta\gamma &= 0, & z=0
\end{eqnarray}
where $\gamma$, $\sigma$, $\mu$, $u_j$ and $\bV_j$ ($1\le j\le J$, are all evaluated at $\by\in\partial\Omega$. We first deduce from~\eqref{EQ:GSM Lin Sys DA 2 BC} that, for $1\le j\le J$, 
\begin{equation*}
	\delta u_j = -\frac{u_j}{\Sigma_j} \delta \sigma - \frac{|u_j|u_j}{\Sigma_j}\delta\mu, \quad z>0.
\end{equation*}
Plugging this into~\eqref{EQ:GSM Lin Sys EQ 2 BC}, we obtain that, for $1\leq j \leq J$,
\begin{equation*}
(\bV_j\cdot \bzeta - \fki\bV_j \cdot \bnu \frac{d}{dz}) \delta \gamma + \gamma (-|\bzeta|^2 + \frac{d^2}{dz^2})\left(\frac{u_j}{\Sigma_j}\delta \sigma + \frac{|u_j|u_j}{\Sigma_j}\delta \mu\right)=0, \quad z>0.
\end{equation*}
Without loss of generality, we consider the system formed by $u_1, u_2, u_3$. Let $\wt{F}_j = \bV_j \cdot \bnu$, $p_j=\frac{u_j}{\Sigma_j}$ and $q_j=\frac{u_j^2}{\Sigma_j}$. We look for eigenvalues of the system as the root of
\begin{equation*}
\det
\begin{pmatrix}
\wh{F}_1- \fki\lambda \wt{F}_1 & p_1 \gamma(\lambda^2 -|\bzeta|^2) & q_1 \gamma(\lambda^2 -|\bzeta|^2) \\
\wh{F}_2- \fki\lambda \wt{F}_2 & p_2 \gamma(\lambda^2 -|\bzeta|^2) & q_2 \gamma(\lambda^2 -|\bzeta|^2)\\
\wh{F}_3- \fki\lambda \wt{F}_3 & p_3 \gamma(\lambda^2 -|\bzeta|^2) & q_3 \gamma(\lambda^2 -|\bzeta|^2)
\end{pmatrix}
=0.
\end{equation*}
We observe first that the above equation admits two \emph{repeated} roots $\lambda_{2,3} = \pm |\bzeta|$. Besides that, we have another root
\begin{equation*}
\lambda_1 = -\fki\frac{\wh F_1(p_2q_3-p_3q_2)+\wh F_2(p_3q_1-p_1q_3)+\wh F_3(p_1q_2-p_2q_1)}{\wt F_1(p_2q_3-p_3q_2)+\wt{F}_2(p_3q_1-p_1q_3)+\wt{F}_3(p_1q_2-p_2q_1)}.
\end{equation*}
Moreover, the eigenvectors corresponding to $\lambda_{2,3}$ are of the form
\[
	\boldsymbol \pi_{2,3}=\begin{pmatrix}
	0\\
	x\\
	y
	\end{pmatrix}
\]
with $x$ and $y$ arbitrary. Therefore, $\delta\gamma(z)=ce^{i|\lambda_1|z}$. Using the boundary condition $\delta\gamma(0)=0$ and the decay condition $\delta\gamma(z)\to 0$ as $z\to\infty$, we conclude that $\delta\gamma(z)\equiv 0$. This in turn implies, from~\eqref{EQ:GSM Lin Sys DA 2 BC}, that $\delta\sigma(z)\equiv 0$ and $\delta\mu(z)\equiv 0$. The proof is complete.
\end{proof}

For the set of Douglis-Nirenberg numbers $\{s_i\}$ and $\{t_j\}$ in~\eqref{EQ:D-N gsm}, as well as the parameters $\{\eta_k\}$ given in~\eqref{EQ:Eta gsm}, we defined the function space, parameterized by $\ell>n+\frac{1}{2}$,
\begin{equation*}
\cW_\ell = W^{\ell-s_1,2}(\Omega) \times \ldots \times W^{\ell-s_{2J},2}(\Omega) \times W^{\ell-\eta_1-\frac{1}{2},2}(\partial \Omega)\times \ldots W^{\ell-\eta_{3}-\frac{1}{2},2}(\partial \Omega).
\end{equation*}
We have the following uniqueness and stability result.
\begin{theorem}
Under the same conditions of Theorem~\ref{THM:Ellipticity}, let $\{\delta H_j\}_{j=1}^J$ and $\{\wt{\delta H_j}\}_{j=1}^J$ be the data sets generated with $(\delta\gamma, \delta\sigma, \delta\mu)$ and $(\wt{\delta\gamma}, \wt{\delta\sigma}, \wt{\delta\mu})$ respectively. Assume that the data are such that $(\cS, \phi)\in\cW_\ell$ and $(\wt\cS, \wt\phi)\in\cW_\ell$. Then there exists a set of $J\ge n+1$ boundary illuminations, $\{g_j\}_{j=1}^{J}$, such that $\{\delta H_j\}_{j=1}^J=\{\wt{\delta H_j}\}_{j=1}^J$ (resp. $(\cS, \phi)=(\wt\cS, \wt\phi)$) implies  $(\delta\gamma, \delta\sigma, \delta\mu)=(\wt{\delta\gamma}, \wt{\delta\sigma}, \wt{\delta\mu})$ (resp. $v=\tilde v$) if $\delta\gamma_{|\partial\Omega}=\wt{\delta\gamma}_{|\partial\Omega}$. Moreover, the following stability estimate holds:
\begin{equation}\label{EQ:Stab Lin Sys gsm}
	\sum_{j=1}^{J+3}\Vert v_j-\wt v_j \Vert_{W^{\ell+t_j,2}(\Omega)} \leq C\Bigl(\sum_{i=1}^{2J}\Vert \cS_{i}-\wt\cS_{i}\Vert_{W^{\ell-s_i,2}(\Omega)}+\sum_{k=1}^{3} \Vert \phi_k-\wt\phi_k \Vert_{W^{\ell-\eta_k-\frac{1}{2},2}(\partial\Omega)}\Bigr),
\end{equation}
for all $\ell>n+\frac{1}{2}$.
\end{theorem}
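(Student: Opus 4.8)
The plan is to observe that, by linearity, both the uniqueness claim and the stability estimate~\eqref{EQ:Stab Lin Sys gsm} are two faces of a single a priori bound for the overdetermined operator $(\cA,\cB)$. Writing $w = v - \wt v$, the difference solves $\cA w = \cS - \wt\cS$ in $\Omega$ and $\cB w = \phi - \wt\phi$ on $\partial\Omega$; since matching data ($\delta H_j = \wt{\delta H}_j$ for all $j$) together with $\delta\gamma_{|\partial\Omega} = \wt{\delta\gamma}_{|\partial\Omega}$ forces $(\cS,\phi) = (\wt\cS,\wt\phi)$, the uniqueness statement is exactly~\eqref{EQ:Stab Lin Sys gsm} with vanishing right-hand side. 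Everything therefore reduces to proving~\eqref{EQ:Stab Lin Sys gsm} itself for $w$.

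First I would invoke the regularity theory for redundant elliptic systems in the Douglis--Nirenberg sense~\cite{DoNi-CPAM55,Solonnikov-JSM73,Bal-CM13}. Theorem~\ref{THM:Ellipticity} has already verified that $\cA$ is elliptic for the Douglis--Nirenberg numbers~\eqref{EQ:D-N gsm} and that $(\cA,\cB)$ satisfies the Lopatinskii criterion, which is precisely the hypothesis needed for that theory to deliver the a priori estimate with a compact lower-order remainder,
\[
\sum_{j=1}^{J+3}\|w_j\|_{W^{\ell+t_j,2}(\Omega)} \le C\Bigl(\sum_{i=1}^{2J}\|(\cA w)_i\|_{W^{\ell-s_i,2}(\Omega)} + \sum_{k=1}^{3}\|(\cB w)_k\|_{W^{\ell-\eta_k-\frac{1}{2},2}(\partial\Omega)} + \sum_{j=1}^{J+3}\|w_j\|_{L^2(\Omega)}\Bigr),
\]
valid for all $\ell > n+\frac{1}{2}$. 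The remaining content of the theorem is to discard the remainder $\sum_j\|w_j\|_{L^2(\Omega)}$, which I would do by the standard compactness--uniqueness argument: were the clean estimate to fail, a normalized sequence with source and boundary terms tending to zero would, by the compact embeddings $W^{\ell+t_j,2}(\Omega)\hookrightarrow\hookrightarrow L^2(\Omega)$, subconverge in $L^2$ and hence, by the displayed bound, in the strong norm to a nonzero element of the kernel of $(\cA,\cB)$.

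Thus the step I expect to be the genuine obstacle is injectivity of the homogeneous problem: $\cA w = 0$ in $\Omega$ and $\cB w = 0$ on $\partial\Omega$ must force $w = 0$. Here I would exploit the triangular structure used in the proof of Theorem~\ref{THM:Ellipticity}. Because $g_j \ge \eps > 0$ gives $u_j > 0$ by Theorem~\ref{THM:Positivity}, the homogeneous data equations~\eqref{EQ:GSM Lin Sys DA 2} solve explicitly for $\delta u_j = -(u_j\,\delta\sigma + u_j^2\,\delta\mu)/\Sigma_j$ with $\Sigma_j = \sigma + 2\mu u_j$; substituting into~\eqref{EQ:GSM Lin Sys EQ 2} reduces the kernel equations to an overdetermined family of $J$ second-order elliptic equations for the single triple $(\delta\gamma,\delta\sigma,\delta\mu)$, carrying the homogeneous Dirichlet data $\delta\gamma = \delta\sigma = \delta\mu = 0$ on $\partial\Omega$.

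To close the injectivity argument I would use the two ingredients already assembled for ellipticity: the reduced symbol $\wt\cA_0$ is full rank pointwise --- shown in Theorem~\ref{THM:Ellipticity} by choosing the $u_j$ to be the complex geometric optics solutions of Theorem~\ref{THM:CGO} --- and the boundary ODE system there admits only the trivial solution compatible with $\delta\gamma(0)=0$ and decay into $\Omega$. The full-rank symbol yields interior elliptic regularity and local solvability for $(\delta\gamma,\delta\sigma,\delta\mu)$, while the trivial boundary model supplies vanishing Cauchy-type data; combining these through a unique-continuation argument for the reduced overdetermined system should propagate the boundary vanishing into $\Omega$ and force $(\delta\gamma,\delta\sigma,\delta\mu)\equiv 0$, and then $\delta u_j \equiv 0$ for every $j$. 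Turning the pointwise full-rank condition plus overdetermined boundary data into this global propagation --- rather than merely a Fredholm alternative --- is the delicate point, and is where I expect the real work to lie.
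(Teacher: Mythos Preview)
Your framework for the stability half is exactly the paper's: once injectivity of $(\cA,\cB)$ is established, the Douglis--Nirenberg a priori bound with compact remainder upgrades via compactness--uniqueness to~\eqref{EQ:Stab Lin Sys gsm}. The gap is in your injectivity argument.

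You propose to eliminate $\delta u_j$ and reduce to an overdetermined second-order system for $(\delta\gamma,\delta\sigma,\delta\mu)$ with homogeneous Dirichlet data, then invoke a ``unique-continuation argument for the reduced overdetermined system'' to propagate the boundary vanishing inward. But ellipticity plus Lopatinskii only gives Fredholmness, and you yourself flag that turning this into actual injectivity is ``the delicate point.'' That step is not carried out, and there is no off-the-shelf unique continuation principle for general overdetermined elliptic systems with Dirichlet data alone that closes it. As it stands, your argument establishes finite-dimensional kernel, not trivial kernel.

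The paper takes a different and concrete route for injectivity. Rather than eliminating $\delta u_j$, it eliminates $\delta\sigma$ and $\delta\mu$ from the three homogeneous data equations~\eqref{EQ:GSM Lin Sys DA 2} to obtain a single linear relation $\sum_{(i,j,k)} u_iu_j(u_j-u_i)\Sigma_k\,\delta u_k=0$. Each $\delta u_k$ is then represented through the Green function $G$ of $-\nabla\cdot\gamma\nabla$ applied to $\nabla\cdot(\delta\gamma\nabla u_k)$, so the relation becomes an integral identity in $\delta\gamma$ alone. Using the CGO asymptotics $\nabla u_k\sim u_k(\brho_k+O(1))$ from Theorem~\ref{THM:CGO}, this collapses to
\[
\int_\Omega \delta\gamma(\by)\,\bv(\bx;\by)\cdot\nabla G(\bx;\by)\,d\by=0
\]
for an explicit vector field $\bv$ that, with a suitable choice of the $\brho_k$, is bounded away from zero and lies in $W^{1,\infty}$. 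A short lemma then reads this as saying that the solution of $-\nabla\cdot\gamma\nabla u-\nabla\cdot(\delta\gamma\,\bv)=0$, $u|_{\partial\Omega}=0$ vanishes identically, hence $\nabla\cdot(\delta\gamma\,\bv)=0$ in $\Omega$ with $\delta\gamma|_{\partial\Omega}=0$; this first-order transport equation forces $\delta\gamma\equiv 0$ by DiPerna--Lions type uniqueness. So the paper's injectivity is not an abstract propagation argument but a reduction to a transport equation for $\delta\gamma$, and the CGO solutions are used a second time --- beyond ellipticity --- to manufacture the nonvanishing transport field.
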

\begin{proof}
	We start with the uniqueness result. Let $\delta H_j=0$, $1\le j\le 3$, we then have that
	\[
		u_j \delta\sigma + |u_j|u_j \delta \mu + (\sigma+2\mu|u_j|)\delta u_j=0,\quad 1\le j\le 3.\]
	We can eliminate the variables $\delta\sigma$ and $\delta\mu$ to have, with $\cE=\{(1,2,3),(2,3,1),(3,1,2)\}$,
	\begin{equation}\label{EQ:El}
		\dsum_{(i,j,k)\in \cE} u_iu_j(u_j-u_i)(\sigma+2\mu|u_k|)\delta u_k=0.
	\end{equation}
	Let $G$ be the Green function corresponding to the operator $-\nabla\cdot \gamma\nabla $ with the homogeneous Dirichlet boundary condition. We can then write~\eqref{EQ:El} as, using $\delta\gamma_{|\partial\Omega}=0$ as well as ${\delta u_j}_{|\partial\Omega}=0$,
	\[
		\dsum_{(i,j,k)\in \cE} u_iu_j(u_j-u_i)(\sigma+2\mu|u_k|)\int_\Omega\delta\gamma(\by) \nabla u_k(\by)\cdot\nabla G(\bx;\by)d\by=0.
	\]
	Take $u_k$ to be the complex geometric optics solution we constructed in Theorem~\ref{THM:CGO} with complex vector $\brho_k$, using the fact that $u_k \sim e^{\brho_k\cdot\bx}(1+\varphi_k)$ (and $\varphi_k$ decays as $|\brho_k|^{-1}$) and $\nabla u_k=u_k(\brho_k+O(1))$, we can rewrite the above equation as, for $|\brho_k|$ sufficiently large, 
	\[
		\dsum_{(i,j,k)\in\cE} u_iu_j(u_j-u_i)(\sigma+2\mu|u_k|)\int_\Omega\delta\gamma(\by) u_k(\by)\brho_k\cdot\nabla G(\bx;\by)d\by=0.
	\]
	Even though it is not necessary here, we can select $\brho_k$ such that $\Re\brho_k<0$ and $|\Re\brho_k|$ is sufficiently large to simplify the above equation further to
	\begin{equation}\label{EQ:LL}
		\int_\Omega \delta\gamma(\by) \bv(\bx;\by) \cdot\nabla G(\bx;\by)d\by=0.
	\end{equation}
	with $\bv$ the vector given by
	\begin{equation}\label{EQ:V Field}
		\bv= \dsum_{(i,j,k)\in\cE} \sigma(\bx)\Bigl(u_iu_j(u_j-u_i)\Bigr)(\bx) u_k(\by)\brho_k.
	\end{equation}
	We now need the following lemma.
	\begin{lemma}\label{LMMA:Inv}
		Let $\bv$ be such that: (i) there exists $\mathfrak c>0$ such that $|\bv|\ge \mathfrak c>0$ for a.e. $\bx\in\Omega$; and (ii) $\bv\in [W^{1,\infty}(\Omega)]^n$ at least. Then ~\eqref{EQ:LL} implies that $\delta\gamma\equiv 0$.
	\end{lemma}
	\begin{proof}
	Let $u$ be the solution to 
		\begin{equation}\label{EQ:Inv}
			-\nabla\cdot\gamma\nabla u - \nabla\cdot(\delta\gamma \bv) = 0,\quad 
			\mbox{in}\ \Omega, \qquad u=0,\quad\mbox{on}\ \partial\Omega
		\end{equation}
		Then 
		\[
			u(\bx)=\int_\Omega \delta\gamma(\by) \bv(\bx;\by) \cdot\nabla G(\bx;\by)d\by.
		\]
		Therefore ~\eqref{EQ:LL} implies that $u\equiv 0$. Therefore
		\begin{equation}
			-\nabla\cdot\delta\gamma\bv = 0,\quad \mbox{in}\ \Omega,\qquad \delta\gamma=0,\quad \mbox{on}\ \partial\Omega.
		\end{equation}
		This is a transport equation for $\delta\gamma$ that admits the unique solution $\delta\gamma\equiv 0$ with the vector field $\bv$ satisfying the assumed requirements; see for instance~\cite{BaRe-IP11,BoCr-SIAM06,CoCrRa-CPDE06,DiLi-AM89,Hauray-AIHP03} and references therein.
	\end{proof}
	It is straightforward to check that we can select $\{\brho_j\}_{j=1}^3$ such that the vector field $\bv$ defined in~\eqref{EQ:V Field} satisfies the requirement in Lemma~\ref{LMMA:Inv}. We then conclude that $\delta\gamma\equiv 0$.

	The conditions assumed on the background coefficients ensure the ellipticity of the system as proven in Theorem~\ref{THM:Ellipticity}. The stability result then follows from~\eqref{EQ:Stab D-N Sys}; see more discussions in~\cite{Bal-CM13} and references therein. Note that the simplification inthe last term in~\eqref{EQ:Stab Lin Sys gsm} is due to the fact that $\phi_k=\wt\phi_k=0$ when $4\le k\le J+3$. Note also that the following simplification can be made in~\eqref{EQ:Stab Lin Sys gsm}:
\[
	\sum_{i=1}^{2J}\Vert \cS_{i}-\wt\cS_{i}\Vert_{W^{\ell-s_i,2}(\Omega)}\le 2 \sum_{i=1}^{J}\Vert \delta H_{j}-\wt{\delta H}_j \Vert_{W^{\ell+2,2}(\Omega)}.
\]
The proof is complete.
\end{proof}

\section{Numerical simulations}
\label{SEC:Num}

We present in this section some preliminary numerical reconstruction results using synthetic internal data. We restrict ourselves to two-dimensional settings only to simplify the computation. The spatial domain of the reconstruction is the square $\Omega = (−1, 1)^2$. 
All the equations in $\Omega$ are discretized with a first-order finite element method on triangular meshes. In all the simulations in this section, reconstructions are performed on a finite element mesh with about $6000$ triangular elements. The nonlinear system resulted from the discretization of the diffusion equation~\eqref{EQ:Diff TP} is solved using a quasi-Newton method as implemented in~\cite{ReBaHi-SIAM06}.

To generate synthetic data for inversion, we solve~\eqref{EQ:Diff TP} using the true coefficients. We performed reconstructions using both noiseless and noisy synthetic data.
For the noisy data, we added random noise to the data by simply multiplying each datum by $(1 + \sqrt{3}\epsilon \times 10^{-2} \texttt{random})$ with \texttt{random} a uniformly distributed random variable taking values in $[−1, 1]$, $\epsilon$ being the noise level (i.e. the size of the variance in percentage).

We will focus on the reconstruction of the absorption coefficients $\sigma$ and $\mu$. We present reconstruction results from two different numerical methods. 

\paragraph{Direct Algorithm.} The first method we use is motivated from the method of proofs of Propositions~\ref{PROP:Stab Single Coeff} and ~\ref{PROP:Stab Two Coeff}. When we have $J\geq 2$ data sets $\{H_j\}_{j=1}^J$ from $J$ illuminations $\{g_j\}_{j=1}^J$, we first reconstruct, for each $j$, $u_j^*$ as the solutions to
\begin{equation*}
-\nabla\cdot(\gamma\nabla u_j^*) = -\frac{H_j^*}{\Gamma}\quad \text{in}\ \Omega,\qquad 
u_j^*= g_j\quad \text{on} \ \partial\Omega .
\end{equation*}
We then reconstruct $\sigma + \mu |u_j^*|=\frac{H_j}{\Gamma u_j^*}$. Collecting this quantity from all data, we have, for each point $\bx\in\Omega$,
\begin{equation*}
\begin{pmatrix}
1 & |u_1^*|\\
\vdots & \vdots \\
1 & |u_J^*|
\end{pmatrix}
\begin{pmatrix}
\sigma\\ \mu
\end{pmatrix}
=
\begin{pmatrix}
\frac{H_1^*}{\Gamma u_1^*}\\
\vdots \\
\frac{H_J^*}{\Gamma u_J^*}
\end{pmatrix}.
\end{equation*}
We then reconstruct $(\sigma, \mu)$ by solving this small linear system, in least square sense, at each point $\bx\in\Omega$. Therefore, the main computational cost of this algorithm lies in the numerical solution of the $J$ linear equations for $\{u_j^*\}$.

\paragraph{Least-Square Algorithm.} The second reconstruction method that we will use is based on numerical optimization. This method searches for the unknown coefficient by minimizing the objective functional
\begin{equation}
\Phi (\sigma,\mu) \equiv \frac{1}{2} \sum_{j=1}^{J}\int_{\Omega} (\Gamma\sigma u_j + \Gamma \mu |u_j|u_j - H_j^*)^2 d\bx + \kappa R(\sigma,\mu) ,\label{eq:data_misfit}
\end{equation}
where we use the functional $R(\sigma,\mu) = \frac{1}{2}\left(\int_{\Omega}|\nabla \sigma|^2 d\bx + \int_{\Omega}|\nabla \mu|^2 d\bx \right)$ together with the parameter $\kappa$ to add regularization mechanism in the reconstructions. We use the BFGS quasi-Newton method that we developed in~\cite{ReBaHi-SIAM06} to solve this minimization problem. It is straightforward to check, following Proposition~\ref{THM:Differentiability}, that the gradient of the functional $\Phi(\sigma,\mu)$ with respect to $\sigma$ and $\mu$ are given respectively by
\begin{align}
\Phi'_{\sigma} [\sigma,\mu](\delta \sigma) & =  \int_{\Omega}\left\{ \sum_{j=1}^{J} \Big[z_j \Gamma u_j+ v_j u_j\Big]\delta\sigma + \kappa \nabla \sigma \cdot \nabla \delta\sigma \right\}d\bx\\
\Phi'_{\mu} [\sigma,\mu](\delta\mu) & =  \int_{\Omega}\left\{ \sum_{j=1}^{J}\Big[z_j \Gamma |u_j|u_j + v_j |u_j|u_j\Big]\delta\mu + \kappa\nabla \mu \cdot \nabla\delta\mu\right\}d\bx
\label{eq:derivative}
\end{align}
where $z_j=\Gamma(\sigma u_j + \mu |u_j|u_j) - H_j^*$ and $v_j$ solves
\begin{equation}\label{EQ:Adj}
-\nabla \cdot \gamma \nabla v_j + (\sigma+ 2\mu |u_j|) v_j = - z_j \Gamma (\sigma + 2\mu |u_j|),\quad \text{in}\ \Omega,\qquad v_j = 0,\quad \text{on} \ \partial \Omega
\end{equation} 
Therefore, in each iteration of the optimization algorithm, we need to solve $J$ semilinear diffusion equations for $\{u_j\}_{j=1}^J$ and then $J$ adjoint linear elliptic equations for $\{v_j\}_{j=1}^J$ to evaluate the gradients of the objective function with respect to the unknowns.
\begin{figure}[htbp]
\centering
\includegraphics[width=0.3\textwidth]{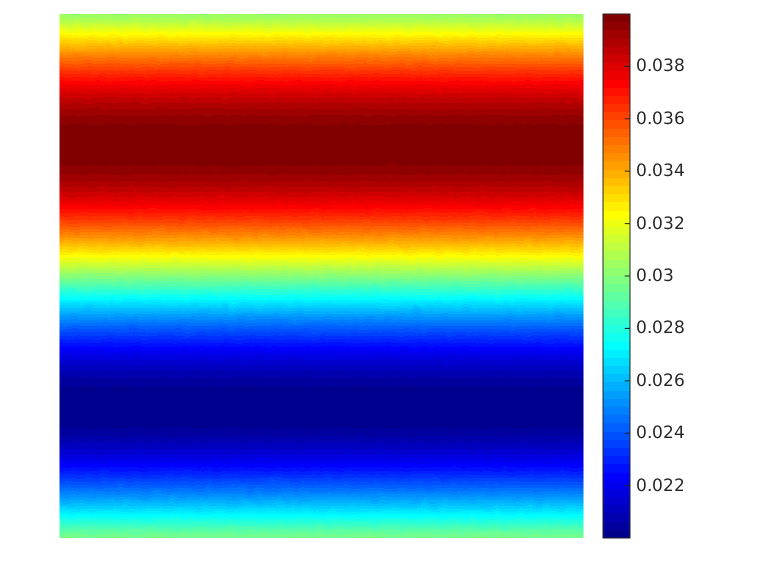}
\includegraphics[width=0.3\textwidth]{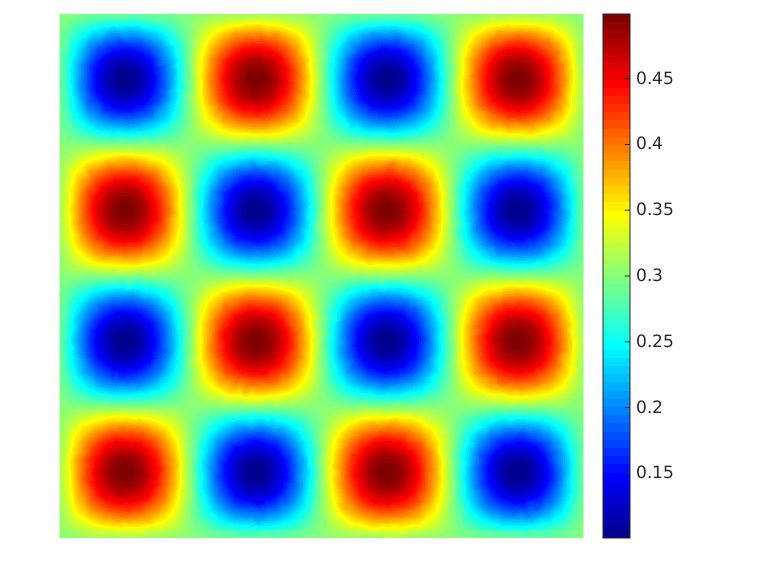}
\includegraphics[width=0.3\textwidth]{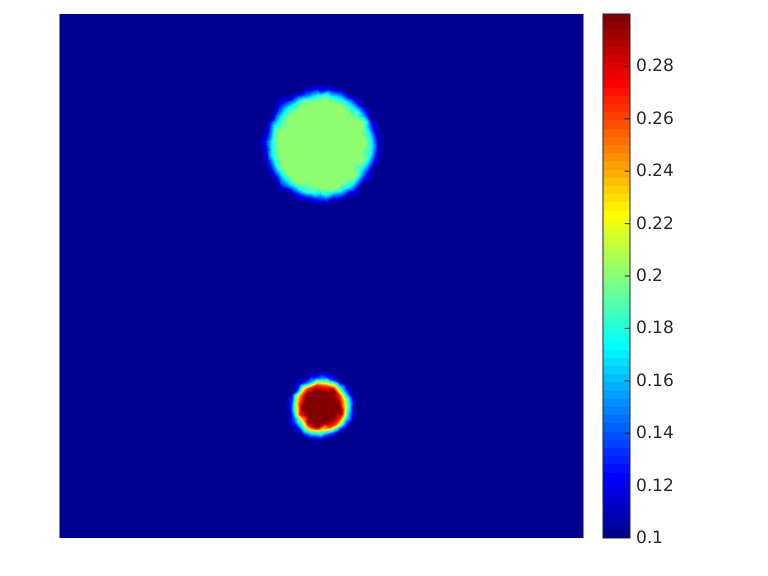}
\caption{The true coefficients, $\gamma$ (left), $\sigma$ (middle), $\mu$ (right), used to generate synthetic data for the reconstructions.}
\label{FIG:True Coeff}
\end{figure}

\begin{figure}[htbp]
\centering
\includegraphics[width=0.23\textwidth]{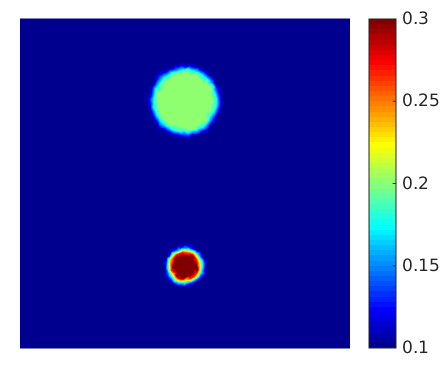}
\includegraphics[width=0.23\textwidth]{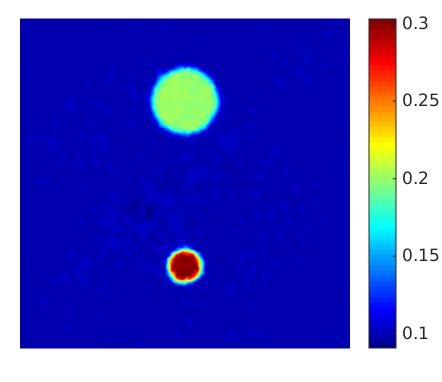}
\includegraphics[width=0.23\textwidth]{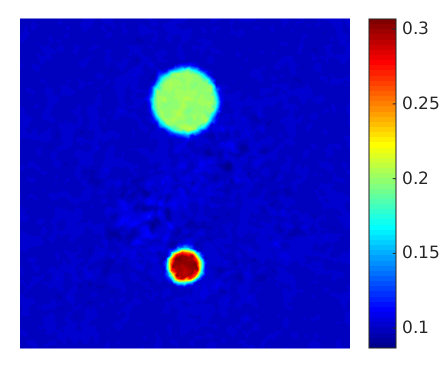}
\includegraphics[width=0.23\textwidth]{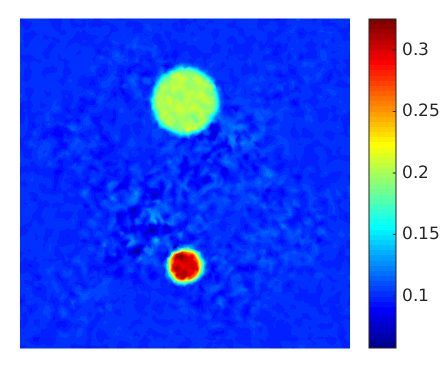}
\caption{The absorption coefficient $\mu$ reconstructed using synthetic data containing different levels ($\epsilon=0,1,2,5$ from left to right) of noises. The \emph{Direct Algorithm} is used in the reconstructions.}
\label{FIG:Mu Direct}
\end{figure}
\paragraph{Experiment I.} We start with a set of numerical experiments on the reconstruction of the two-photon absorption coefficient $\mu$ assuming that the single-photon absorption coefficient $\sigma$ is known. We use data collected from four different sources $\{g_j\}_{j=1}^4$, $\{H_j\}_{j=1}^4$. We perform reconstructions using the \emph{Direct Algorithm}. In Fig.~\ref{FIG:Mu Direct} we show the reconstruction results from noisy synthetic data with noise levels $\epsilon=0$, $\epsilon=1$, $\epsilon=2$, and $\epsilon=5$. The true coefficients used to generate the data are shown in Fig.~\ref{FIG:True Coeff}.

To measure the quality of the reconstruction, we use the relative $L^2$ error. This error is defined as the ratio between (i) the $L^2$ norm of the difference between the reconstructed coefficient and the true coefficient and (ii) the $L^2$ norm of the true coefficient, expressed in percentage. The relative $L^2$ errors in the reconstructions of $\mu$ in Fig~\ref{FIG:Mu Direct} are $0.00\%$, $2.45\%$,$4.98\%$, and $12.23\%$ for $\epsilon=0$, $\epsilon=1$, $\epsilon=2$ and $\epsilon=5$ respectively.

\begin{figure}[htbp]
\centering
\includegraphics[width=0.23\textwidth]{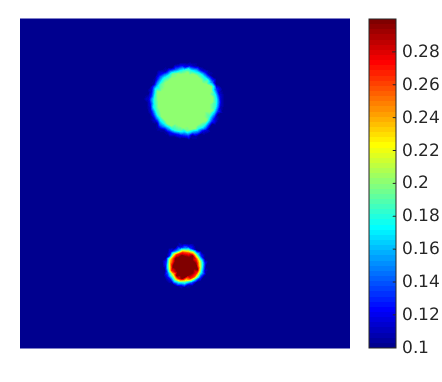}
\includegraphics[width=0.23\textwidth]{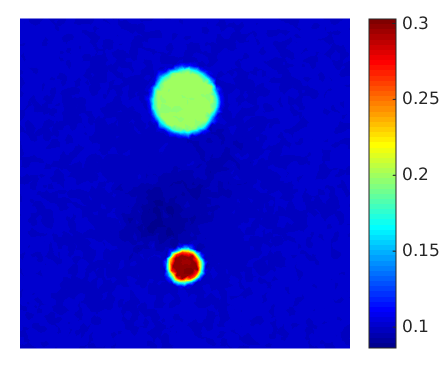}
\includegraphics[width=0.23\textwidth]{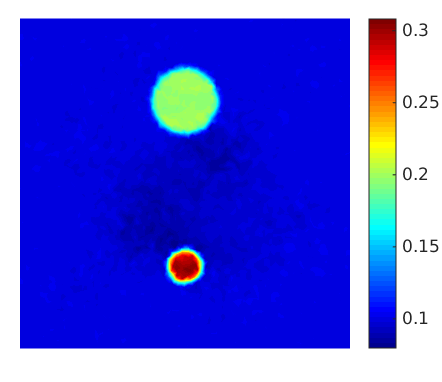}
\includegraphics[width=0.23\textwidth]{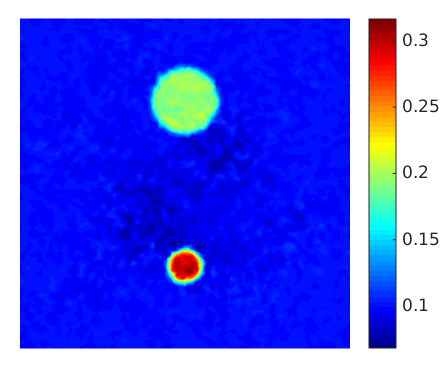}
\caption{Same as in Fig.~\ref{FIG:Mu Direct} except that the reconstructions are performed with the \emph{Least-Square Algorithm}.}
\label{FIG:Mu Minimiz}
\end{figure}
\paragraph{Experiment II.} One of the main limitations on the \emph{Direct Algorithm} is that it requires the use of illumination sources that are positive everywhere on the boundary. This is difficult to implement in practical applications. The \emph{Least-Square Algorithm}, however, does not have such requirement on the optical sources (but it is computationally more expensive). Here we repeat the simulations in Experiment I with the \emph{Least-Square Algorithm}. The reconstruction results are shown in Fig~\ref{FIG:Mu Minimiz}. We observe that, with the same (not exactly the same since the realizations of the noise are different) data sets, the reconstructions from the two different algorithms are of very similar quality. The relative $L^2$ errors for the reconstructions in Fig~\ref{FIG:Mu Minimiz} are $0.00\%$, $2.44\%$,$4.62\%$, and $9.36\%$ respectively for the four cases.

\begin{figure}[htbp]
\centering
\includegraphics[width=0.23\textwidth]{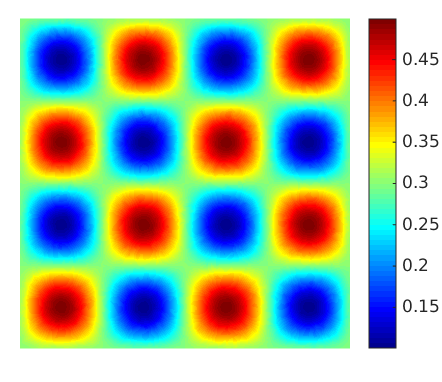}
\includegraphics[width=0.23\textwidth]{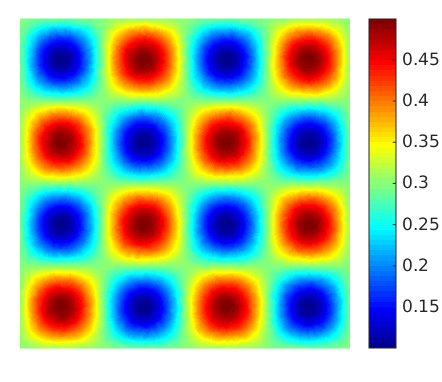}
\includegraphics[width=0.23\textwidth]{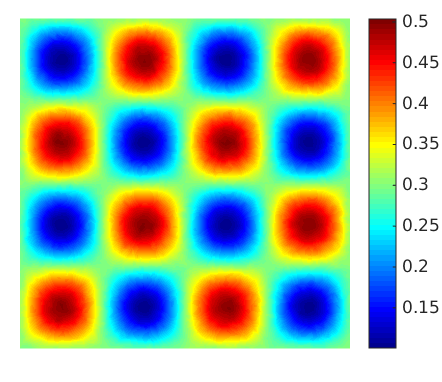}
\includegraphics[width=0.23\textwidth]{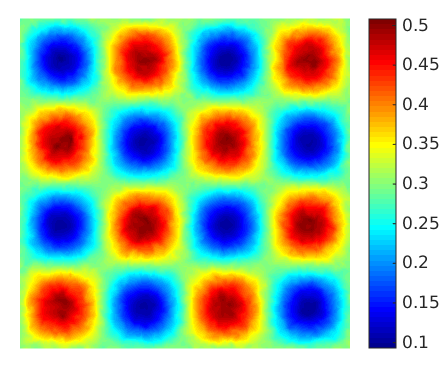}\\
\includegraphics[width=0.23\textwidth]{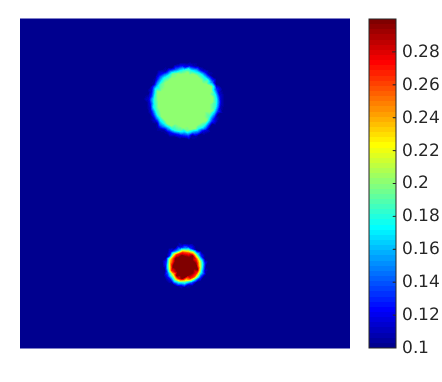}
\includegraphics[width=0.23\textwidth]{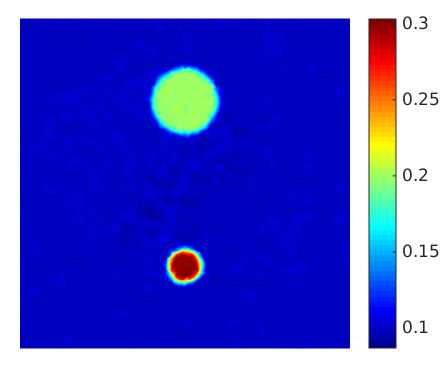}
\includegraphics[width=0.23\textwidth]{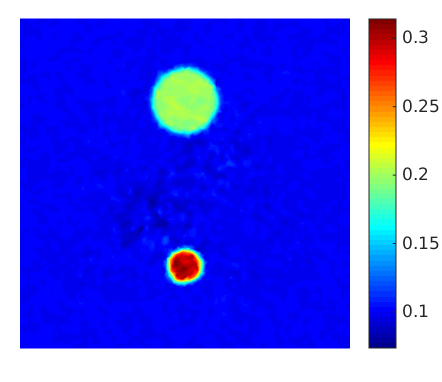}
\includegraphics[width=0.23\textwidth]{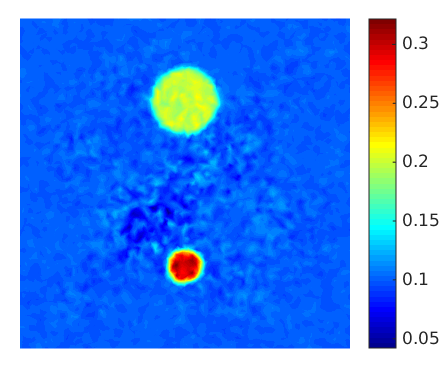}
\caption{The absorption coefficient pair $\sigma$ (top row) and $\mu$ (bottom row) reconstructed using the \emph{Direct Algorithm} with data at different noise levels ($\epsilon=0,1,2,5$ from left to right).}
\label{FIG:Sigma Mu Direct}
\end{figure}
\paragraph{Experiment III.} In the third set of numerical experiments, we study the simultaneous reconstructions of the single-photon and two-photon absorption coefficients, $\sigma$ and $\mu$. We again use data collected from four different sources. In Fig.~\ref{FIG:Sigma Mu Direct}, we show the reconstructions from data containing different noise levels using the \emph{Direct Algorithm}. The relative $L^2$ error in the reconstructions of $(\sigma, \mu)$ are $(0.00\%,0.00\%)$, $(0.79\%,2.76\%)$,$(1.56\%,5.55\%)$, and $(3.91\%,13.71\%)$ respectively for data with noise levels $\epsilon=0$, $\epsilon=1$, $\epsilon=2$ and $\epsilon=5$.

\begin{figure}[htbp]
\centering
\includegraphics[width=0.23\textwidth]{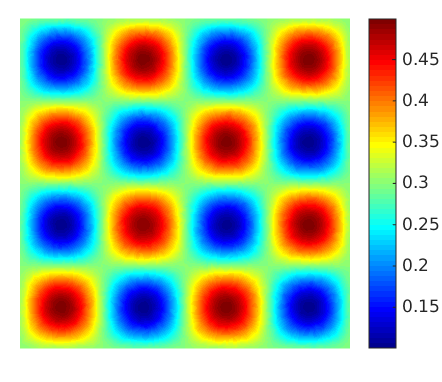}
\includegraphics[width=0.23\textwidth]{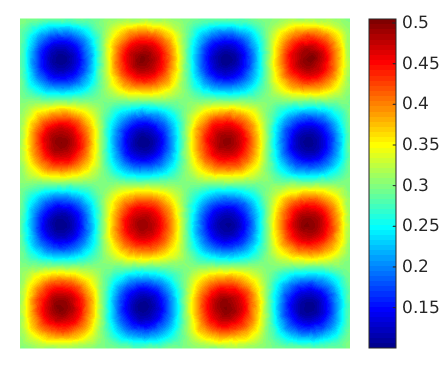}
\includegraphics[width=0.23\textwidth]{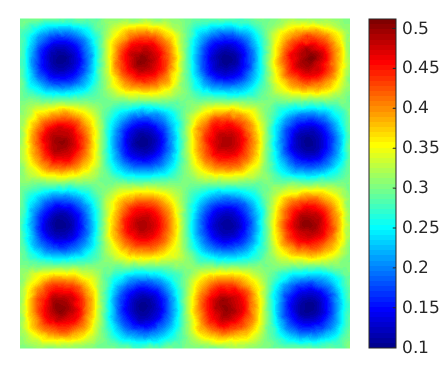}
\includegraphics[width=0.23\textwidth]{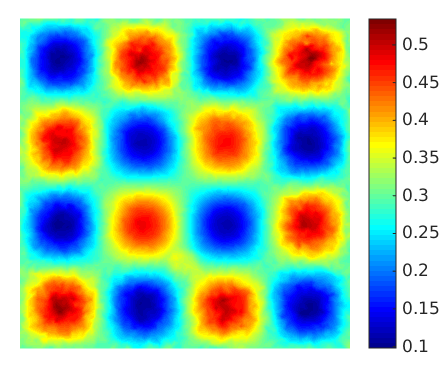}\\
\includegraphics[width=0.23\textwidth]{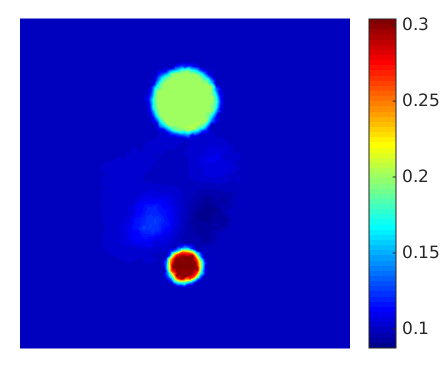}
\includegraphics[width=0.23\textwidth]{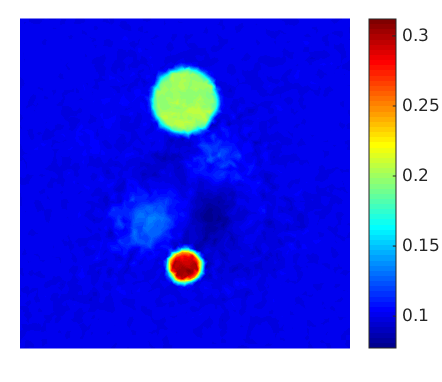}
\includegraphics[width=0.23\textwidth]{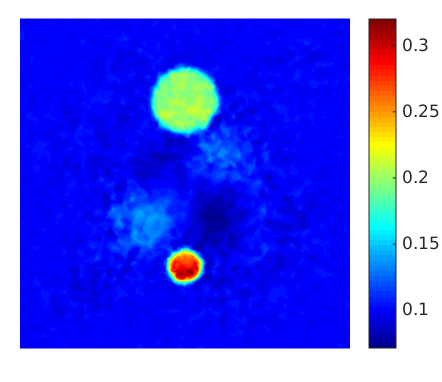}
\includegraphics[width=0.23\textwidth]{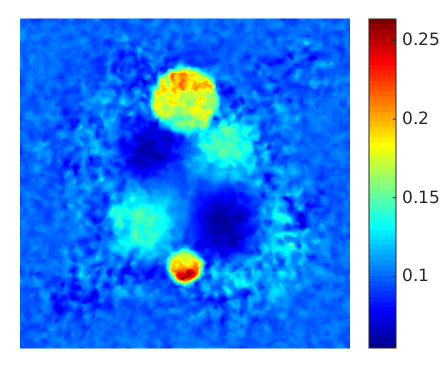}
\caption{The same as in Fig.~\ref{FIG:Sigma Mu Direct} except that the reconstructions are performed using the \emph{Least-Square Algorithm}.}
\label{FIG:Sigma Mu Minimiz}
\end{figure}
\paragraph{Experiment IV.} We now repeat the simulations in Experiment III with the \emph{Least-Square Algorithm}. The results are shown in Fig.~\ref{FIG:Sigma Mu Minimiz}. The relative $L^2$ errors in the reconstructions are now $(0.22\%,2.38\%)$, $(1.21\%,6.43\%)$,$(2.34\%,10.98\%)$, $(5.64\%,22.06\%)$ respectively for data with noise levels $\epsilon=0$, $\epsilon=1$, $\epsilon=2$, and $\epsilon=5$. The quality of the reconstructions is slightly lower than, but still comparable to, that in the reconstructions in Experiment III.

We observe from the above simulation results that, in general, the quality of the reconstructions is very high. When we have the illumination sources that satisfy the positivity requirement on the whole boundary of the domain, the \emph{Direct Algorithm} provides an efficient and robust reconstruction method. The \emph{Least-Square Algorithm} is less efficient but is as robust in terms of the quality of the reconstructions. The reconstructions with the \emph{Least-Square Algorithm} are done for a fixed regularization parameter that we selected by a couple of trial-error test. It is by no means the optimal regularization parameter that can be selected through more sophisticated algorithms~\cite{EnHaNe-Book96}. However, this is an issue that we think is not important at the current stage of this project. Therefore, we did not pursue further in this direction.

We also observe that the performances of the reconstructions of $\sigma$ and $\mu$ are different in both the direct and the least-square methods: the reconstructions of $\sigma$ is visually better than those of $\mu$ in general. This phenomenon is not manifested in our current stability results. Further analysis, for instance to refine the stability results in Section~\ref{SEC:Absorption}, are needed to fully understand this difference in numerical stability.

\section{Conclusion and further remarks}
\label{SEC:Concl}

We studied in this paper inverse problems in quantitative photoacoustic tomography with two-photon absorption. We derived uniqueness and stability results in the reconstruction of single-photon and two-photon absorption coefficients, and proposed explicit reconstruction methods in this case with well-selected illumination sources. We also studied the inverse problem of reconstructing the diffusion coefficient in addition to the absorption coefficients and obtained partial results on the uniqueness and stability of the reconstructions for the linearized problem. We presented some numerical studies based on the explicit reconstruction procedures as well as numerical optimization techniques to demonstrate the type of quality that can be achieved in reasonably controlled environments (where noise strength in the data is moderate).

Our focus in this paper is to study the mathematical properties of the inverse problems. There are many issues that have to be address in the future. Mathematically, it would be nice to generalize the uniqueness and stability results in Section~\ref{SEC:MultiPara}, on multiple coefficient reconstructions in linearized settings, to the fully nonlinear problem. Computationally, detailed numerical analysis, in three-dimensional setting, need to be performed to quantify the errors in the reconstructions in practically relevant scenarios. It is especially important to perform reconstructions starting from acoustic data directly, following for instance the one-step reconstruction strategy developed in~\cite{DiReVa-IP15}, to see how sensitive the reconstruction of the two-photon absorption coefficient is with respect to noise in the acoustic data. On the modeling side, it is very interesting to see if the current study can be generalized to radiative transport type models for photon propagation.

\section*{Acknowledgments}

We would like to the anonymous referees for their useful comments that help us improve the quality of the paper. This work is partially supported by the National Science Foundation through grants DMS-1321018 and DMS-1620473.

\section*{Appendix A: Terminologies in overdetermined elliptic systems}

We recall here, very briefly, some terminologies and notations related to overdetermined linear elliptic systems, following the presentation in~\cite{Bal-CM13,WiSc-IP15}. Let $M, \wt M, N$ be three positive integers such that $M>N$. we consider the following system of $M$ differential equations for $N$ variables $\{v_1,\cdots,v_N\}$ with $\wt M$ boundary conditions:
\begin{eqnarray}
\label{EQ:Elliptic Sys}
\cA(\bx,D) v &=& \cS, \quad \text{in}\ \Omega \\
\label{EQ:Elliptic Sys BC}
\cB(\bx,D)v &=& \phi, \quad \text{on}\ \partial \Omega
\end{eqnarray}
Here $\cA(\bx,D)$ is a matrix differential operator whose $(i, j)$ element, denoted by $\cA_{ij}(\bx, D)$ ($1\le i\le M$, $1\le j\le N$), is a polynomial in $D$ for any $\bx\in\Omega$. $\cB(\bx,D)$ is a matrix differential operator whose $(k, \ell)$ element, denoted by $\cB_{k\ell}(\bx, D)$ ($1\le k\le \wt M$, $1\le \ell \le N$), is a polynomial in $D$ for any $\bx\in\partial\Omega$.

We associate an integer $s_i$ ($1\le i\le M$) to row $i$ of $\cA$ and an integer $t_j$ to column $j$ ($1\le j\le N$) of $\cA$.
\begin{definition}
We call the integers $\{s_i\}_{i=1}^M$ and $\{t_j\}_{j=1}^N$ the Douglis-Nirenberg numbers associated to $\cA$ if: (a) $s_i\le 0$, $1\le i\le M$; (b) when $s_i+t_j\ge 0$, the order of $\cA_{ij}(\bx,D)$ is not greater than $s_i+t_j$; and (c) when $s_i+t_j<0$, $\cA_{ij}(\bx,D)=0$.
\end{definition}

\begin{definition}
The principal part of $\cA$, denoted by $\cA_0$, is defined as the part of $\cA$ such that the degree of $\cA_{0,ij}(\bx,D)$ is exactly $s_i+t_j$.
\end{definition}

We say that $\cA$ is elliptic, in the sense of Douglis-Nirenberg, if the matrix $\cA_0(\bx,\bxi)$ is of rank $N$ for all $\bxi \in\bbS^{n-1}$ ($\bbS^{n-1}$ being the unit sphere in $\bbR^n$) and all $\bx\in \Omega$.

Let $b_{k\ell}$ be the order of $\cB_{k\ell}$ and define
\begin{equation}\label{EQ:Eta}
	\eta_k=\max_{1\le \ell\le N}(b_{k\ell}-t_\ell),\ \ 1\le k\le \wt M.
\end{equation}
\begin{definition}
The principal part of $\cB$, denoted by $\cB_0$, is defined as the part of $\cB$ such that the order of $\cB_{0,k\ell}$ is exactly $\eta_k+t_\ell$.
\end{definition}

Let $\cB_{0}(\bx,D)$ be the principal part of $\cB$. Fix $\by\in \partial \Omega$, and let $\bnu$ be the \emph{inward} unit normal vector at $\by$. Let $\boldsymbol \zeta \in \bbS^{n-1}$ be a vector such that $\boldsymbol\zeta\cdot\bnu=0$ and $|\boldsymbol\zeta|\neq 0$. We consider on the half line $\by+z\bnu, z>0$ the system of ordinary equations
\begin{eqnarray}
\label{EQ:Lop A} \mathcal{A}_{0}(\by, \mathfrak i\boldsymbol\zeta + \bnu \frac{d}{dz}) \wt{u}(z) &= 0,\quad z>0,\\
\label{EQ:Lop B} \mathcal{B}_{0}(\by, \mathfrak i\boldsymbol\zeta + \bnu \frac{d}{dz}) \wt{u}(z) &= 0,\quad z=0.
\end{eqnarray}
\begin{definition}\label{DEF:Lop BC}
	If for any $\by \in \partial \Omega$, the only solution to the system~\eqref{EQ:Lop A}-\eqref{EQ:Lop B} such that $\wt{u}(z)\to 0$ as $z\to \infty$ is $\wt u\equiv 0$, then we say that $(\mathcal{A},\mathcal{B})$ satisfies the Lopatinskii criterion.
\end{definition}

It is well-established that~\cite{Bal-CM13,Solonnikov-JSM73,WiSc-IP15} when $(\cA, \cB)$ satisfies the Lopatinskii criterion, the system~\eqref{EQ:Elliptic Sys}-\eqref{EQ:Elliptic Sys BC} can be solved up to possibly a finite dimensional subspace. Moreover, a general \emph{a priori} stability estimate can be established for the system. Define the function space
\begin{equation*}
	\cW_\ell = W^{\ell-s_1,2}(\Omega) \times \ldots \times W^{\ell-s_{M},2}(\Omega) \times W^{\ell-\eta_1-\frac{1}{2},2}(\partial \Omega)\times \ldots W^{\ell-\sigma_{\wt M}-\frac{1}{2},2}(\partial \Omega),
\end{equation*}
for some $\ell>n+\frac{1}{2}$. Then it can be shown that, if $(\cS,\phi)\in\cW_\ell$,
\begin{equation}\label{EQ:Stab D-N Sys}
	\sum_{j=1}^{N}\Vert v_j \Vert_{W^{\ell+t_j,2}(\Omega)} \leq C \Bigl(\sum_{i=1}^{M}\Vert \cS_{i}\Vert_{W^{\ell-s_i,2}(\Omega)} + \sum_{i=1}^{\wt M} \Vert \phi_i \Vert_{W^{\ell-\eta_i-\frac{1}{2},2}(\partial \Omega)}\Bigr) + \wt C \sum_{t_j>0} \Vert v_j \Vert_{L^2(\Omega)},
\end{equation}
provided that all the quantities involved are regular enough. The last term in the estimate can be dropped when uniqueness of the solution can be proven. More details of this theory can be found in~\cite{Bal-CM13} and references therein.

\section*{Appendix B: CGO solutions to equation~\eqref{EQ:Diff TP}}

This appendix is devoted to the construction of complex geometric optics (CGO) solutions~\cite{SyUh-AM87,Uhlmann-IP09} to our model equation~\eqref{EQ:Diff TP}. We restrict the construction to the three-dimensional setting ($n=3$). We start by revisiting CGO solutions to the classical diffusion problem that was first developed in~\cite{SyUh-AM87}:
\begin{equation}\label{EQ:Diff SP}
- \nabla \cdot (\gamma \nabla u) + \sigma u = 0, \quad \text{in}\ \Omega
\end{equation}
with the assumption that $\gamma \in \cC^2(\Omega)$ and $\sigma \in \cC^1(\Omega)$. Let $u_{*}$ be a solution to this equation, then the Liouville transform defined in~\eqref{EQ:Liouville} shows that $\tilde{u}_{*}=\sqrt{\gamma}u_{*}$ solves
\begin{equation}\label{EQ:Diff SP Liouville}
	\Delta \tilde{u}_{*} - \Bigl(\frac{\Delta \sqrt{\gamma}}{\sqrt{\gamma}} + \frac{\sigma}{\gamma}\Bigr)\tilde{u}_{*} = 0, \quad \mbox{in}\ \Omega.
\end{equation}
The following result is well-known.
\begin{theorem}[\cite{Bal-Notes12,SyUh-AM87,Uhlmann-IP09}]\label{THM:CGO SP}
Let $\gamma \in \cC^4(\Omega)$ and $\sigma \in \cC^2(\Omega)$. For any $\brho\in \bbC^{n}$ such that $\brho\cdot\brho = 0$ and $|\brho|$ sufficiently large, there is a function $g$ such that the solution to~\eqref{EQ:Diff SP Liouville}, with the boundary condition ${\tilde u_{*}}|_{\partial\Omega}= g$, takes the form
\begin{equation}\label{EQ:CGO}
	\tilde{u}_{*} = e^{\brho\cdot \bx}(1+\varphi(\bx)),
\end{equation}
with $\varphi(\bx)$ satisfying the estimate
\begin{equation}\label{EQ:CGO phi}
	|\brho| \Vert\varphi\Vert_{W^{2,2}(\Omega)} + \Vert\varphi\Vert_{W^{3,2}(\Omega)} \leq C \biggl\Vert \frac{\Delta \sqrt{\gamma}}{\sqrt{\gamma}} +\frac{\sigma}{\gamma} \biggr\Vert_{W^{2,2}(\Omega)}.
\end{equation}
\end{theorem}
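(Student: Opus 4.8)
The plan is to follow the original Sylvester--Uhlmann construction~\cite{SyUh-AM87,Bal-Notes12}, reducing the problem to inverting a constant-coefficient operator on weighted Sobolev spaces and then treating the zeroth-order coefficient $q:=\dfrac{\Delta\sqrt\gamma}{\sqrt\gamma}+\dfrac{\sigma}{\gamma}$ as a perturbation. Since $\gamma\in\cC^4(\Omega)$ and $\sigma\in\cC^2(\Omega)$, we have $q\in W^{2,2}(\Omega)$; first I would extend $q$ to a compactly supported $\tilde q\in W^{2,2}(\bbR^n)$ with $\Vert\tilde q\Vert_{W^{2,2}(\bbR^n)}\le C\Vert q\Vert_{W^{2,2}(\Omega)}$, and seek a global solution $\tilde u_*=e^{\brho\cdot\bx}(1+\varphi)$ on $\bbR^n$ whose restriction to $\Omega$ furnishes the desired CGO solution with $g:=\tilde u_*|_{\partial\Omega}$. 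Substituting this ansatz into $\Delta\tilde u_*-\tilde q\tilde u_*=0$ and using $\brho\cdot\brho=0$ to cancel $\Delta e^{\brho\cdot\bx}$, the exponential factors out and $\varphi$ is seen to solve
\[
\Delta_\brho\varphi-\tilde q\varphi=\tilde q,\qquad \Delta_\brho:=\Delta+2\,\brho\cdot\nabla=e^{-\brho\cdot\bx}\,\Delta\, e^{\brho\cdot\bx}.
\]

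The core step is the construction of a right inverse $G_\brho$ for $\Delta_\brho$. On the Fourier side $\Delta_\brho$ has symbol $-|\bxi|^2+2\fki\,\brho\cdot\bxi$, which vanishes on a set of codimension two; following~\cite{SyUh-AM87,Bal-Notes12} I would define $G_\brho$ via the Faddeev Green's function and establish the two mapping estimates that drive the argument. The first is the gain in $|\brho|$,
\[
\Vert G_\brho f\Vert_{L^2_\delta(\bbR^n)}\le\frac{C}{|\brho|}\,\Vert f\Vert_{L^2_{\delta+1}(\bbR^n)},\qquad -1<\delta<0,
\]
in weighted $L^2$ spaces with weight $(1+|\bx|^2)^{\delta/2}$; the second is the elliptic two-derivative gain $\Vert G_\brho f\Vert_{W^{s+2,2}_\delta}\le C\Vert f\Vert_{W^{s,2}_{\delta+1}}$, which holds because $\Delta_\brho$ shares the principal symbol of $\Delta$. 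Deriving these bounds --- in particular extracting the decay $1/|\brho|$ simultaneously with control near the singular set of the symbol --- is the main obstacle, and is precisely the technical heart of~\cite{SyUh-AM87}.

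Granting $G_\brho$, I would recast the equation as the fixed-point problem $(I-G_\brho M_{\tilde q})\varphi=G_\brho\tilde q$, where $M_{\tilde q}$ denotes multiplication by $\tilde q$. Because $\tilde q$ is compactly supported, $M_{\tilde q}$ maps $L^2_\delta$ into $L^2_{\delta+1}$ boundedly, so the first estimate gives $\Vert G_\brho M_{\tilde q}\Vert_{L^2_\delta\to L^2_\delta}\le C|\brho|^{-1}\Vert\tilde q\Vert_{L^\infty}<1$ once $|\brho|$ is large enough. A Neumann series then produces a unique $\varphi\in L^2_\delta$ together with the base estimate $\Vert\varphi\Vert_{L^2_\delta}\le C|\brho|^{-1}\Vert\tilde q\Vert_{L^2_{\delta+1}}$.

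Finally I would bootstrap regularity. Starting from $\varphi\in L^2_\delta$ and using $W^{2,2}(\bbR^3)\hookrightarrow L^\infty$ (here $n=3$), the right-hand side $\tilde q+\tilde q\varphi$ of $\varphi=G_\brho(\tilde q+\tilde q\varphi)$ lies in $L^2_{\delta+1}$; a finite bootstrap with the two-derivative gain, together with the algebra property of $W^{2,2}(\bbR^3)$, upgrades $\varphi$ to $W^{3,2}_\delta$. Carrying the factor $|\brho|^{-1}$ from the first estimate through the lower-order norms while controlling the top derivative through the elliptic gain alone yields
\[
|\brho|\,\Vert\varphi\Vert_{W^{2,2}(\Omega)}+\Vert\varphi\Vert_{W^{3,2}(\Omega)}\le C\,\Vert\tilde q\Vert_{W^{2,2}(\bbR^n)}\le C\,\Bigl\Vert\tfrac{\Delta\sqrt\gamma}{\sqrt\gamma}+\tfrac{\sigma}{\gamma}\Bigr\Vert_{W^{2,2}(\Omega)},
\]
in which the $W^{2,2}$ norm carries the factor $|\brho|^{-1}$ and the $W^{3,2}$ norm does not, after discarding the weights on the bounded set $\Omega$. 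This is exactly~\eqref{EQ:CGO phi}, and setting $g=\tilde u_*|_{\partial\Omega}$ completes the construction.
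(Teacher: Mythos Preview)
The paper does not supply its own proof of this theorem: it is stated as a classical result and attributed to~\cite{Bal-Notes12,SyUh-AM87,Uhlmann-IP09} without further argument. Your sketch is precisely the standard Sylvester--Uhlmann construction found in those references --- ansatz substitution, the Faddeev right inverse $G_\brho$ with its weighted $L^2_\delta$ gain of $|\brho|^{-1}$, Neumann-series solution for large $|\brho|$, and a regularity bootstrap --- so there is nothing to compare against in the paper itself, and your outline is faithful to the cited sources.
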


The function $\tilde u_*$ is called a complex geometric optics solution to~\eqref{EQ:Diff SP Liouville} and 
\begin{equation}\label{EQ:CGO Diff SP}
	u_{*}=\gamma^{-1/2}e^{\brho\cdot \bx}(1+\varphi(\bx))
\end{equation}
is a complex geometric optics solution to ~\eqref{EQ:Diff SP}. With the regularity assumption on $\gamma$ and~\eqref{EQ:CGO phi}, it is easy to verify that
\begin{equation}\label{EQ:CGO Diff SP Grad}
	\nabla u_{*} \sim u_*(\brho+O(1)).
\end{equation}

We now show, using the Newton-Kantorovich method~\cite{Ortega-AMM68}, that we can construct a CGO solution to our semilinear diffusion model that is very close, in $W^{3,2}(\Omega)$, to $u_*$ for some $\brho$.
\begin{theorem}\label{THM:CGO}
Let $\gamma \in \cC^4(\Omega)$, $\sigma \in \cC^2(\Omega)$ and $\mu \in \cC^1(\Omega)$. Let $\brho\in \bbC^{n}$ be such that $\brho\cdot\brho = 0$ and $|\brho|$ sufficiently large. Assume further that $\brho$ and $\Omega$ satisfy
\begin{equation}\label{EQ:Assump rho}
-\wt\kappa|\brho| \leq \Re(\brho\cdot \bx) \leq -\kappa|\brho|,\ \ \forall \bx\in\bar\Omega,
\end{equation}
for some $0<\kappa < \wt \kappa<\infty$. Then, there exists a function $g$ such that the solution to~\eqref{EQ:Diff TP} takes the form 
\begin{equation}\label{EQ:CGO TP}
	u(\bx)=u_*(\bx)+v(\bx),
\end{equation}
with $v$ such that
\begin{equation}
	\|v\|_{W^{3,2}(\Omega)}\le c e^{-\kappa' \kappa|\brho|},
\end{equation}
for some constant $c$ and some $\kappa'\in(1,2)$.
\end{theorem}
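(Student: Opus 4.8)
The plan is to build the desired solution as a perturbation $u = u_* + v$ of the \emph{linear} CGO solution $u_*$ furnished by Theorem~\ref{THM:CGO SP}, and to fix the illumination as $g = u_*|_{\partial\Omega}$ so that $v$ carries homogeneous Dirichlet data. Since $u_*$ already solves the linear equation~\eqref{EQ:Diff SP}, subtracting it from~\eqref{EQ:Diff TP} shows that $v$ must solve the semilinear problem
\begin{equation*}
-\nabla\cdot\gamma\nabla v + \sigma v + \mu |u_*+v|(u_*+v) = 0 \quad\text{in}\ \Omega, \qquad v = 0 \quad\text{on}\ \partial\Omega .
\end{equation*}
The whole point is that, under~\eqref{EQ:Assump rho}, $u_*$ is exponentially small: from $u_* = \gamma^{-1/2}e^{\brho\cdot\bx}(1+\varphi)$ and $|e^{\brho\cdot\bx}| = e^{\Re(\brho\cdot\bx)} \le e^{-\kappa|\brho|}$, together with the CGO estimate~\eqref{EQ:CGO phi}, the quadratic term $\mu|u_*|u_*$ is of order $e^{-2\kappa|\brho|}$. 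I would therefore treat the $v$-equation by the Newton--Kantorovich method~\cite{Ortega-AMM68} starting from the initial guess $v=0$, whose residual is precisely $\mu|u_*|u_*$.

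First I would introduce the map $F(v) = -\nabla\cdot\gamma\nabla v + \sigma v + \mu|u_*+v|(u_*+v)$ acting on $W^{3,2}(\Omega)$ functions with zero trace, so that $F(0)=\mu|u_*|u_*$, and record its linearization $F'(0) = -\nabla\cdot\gamma\nabla(\cdot) + (\sigma+2\mu|u_*|)(\cdot)$. Because the zeroth-order coefficient obeys $\sigma + 2\mu|u_*| \ge \theta > 0$ uniformly (indeed $|u_*|$ is itself uniformly small), $F'(0)$ is uniformly elliptic with a bounded inverse $W^{1,2}(\Omega)\to W^{3,2}(\Omega)$ whose norm is independent of $\brho$; the smoothness hypotheses $\gamma\in\cC^4$, $\sigma\in\cC^2$, $\mu\in\cC^1$ are exactly what buy this $W^{3,2}$ elliptic regularity. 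Moreover $z\mapsto|z|z$ is $\cC^1$ with derivative controlled by $2|z|$ and globally Lipschitz (using $\big||z_1|-|z_2|\big|\le|z_1-z_2|$), so $F'$ is Lipschitz on bounded sets with constant independent of $\brho$, which is precisely the regularity the Newton--Kantorovich theorem demands.

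Next I would estimate the residual. Using $|u_*|\sim e^{\Re(\brho\cdot\bx)}$ and that each spatial derivative of $u_*$ produces a factor $O(|\brho|)$ by~\eqref{EQ:CGO Diff SP Grad}, one obtains $\|\mu|u_*|u_*\|_{W^{1,2}(\Omega)} \le c\,|\brho|\,e^{-2\kappa|\brho|}$. Combined with the uniform bound on $F'(0)^{-1}$ and the uniform Lipschitz constant of $F'$, the Newton--Kantorovich convergence criterion is met once $|\brho|$ is sufficiently large, yielding a solution $v$ with $\|v\|_{W^{3,2}(\Omega)} \le C\,\|F'(0)^{-1}F(0)\|_{W^{3,2}(\Omega)} \le C'\,|\brho|\,e^{-2\kappa|\brho|}$. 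For any $\kappa'\in(1,2)$ the polynomial prefactor $|\brho|$ is absorbed for large $|\brho|$, giving $\|v\|_{W^{3,2}(\Omega)} \le c\,e^{-\kappa'\kappa|\brho|}$; the requirement $\kappa'>1$ simultaneously guarantees that $v$ is of strictly smaller exponential order than $u_*$, so $u=u_*+v$ genuinely inherits the CGO structure.

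The main obstacle I anticipate is keeping every Newton--Kantorovich estimate \emph{uniform} in $\brho$: one must verify that the bound on $F'(0)^{-1}$, the Lipschitz constant of $F'$, and hence the radius of the convergence ball do not degenerate as $|\brho|\to\infty$, so that the exponential smallness of the residual alone forces convergence. A secondary technical nuisance is the complex-valuedness of $u_*$, which forces one to interpret $|z|z$ and its derivative on $\bbC\cong\bbR^2$ (where the zeroth-order Jacobian remains positive-definite, preserving coercivity) and to track moduli carefully throughout the norm estimates.
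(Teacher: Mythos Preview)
Your proposal is correct and follows essentially the same approach as the paper: Newton--Kantorovich applied at the linear CGO solution $u_*$, with the residual $\mu|u_*|u_*$ controlled by $|\brho|\,e^{-2\kappa|\brho|}\le c\,e^{-\kappa'\kappa|\brho|}$, the linearization $-\nabla\cdot\gamma\nabla+(\sigma+2\mu|u_*|)$ inverted uniformly from $W^{1,2}$ to $W^{3,2}$, and Lipschitz dependence of the Fr\'echet derivative supplying the remaining Kantorovich hypothesis. The only cosmetic difference is that the paper runs the iteration on $u$ near $u_*$ while you run it on $v=u-u_*$ near $0$; you are also right to flag the complex-valuedness of $u_*$ as a technical point, which the paper's proof does not explicitly address.
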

\begin{proof}
We first observe that the assumption in~\eqref{EQ:Assump rho} allows us to bound the CGO solution $u_*$ and its gradient as
\begin{equation}\label{EQ:CGO Bound} 
	\|u_{*}\|_{L^\infty(\Omega)} \leq c e^{-\kappa|\brho|},
\end{equation}
\begin{equation}\label{EQ:CGO Bound Grad} 
	\|\nabla u_{*}\|_{L^\infty(\Omega)} \leq c e^{-\kappa|\brho|}(|\brho|+1).
\end{equation}

Let $\cP(\bx,D)$ be the differential operator defined in ~\eqref{EQ:Diff TP}. We verify that, with the assumptions on the coefficients involved, $\cP_{u_*}'$, the linearization of $\cP$ at $u_*$, $\cP_{u_*}': =-\nabla\cdot\gamma\nabla + \sigma + 2\mu|u_*|$, admits a bounded inverse as a linear map from $W^{3,2}(\Omega)$ to $W^{1,2}(\Omega)$.

We observe from the construction that $u_*$ is away from $0$. Therefore, there exists a constant $r>0$ such that the ball $B_r(u_*)$ (in the $W^{3,2}(\Omega)$ metric) contains functions that are away from $0$. Let $u_1\in B_r(u_*)$, $u_2\in B_r(u_*)$ and  $v\in W^{3,2}(\Omega)$ be given, we check that
\begin{equation}
	\cP_{u_1}' v-\cP_{u_2}' v = 2\mu(|u_1|-|u_2|)v.
\end{equation}
This leads to
\begin{equation}\label{EQ:Operator Bound A1}
	\|\cP_{u_1}' v-\cP_{u_2}' v\|_{L^2(\Omega)} = \|2\mu(|u_1|-|u_2|)v\|_{L^2(\Omega)}\le c\|u_1-u_2\|_{W^{3,2}(\Omega)} \|v\|_{W^{3,2}(\Omega)}.
\end{equation}
We can also bound $\|\nabla(\cP_{u_1}' v-\cP_{u_2}'v)\|_{L^2(\Omega)}$ as follows. We first verify that
\begin{multline}\label{EQ:Operator Bound B1}
\|\nabla(\cP_{u_1}' v-\cP_{u_2}'v)\|_{L^2(\Omega)}=2\|\nabla \big(\mu(|u_1|-|u_2|)v\big)\|_{L^2(\Omega)}\\
\leq 2\| \mu(|u_1|-|u_2|) \nabla v \|_{L^2(\Omega)} + 2\|\nabla\bigl(\mu(|u_1|-|u_2|) \bigr) v \|_{L^2(\Omega)}\\
\leq 2\|\mu(|u_1|-|u_2|)\|_{L^{\infty}(\Omega)} \|\nabla v\|_{L^{2}(\Omega)} + 2\|\nabla\bigl(\mu(|u_1|-|u_2|)\bigr) \|_{L^{\infty}(\Omega)} \|v \|_{L^{2}(\Omega)}\\
\le C \|v\|_{W^{3,2}(\Omega)} \Bigl(\Vert u_1-u_2 \Vert_{L^{\infty}(\Omega)} + \Vert \nabla\bigl(\mu(|u_1|-|u_2|\bigr)\Vert_{L^{\infty}(\Omega)}\Bigr).
\end{multline}
We then perform the expansion, using the fact that $|u_j|>0$ ($j=1,2$),
\begin{equation*}
\nabla\big(\mu(|u_1|-|u_2|)\bigr)=(|u_1|-|u_2|)\nabla\mu+\mu \Re\big(\dfrac{u_1}{|u_1|}\nabla(\bar u_1-\bar u_2)+(\dfrac{u_1}{|u_1|}-\dfrac{u_2}{|u_2|})\nabla\bar u_2 \big).
\end{equation*}
This gives us the bound
\begin{equation}\label{EQ:Operator Bound B2}
\|\nabla\big(\mu(|u_1|-|u_2|)\bigr)\|_{L^\infty(\Omega)}\le c_1 \|u_1-u_2\|_{L^\infty(\Omega)} + c_2\|\nabla(u_1-u_2)\|_{L^\infty(\Omega)}.
\end{equation}
We can then combine~\eqref{EQ:Operator Bound B1} with ~\eqref{EQ:Operator Bound B2} and use Sobolev embedding, for instance~\cite[Corollary 7.11]{GiTr-Book00}, to conclude that
\begin{equation}\label{EQ:Operator Bound A2}
\|\nabla(\cP_{u_1}' v-\cP_{u_2}'v)\|_{L^2(\Omega)}\le C \|u_1-u_2\|_{W^{3,2}(\Omega)} \|v\|_{W^{3,2}(\Omega)}.
\end{equation}

We then have, from the bounds in~\eqref{EQ:Operator Bound A1} and~\eqref{EQ:Operator Bound A2}, the following bound on the operator norm of $\cP_{u_1}'-\cP_{u_2}'$ by
\begin{equation}
	\|\cP_{u_1}'-\cP_{u_2}'\|_{\cL\big(W^{3,2}(\Omega), W^{1,2}(\Omega)\big)} \le c\|u_1-u_2\|_{W^{3,2}(\Omega)}.
\end{equation}
Let $w$ be the solution to $\cP_{u_*}'(\bx, D) w =\cP(\bx,D) u_*$ (such that $w =(\cP_{u_*}')^{-1}\cP(\bx,D) u_*$), that is,
\begin{equation}
	-\nabla\cdot\gamma\nabla w +(\sigma + 2\mu|u_*|)w = \mu|u_*|u_*,\ \ \mbox{in}\ \Omega, \qquad w=0,\ \ \mbox{on}\ \partial\Omega.
\end{equation}
It then follows from classical elliptic theory that
\begin{equation}
	\Vert (\cP_{u_*}')^{-1}\cP(\bx,D)u_{*} \Vert_{W^{3,2}(\Omega)} \le c\||u_*|u_*\|_{W^{1,2}(\Omega)}\le \tilde c e^{-2\kappa |\brho|}(|\brho|+1)\le \tilde{\tilde c} e^{-\kappa'\kappa|\brho|},
\end{equation}
for some $\kappa'\in(1,2)$, where the last step comes from the bounds in~\eqref{EQ:CGO Bound} and~\eqref{EQ:CGO Bound Grad}.

It then follows from the Newton-Kantorovich theorem~\cite{Ortega-AMM68} that, when $|\brho|$ is sufficiently large, there exists a solution to~\eqref{EQ:Diff TP} in the ball of radius $r'=\tilde {\tilde c}e^{-\kappa' \kappa |\brho|}$ centered at $u_{*}$, in $W^{3,2}(\Omega)$. The solution is of the form~\eqref{EQ:CGO TP}.
\end{proof}

Let us remark that the assumption~\eqref{EQ:Assump rho} on $\brho$ and $\Omega$ is feasible. Let $(\be_1, \be_2, \be_3)$ be a basis of $\bbR^3$. Since $\Omega$ is bounded, we can place $\Omega$ in a box in the first octant of the $(\be_1, \be_2, \be_3)$ system. Let us take $\Re(\brho)=a_1 \be_1+a_2 \be_2+a_3 \be_3$ for some $\{a_k\}_{k=1}^3$. We check that we can have $\tilde\kappa|\Re(\brho)|\le \Re(\brho) \le \kappa |\Re(\brho)|$ for some constant $\tilde\kappa$ and $\kappa$. If we further take $|\Im(\brho)|$ to be proportional to $|\Re(\brho)|$, we can have~\eqref{EQ:Assump rho}; see Section~\ref{SEC:MultiPara} for the choices of different $\brho$ in our analysis.

{\small

}


\begin{thebibliography}{10}

\bibitem{AlDiFrVe-AdM17}
{\sc G.~Alessandrini, M.~Di~Cristo, E.~Francini, and S.~Vessella}, {\em
  Stability for quantitative photo acoustic tomography with well-chosen
  illuminations}, Annali di Matematica, 196 (2017), pp.~395--406.

\bibitem{AmMa-Book07}
{\sc A.~Ambrosetti and A.~Malchiodi}, {\em Nonlinear Analysis and Semilinear
  Elliptic Problems}, Cambridge University Press, Cambridge, 2007.

\bibitem{AmBoJuKa-SIAM10}
{\sc H.~Ammari, E.~Bossy, V.~Jugnon, and H.~Kang}, {\em Mathematical modelling
  in photo-acoustic imaging of small absorbers}, SIAM Rev., 52 (2010),
  pp.~677--695.

\bibitem{AmBrJuWa-LNM12}
{\sc H.~Ammari, E.~Bretin, V.~Jugnon, and A.~Wahab}, {\em Photo-acoustic
  imaging for attenuating acoustic media}, in Mathematical Modeling in
  Biomedical Imaging II, H.~Ammari, ed., vol.~2035 of Lecture Notes in
  Mathematics, Springer-Verlag, 2012, pp.~53--80.

\bibitem{BaSe-Book11}
{\sc M.~Badiale and E.~Serra}, {\em Semilinear Elliptic Equations for
  Beginners}, Springer, London, 2011.

\bibitem{Bal-Notes12}
{\sc G.~Bal}, {\em {Introduction to Inverse Problems}}.
\newblock Department of Applied Physics and Applied Mathematics, Columbia
  University, 2012.

\bibitem{Bal-CM13}
\leavevmode\vrule height 2pt depth -1.6pt width 23pt, {\em Hybrid inverse
  problems and redundant systems of partial differential equations}, in Inverse
  Problems and Applications, P.~Stefanov, A.~Vasy, and M.~Zworski, eds.,
  vol.~615 of Contemporary Mathematics, American Mathematical Society, 2013,
  pp.~15--48.

\bibitem{BaRe-IP11}
{\sc G.~Bal and K.~Ren}, {\em Multi-source quantitative {PAT} in diffusive
  regime}, Inverse Problems, 27 (2011).
\newblock 075003.

\bibitem{BaRe-CM11}
\leavevmode\vrule height 2pt depth -1.6pt width 23pt, {\em Non-uniqueness
  result for a hybrid inverse problem}, in Tomography and Inverse Transport
  Theory, G.~Bal, D.~Finch, P.~Kuchment, J.~Schotland, P.~Stefanov, and
  G.~Uhlmann, eds., vol.~559 of Contemporary Mathematics, Amer. Math. Soc.,
  Providence, RI, 2011, pp.~29--38.

\bibitem{BaRe-IP12}
\leavevmode\vrule height 2pt depth -1.6pt width 23pt, {\em On multi-spectral
  quantitative photoacoustic tomography in diffusive regime}, Inverse Problems,
  28 (2012).
\newblock 025010.

\bibitem{BaUh-IP10}
{\sc G.~Bal and G.~Uhlmann}, {\em Inverse diffusion theory of photoacoustics},
  Inverse Problems, 26 (2010).
\newblock 085010.

\bibitem{Beard-IF11}
{\sc P.~Beard}, {\em Biomedical photoacoustic imaging}, Interface Focus, 1
  (2011), pp.~602--631.

\bibitem{BoCr-SIAM06}
{\sc F.~Bouchut and G.~Crippa}, {\em Uniqueness, renormalization and smooth
  approximations for linear transport equations}, SIAM J. Math. Anal., 38
  (2006), pp.~1316--1328.

\bibitem{BuMaHaPa-PRE07}
{\sc P.~Burgholzer, G.~J. Matt, M.~Haltmeier, and G.~Paltauf}, {\em Exact and
  approximative imaging methods for photoacoustic tomography using an arbitrary
  detection surface}, Phys. Rev. E, 75 (2007).
\newblock 046706.

\bibitem{CoCrRa-CPDE06}
{\sc F.~Colombini, G.~Crippa, and J.~Rauch}, {\em A note on two-dimensional
  transport with bounded divergence}, Comm. Partial Differential Equations, 31
  (2006), pp.~1109--1115.

\bibitem{CoArBe-IP07}
{\sc B.~T. Cox, S.~R. Arridge, and P.~C. Beard}, {\em Photoacoustic tomography
  with a limited-aperture planar sensor and a reverberant cavity}, Inverse
  Problems, 23 (2007), pp.~S95--S112.

\bibitem{CoArKoBe-AO06}
{\sc B.~T. Cox, S.~R. Arridge, K.~P. K\"ostli, and P.~C. Beard}, {\em
  Two-dimensional quantitative photoacoustic image reconstruction of absorption
  distributions in scattering media by use of a simple iterative method},
  Applied Optics, 45 (2006), pp.~1866--1875.

\bibitem{CoLaBe-SPIE09}
{\sc B.~T. Cox, J.~G. Laufer, and P.~C. Beard}, {\em The challenges for
  quantitative photoacoustic imaging}, Proc. of SPIE, 7177 (2009).
\newblock 717713.

\bibitem{DiReVa-IP15}
{\sc T.~Ding, K.~Ren, and S.~Vallelian}, {\em A one-step reconstruction
  algorithm for quantitative photoacoustic imaging}, Inverse Problems, 31
  (2015), p.~095005.

\bibitem{DiLi-AM89}
{\sc R.~J. DiPerna and P.-L. Lions}, {\em On the {Cauchy} problem for
  {Boltzmann} equations: global existence and weak stability}, Ann. Math., 130
  (1989), pp.~321--366.

\bibitem{DoNi-CPAM55}
{\sc A.~Douglis and L.~Nirenberg}, {\em Interior estimates for elliptic system
  of partial differential equations}, Comm. Pure Appl. Math., 8 (1955),
  pp.~503--508.

\bibitem{EnHaNe-Book96}
{\sc H.~W. Engl, M.~Hanke, and A.~Neubauer}, {\em {Regularization} of {Inverse}
  {Problems}}, Kluwer Academic Publishers, Dordrecht, The Netherlands, 1996.

\bibitem{Evans-Book10}
{\sc L.~C. Evans}, {\em {Partial Differential Equations}}, American
  Mathematical Society, Providence, RI, 2010.

\bibitem{FiHaRa-SIAM07}
{\sc D.~Finch, M.~Haltmeier, and Rakesh}, {\em Inversion of spherical means and
  the wave equation in even dimensions}, SIAM J. Appl. Math., 68 (2007),
  pp.~392--412.

\bibitem{FiScSc-PRE07}
{\sc A.~R. Fisher, A.~J. Schissler, and J.~C. Schotland}, {\em Photoacoustic
  effect for multiply scattered light}, Phys. Rev. E, 76 (2007).
\newblock 036604.

\bibitem{GaOsZh-LNM12}
{\sc H.~Gao, S.~Osher, and H.~Zhao}, {\em Quantitative photoacoustic
  tomography}, in Mathematical Modeling in Biomedical Imaging II: Optical,
  Ultrasound, and Opto-Acoustic Tomographies, H.~Ammari, ed., vol.~2035 of
  Lecture Notes in Mathematics, Springer, 2012, pp.~131--158.

\bibitem{GiTr-Book00}
{\sc D.~Gilbarg and N.~S. Trudinger}, {\em Elliptic Partial Differential
  Equations of Second Order}, Springer-Verlag, Berlin, 2000.

\bibitem{HaScSc-M2AS05}
{\sc M.~Haltmeier, T.~Schuster, and O.~Scherzer}, {\em Filtered backprojection
  for thermoacoustic computed tomography in spherical geometry}, Math. Methods
  Appl. Sci., 28 (2005), pp.~1919--1937.

\bibitem{HaLi-Book97}
{\sc Q.~Han and F.~Lin}, {\em Elliptic Partial Differential Equations},
  American Mathematical Society, Providence, 1997.

\bibitem{Hauray-AIHP03}
{\sc M.~Hauray}, {\em On two-dimensional {Hamiltonian} transport equations with
  $l^p_{\rm loc}$ coefficients}, Ann. IHP. Anal. Non Lin., 20 (2003),
  pp.~625--644.

\bibitem{Hristova-IP09}
{\sc Y.~Hristova}, {\em Time reversal in thermoacoustic tomography - an error
  estimate}, Inverse Problems, 25 (2009).
\newblock 055008.

\bibitem{KiSc-SIAM13}
{\sc A.~Kirsch and O.~Scherzer}, {\em Simultaneous reconstructions of
  absorption density and wave speed with photoacoustic measurements}, SIAM J.
  Appl. Math., 72 (2013), pp.~1508--1523.

\bibitem{KuKu-EJAM08}
{\sc P.~Kuchment and L.~Kunyansky}, {\em Mathematics of thermoacoustic
  tomography}, Euro. J. Appl. Math., 19 (2008), pp.~191--224.

\bibitem{KuSt-IP12}
{\sc P.~Kuchment and D.~Steinhauer}, {\em Stabilizing inverse problems by
  internal data}, Inverse Problems, 28 (2012).

\bibitem{LaLeChChSu-OE14}
{\sc Y.-H. Lai, S.-Y. Lee, C.-F. Chang, Y.-H. Cheng, and C.-K. Sun}, {\em
  Nonlinear photoacoustic microscopy via a loss modulation technique: from
  detection to imaging}, Optics Express, 22 (2014), pp.~525--536.

\bibitem{LaBoGrBuBe-OE13}
{\sc G.~Langer, K.-D. Bouchal, H.~Gr\"un, P.~Burgholzer, and T.~Berer}, {\em
  Two-photon absorption-induced photoacoustic imaging of {Rhodamine} {B} dyed
  polyethylene spheres using a femtosecond laser}, Optics Express, 21 (2013),
  pp.~22410--22422.

\bibitem{LaCoZhBe-AO10}
{\sc J.~Laufer, B.~T. Cox, E.~Zhang, and P.~Beard}, {\em Quantitative
  determination of chromophore concentrations from 2d photoacoustic images
  using a nonlinear model-based inversion scheme}, Applied Optics, 49 (2010),
  pp.~1219--1233.

\bibitem{LiWa-PMB09}
{\sc C.~Li and L.~Wang}, {\em Photoacoustic tomography and sensing in
  biomedicine}, Phys. Med. Biol., 54 (2009), pp.~R59--R97.

\bibitem{MaRe-CMS14}
{\sc A.~V. Mamonov and K.~Ren}, {\em Quantitative photoacoustic imaging in
  radiative transport regime}, Comm. Math. Sci., 12 (2014), pp.~201--234.

\bibitem{Nguyen-IPI09}
{\sc L.~V. Nguyen}, {\em A family of inversion formulas in thermoacoustic
  tomography}, Inverse Probl. Imaging, 3 (2009), pp.~649--675.

\bibitem{Ortega-AMM68}
{\sc J.~M. Ortega}, {\em The {Newton-Kantorovich} theorem}, Amer. Math.
  Monthly, 75 (1968), pp.~658--666.

\bibitem{PuCoArKaTa-IP14}
{\sc A.~Pulkkinen, B.~T. Cox, S.~R. Arridge, J.~P. Kaipio, and T.~Tarvainen},
  {\em A {Bayesian} approach to spectral quantitative photoacoustic
  tomography}, Inverse Problems, 30 (2014).
\newblock 065012.

\bibitem{ReBaHi-SIAM06}
{\sc K.~Ren, G.~Bal, and A.~H. Hielscher}, {\em Frequency domain optical
  tomography based on the equation of radiative transfer}, SIAM J. Sci.
  Comput., 28 (2006), pp.~1463--1489.

\bibitem{ReGaZh-SIAM13}
{\sc K.~Ren, H.~Gao, and H.~Zhao}, {\em A hybrid reconstruction method for
  quantitative photoacoustic imaging}, SIAM J. Imag. Sci., 6 (2013),
  pp.~32--55.

\bibitem{ReZhZh-IP15}
{\sc K.~Ren, R.~Zhang, and Y.~Zhong}, {\em Inverse transport problems in
  quantitative {PAT} for molecular imaging}, Inverse Problems, 31 (2015).
\newblock 125012.

\bibitem{SaTaCoAr-IP13}
{\sc T.~Saratoon, T.~Tarvainen, B.~T. Cox, and S.~R. Arridge}, {\em A
  gradient-based method for quantitative photoacoustic tomography using the
  radiative transfer equation}, Inverse Problems, 29 (2013).
\newblock 075006.

\bibitem{Solonnikov-JSM73}
{\sc V.~A. Solonnikov}, {\em Overdetermined elliptic boundary-value problems},
  J. Sov. Math., 1 (1973), pp.~477--512.

\bibitem{StUh-IP09}
{\sc P.~Stefanov and G.~Uhlmann}, {\em Thermoacoustic tomography with variable
  sound speed}, Inverse Problems, 25 (2009).
\newblock 075011.

\bibitem{SyUh-AM87}
{\sc J.~Sylvester and G.~Uhlmann}, {\em Global uniqueness theorem for an
  inverse boundary value problem}, Ann. Math., 125 (1987), pp.~153--169.

\bibitem{TsSoOmNt-OL14}
{\sc G.~J. Tserevelakis, D.~Soliman, M.~Omar, and V.~Ntziachristos}, {\em
  Hybrid multiphoton and optoacoustic microscope}, Opt. Lett., 39 (2014),
  pp.~1819--1822.

\bibitem{Uhlmann-IP09}
{\sc G.~Uhlmann}, {\em Electrical impedance tomography and {Calderon}'s
  problem}, Inverse Problems, 25 (2009).
\newblock 123011.

\bibitem{UrYiYaZh-JBO14}
{\sc B.~E. Urban, J.~Yi, V.~Yakovlev, and H.~F. Zhang}, {\em Investigating
  femtosecond-laser-induced two-photon photoacoustic generation}, J. Biomed.
  Opt., 19 (2014).
\newblock 085001.

\bibitem{Wang-IEEE08}
{\sc L.~V. Wang}, {\em Tutorial on photoacoustic microscopy and computed
  tomography}, IEEE J. Sel. Topics Quantum Electron., 14 (2008), pp.~171--179.

\bibitem{WiSc-IP15}
{\sc T.~Widlak and O.~Scherzer}, {\em Stability in the linearized problem of
  quantitative elastography}, Inverse Problems, 31 (2015).
\newblock 035005.

\bibitem{WiYoNoInChChPaSh-Optica14}
{\sc P.~W. Winter, A.~G. York, D.~D. Nogare, M.~Ingaramo, R.~Christensen,
  A.~Chitnis, G.~H. Patterson, and H.~Shroff}, {\em Two-photon instant
  structured illumination microscopy improves the depth penetration of
  super-resolution imaging in thick scattering samples}, Optica, 1 (2014),
  pp.~181--191.

\bibitem{YaNaTa-PPUIS10}
{\sc Y.~Yamaoka, M.~Nambu, and T.~Takamatsu}, {\em Frequency-selective
  multiphoton-excitation-induced photoacoustic microscopy ({MEPAM}) to
  visualize the cross sections of dense objects}, in Photons Plus Ultrasound:
  Imaging and Sensing, A.~A. Oraevsky and L.~V. Wang, eds., SPIE, 2010.
\newblock 75642O.

\bibitem{YaNaTa-OE11}
\leavevmode\vrule height 2pt depth -1.6pt width 23pt, {\em Fine depth
  resolution of two-photon absorption-induced photoacoustic microscopy using
  low-frequency bandpass filtering}, Optics Express, 19 (2011),
  pp.~13365--13377.

\bibitem{YaTa-PPUIS09}
{\sc Y.~Yamaoka and T.~Takamatsu}, {\em Enhancement of multiphoton
  excitation-induced photoacoustic signals by using gold nanoparticles
  surrounded by fluorescent dyes}, in Photons Plus Ultrasound: Imaging and
  Sensing, A.~A. Oraevsky and L.~V. Wang, eds., SPIE, 2009.
\newblock 71772A.

\bibitem{YeKo-OE10}
{\sc C.~S. Yelleswarapu and S.~R. Kothapalli}, {\em Nonlinear photoacoustics
  for measuring the nonlinear optical absorption coefficient}, Optics Express,
  18 (2010), pp.~9020--9025.

\bibitem{Zemp-AO10}
{\sc R.~J. Zemp}, {\em Quantitative photoacoustic tomography with multiple
  optical sources}, Applied Optics, 49 (2010), pp.~3566--3572.

\end{thebibliography}
\end{document}